\documentclass[12pt,a4paper]{article}

\usepackage[utf8]{inputenc}
\usepackage[english]{babel}
\usepackage{amsmath}
\usepackage{color}
\usepackage{amsfonts,enumitem}
\usepackage{amssymb}
\usepackage{hyperref}
\usepackage{cleveref}
\usepackage{dsfont}
\usepackage{algorithm}
\usepackage{graphicx}
\usepackage{wrapfig}
\usepackage{fancyhdr}

\newcommand{\crit}{\mathrm{crit}}
\newcommand{\conv}{\mathrm{conv}}

\newcommand*{\SET}[1]  {\ensuremath{\mathbb{#1}}}
\newcommand{\argmax}{\operatornamewithlimits{argmax}}
\newcommand{\argmin}{\operatornamewithlimits{argmin}}
\newcommand{\R}{\SET{R}}

\newcommand{\N}{\SET{N}}

\newcommand{\D}{\mathbf{D}}
\newcommand{\B}{\SET{B}}

\newcommand{\X}{\mathcal{X}}
\newcommand{\Y}{\mathcal{Y}}

\newcommand{\mcU}{\mathcal{U}}
\newcommand{\V}{\mathcal{V}}

\newcommand{\Graph}{\operatorname{Graph}}
\newcommand{\Jac}{\operatorname{Jac}}
\newcommand{\proj}{\operatorname{proj}}

\newcommand{\SR}{\operatorname{SR}}
\newcommand{\W}{\operatorname{W}}
\newcommand{\T}{\operatorname{T}}
\newcommand{\SW}{\operatorname{SW}}
\newcommand{\FR}{\operatorname{FR}}

\newcommand{\ST}{\operatorname{ST}}
\newcommand{\di}{\text{d}}

\newtheorem{theorem}{Theorem}[section]
\newtheorem{lemma}{Lemma}[section]
\newtheorem{proposition}{Proposition}[section]
\newtheorem{corollary}{Corollary}[section]

\newtheorem{definition}{Definition}[section]
\newtheorem{assumption}{Assumption}
\newtheorem{remark}{Remark}[section]

\newenvironment{proof}[1][]{\noindent {\bf Proof #1:\;}}{\hfill $\Box$}

\title{Convergence of empirical subgradients for optimal transport-based objectives}

\author{
Tam Le\thanks{Université Paris Cité, LPSM}
}

\begin{document}
\maketitle
\begin{abstract}
Optimal transport is widely used to approximate distributions, enforce distributional constraints, and model uncertainty in machine learning. In applications, transport losses are often computed from samples through tractable representations, such as one-dimensional sorting formulas or sliced Wasserstein costs, making them practical components in training pipelines. We study parameterized objectives defined by sampled transport costs and prove graphical convergence of their subdifferentials to the subdifferential of the population objective. In particular, this ensures that standard subgradient methods consistently approach stationary points of the population-level problem. We illustrate the results in several settings, including risk-averse optimization, fairness-constrained learning, and sliced Wasserstein problems. Our analysis highlights that smooth parameterizations provide a favorable interface between statistical consistency and optimization. By contrast, transport objectives with nonsmooth costs and models may exhibit unstable derivatives in the large-sample limit.
\end{abstract}
\medskip

\noindent\textbf{Keywords.}

statistical optimal transport; graphical convergence; nonsmooth and variational analysis; Clarke subdifferential; transport-based learning; sliced Wasserstein.

\medskip

\noindent\textbf{MSC 2020.}

49J52; 49Q22; 90C26; 62G20; 68T05.

\section{Introduction}

Optimal transport \cite{villani,peyre2019comput} has emerged as a central tool in computational mathematics and machine learning. It provides a principled framework for comparing probability measures, with important applications to sampling and generative modeling \cite{arjovsky2017wasserstein,generative_gulrajani2017improved}. It also offers a natural way to impose distributional constraints, for instance in fairness assessment \cite{fairness_risser2022tackling,fairness_dwork2012fairness,fairness_rychener2022metrizing}, to model uncertainty \cite{kuhn2019wasserstein,gao2023wassersteindistance}, and to encode risk-aversion in robust learning tasks \cite{risk_mehta2023stochastic,risk_bonalli2025characterization}. Efficient implementations of optimal transport are now well established, with differentiable programming frameworks available for practical use~\cite{cuturi2022optimal,flamary2021pot}.

In many learning problems, transport costs enter as losses depending on model parameters, rather than merely as distances between fixed distributions. A typical objective is
\[
    \theta \mapsto \T_c((h_\theta)_\#\mu,\nu),
\]
where \(\T_c\) denotes an optimal transport cost and \(h_\theta:\mathcal X\to\mathbb R^d\) is a parameterized model. If \(X\sim\mu\), then \((h_\theta)_\#\mu\) is the distribution of the model output \(h_\theta(X)\), which is compared to a target distribution \(\nu\). In practice, this population objective is typically approximated by sampling, or minibatching. Computing and differentiating these sampled objectives is standard in training pipelines. However, the behavior of the resulting gradient oracles is delicate, since optimal matchings introduce additional nonsmoothness.

\begin{wrapfigure}{r}{0.50\textwidth}
    \centering
    \vspace{-0.9em}
    \includegraphics[width=0.50\textwidth]{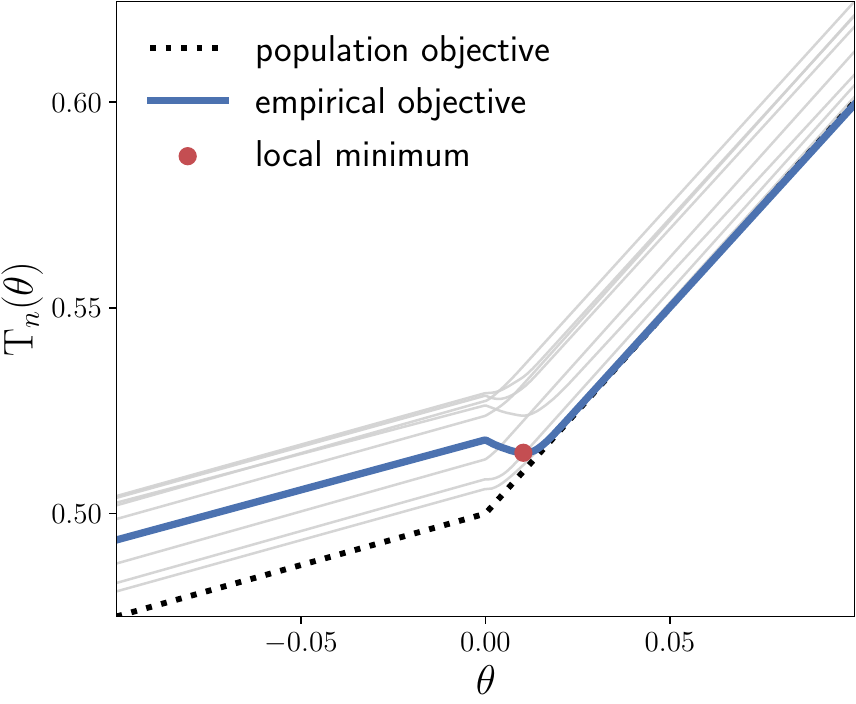}
    \vspace{-1.5em}
    \caption{\small Sample-induced local minimum}
    \label{fig:spurious-local-minimum}
    \vspace{-0.9em}
\end{wrapfigure}
One-dimensional transport costs illustrate this phenomenon well as they admit a quantile representation~\cite[Chapter 2]{Santambrogio2015}, which reduces to an explicit sorting-based formula for discrete measures. This links transport costs with ranks and quantiles and makes them particularly convenient in learning pipelines. Such tractability has been exploited for instance in image processing~\cite{image_delon2004midway}, risk measures and distributional robustness~\cite{spectral_risk_xiao2023unified,risk_bonalli2025characterization,fairness_risser2022tackling}. It is also the basis of sliced Wasserstein costs~\cite{sliced_rabin2011wasserstein}, which average tractable one-dimensional transport costs over random projections and have become a flexible tool in distributional learning tasks~\cite{sliced_deshpande2018generative,sliced_rodriguez2025learning,sliced_chapel2026differentiable,sliced_lobashev2026color,nguyen2024sliced}; see also~\cite{sliced_nadjahi2021sliced}.

This practical use of transport losses brings together two computational aspects. On the statistical side, population distributions are replaced by random finite samples, or minibatches~\cite{sampling_fatras2021minibatch,sampling_nadjahi2020statistical}, while on the optimization side, training algorithms rely on nonsmooth first-order information. Understanding this interaction is essential for providing guarantees for the use of optimal transport in applications.

\paragraph{Contributions and organization of the paper.}

We study the subdifferentials of transport costs between parameterized pushforward models. Our first main result, stated in \Cref{subsec:main_results_vanilla_transport}, shows that the subdifferentials of empirical transport costs converge to the subdifferential \eqref{eq:clarke_subdifferential} of the population limit. The convergence is understood in a \emph{graphical} sense, which is specific to set-valued maps. In particular, this yields convergence of the empirical critical sets, ensuring that subgradient-based methods consistently approach population-level stationary points \cite{benaim2012}. In \Cref{subsec:main_results_vanilla_sliced}, we extend the result to sliced costs. We present several examples in \Cref{section:2_applications}. Along the way, we present formulas that may be useful for practical implementations, including a compact expression for one-dimensional optimal transport between empirical measures with unequal sample sizes; see \Cref{lem:unbalanced_wasserstein}.

In \Cref{sec:graphical_convergence}, we recall the notions and results on graphical convergence used throughout the paper and state general limit results for set-valued maps defined through integrals against subsets of measures.  In \Cref{sec:4_convergence_subdiff_OT}, we establish our main results.  We highlight that subdifferentiating optimal transport costs can be reduced to a transport problem with parameter-dependent cost. We then establish the stability properties required to pass to the population limit and derive an envelope formula in \Cref{subsection:envelope_formula}. \Cref{subsection:proof_convergence_empirical_subdiff}-\ref{subsec:sliced_costs_proofs} establish the graphical convergence of the empirical subdifferentials for transport costs and sliced versions. Proofs of machine learning applications are found in the ending part,   \Cref{subsec:proof_applications}, where we apply our main theorems.

Our assumptions apply to continuously differentiable unit costs, e.g. \(p\)-Wasserstein costs with \(p>1\), and smooth models. Our study suggests that this smoothness is crucial for obtaining stable first-order approximations. In particular, in \Cref{sec:failure}, we showcase toy transport settings with nonsmooth unit cost and model where empirical objectives consistently develop a spurious local minimum; see \Cref{fig:spurious-local-minimum}.

\paragraph{Related works.}  
 Existing works on statistical limits for optimal transport have mostly focused on convergence of the transport value \cite{sampling_nadjahi2020statistical,fournier2015rate,sampling_fatras2021minibatch}. Less is known about the convergence of subgradients for objectives induced by parameterized models. Our results contribute to this direction and connect to several lines of research on (statistical) limits of subdifferential and differentiability of optimal transport.

\emph{Limits of subdifferentials.} The convergence of subgradients has been extensively studied in variational and nonsmooth analysis, often through graphical convergence~\cite{graphical_attouch2006convergence,graphical_levy1995partial}. This is the notion of convergence adopted in this work. It is well-suited to nonsmooth optimization, since it describes the limiting behavior of approximate set-valued first-order objects and provides a natural framework for analyzing the limit points of subgradient-based algorithms~\cite{benaim2012}. Several works at the interface of statistics and nonsmooth analysis establish graphical convergence for set-valued mappings and subdifferentials under empirical averaging~\cite{graphical_artstein1975strong,shapiro,graphical_salim2023strong}. Our work extends this line of analysis to parameterized optimal transport costs, where the nonsmoothness arises from both the transport problem and the dependence on model parameters.

A complementary part of the literature derives subdifferential convergence from convergence of the objective functions under uniform regularity assumptions
\cite{graphical_attouch2006convergence,graphical_levy1995partial,zolezzi1994convergence}. Our convergence results are related to these general theorems, and some of the final steps in our proof could likely also be recovered through semidifferentiability arguments~\cite{zolezzi1994convergence}. Our approach, however, makes explicit the transport mechanisms behind subgradient convergence, in particular the stability of optimal plans. Such stability is a classical theme in optimal transport~\cite{villani,stability_merigot2020quantitative,letrouit2024gluing}. In our setting, it appears as a parametric stability problem that we study qualitatively and relate to the question of graphical convergence. Our proofs also provide a complementary perspective by establishing optimal-transport stability with standard results in set-valued analysis~\cite{infinite,ghossoub2021continuity}.

\emph{Differentiability of transport costs.}
Nonsmooth differentiability plays a central role in our analysis, in particular through envelope-type \cite{danskin2012theory} formulas that identify the relevant first-order objects governing the large-sample limit. The differentiability of transport costs has been studied in several works, especially since the introduction of optimal-transport-based generative models~\cite{arjovsky2017wasserstein,generative_gulrajani2017improved,nonsmooth_houdard2023gradient}. In machine learning, this question has often been approached through differentiable approximations of optimal transport, such as entropic regularization and Sinkhorn iterations, which enable gradient-based optimization frameworks~\cite{sinkhorn_cuturi2014fast,sinkhorn_NEURIPS2019_d8c24ca8,wang2023sinkhorn} and come with corresponding guarantees~\cite{carlier2017convergence,sinkhorn_pauwels2023derivatives}. More recently, the nonsmoothness induced by sorting in sliced transport costs has attracted renewed attention~\cite{sliced_vauthier2025towards,sliced_chapel2026differentiable,sliced_tanguy2025properties}. We focus on exact transport costs, where, in dimension one, sorting yields efficient closed-form formulas and naturally generates nonsmooth parameterized objectives. This combination motivates a dedicated study of subgradient convergence for exact empirical transport losses. We emphasize that our assumptions are fairly general and explicit. In particular, our (sub)differentiation formula does not rely on entropic regularization, uniqueness of optimal transport solutions, or specific structural assumptions on the data distributions, beyond natural moment conditions.

\paragraph{Notations and essential definitions.}

For a function $\psi$, the $a$-sublevel set is denoted by $\{\psi \le a\}$; analogous notations are used for level and superlevel sets.  
The closed unit ball centered at the origin is written $\overline{\mathbb{B}}$, and $\mathbb{S}^{p-1} \subset \mathbb{R}^p$ denotes the $(p-1)$-dimensional unit sphere.  
The Euclidean norm is denoted by \(\|\cdot\|\); for matrices, the same notation refers to the induced operator norm. For a compact set \(A\subset\mathbb R^p\), we write $\|A\|:=\max_{y\in A}\|y\|.$

For a subset $C \subset \mathbb{R}^p$, we denote its closure, interior, and boundary by $\overline{C}$, $\operatorname{int}(C)$, and $\partial C$, respectively.  The convex hull of a set is denoted by $\operatorname{conv} C$. 
The distance from a point $x$ to $C$ is $\operatorname{dist}(x,C)$, and for compact sets $A,B \subset \mathbb{R}^p$, the \emph{excess distance} is
\begin{equation}
\label{eq:excess_distance}
    \mathbf{D}(A,B) := \sup_{a \in A} \operatorname{dist}(a,B).
\end{equation}

\emph{Subdifferential.} In this work, we use a generalized gradient for nonsmooth functions. Let $f : \R^p \to \R$ be locally Lipschitz.  
Then $f$ is differentiable on a set of full Lebesgue measure $\Delta_f \subset \R^p$, and its \emph{Clarke subdifferential} at $\theta$ is defined by the convex compact set
\begin{equation}
    \label{eq:clarke_subdifferential}
    \partial f(\theta)
:=
\operatorname{conv}\!\left\{
v \in \R^p :
\exists\, \{x_k\}_{k\in\mathbb{N}} \subset \Delta_f,\;
v = \lim_{k\to\infty} \nabla f(x_k)
\right\}.
\end{equation}
Elements of the subdifferential are called \emph{subgradients}.

Let $\Theta \subset \R^p$ be a convex compact set. Critical points of $f$ on $\Theta$ are defined as 
\begin{equation}
\label{eq:critical_points}
    \crit_\Theta f := \{x \in \R^p \ : \ 0 \in \partial f(x) + N_\Theta(x) \}
\end{equation}
where $N_\Theta(x)$ is the normal cone to $\Theta$.

\emph{Measure spaces.} For a Polish space \(S\), that is, a separable complete metric space, we denote by \(\mathcal P(S)\) the set of Borel probability measures on \(S\). Throughout the paper, all underlying measurable spaces are assumed to be Polish, endowed with their Borel \(\sigma\)-algebras. Given two measures $\mu$ and $\nu$, $\Pi(\mu,\nu)$ is the set of couplings with marginals $\mu$ and $\nu$.  $\delta_a$ denotes the Dirac mass at a point $a$. For a real-valued random variable $X$, we denote by $F_X$ its cumulative distribution function and by $F_X^{-1}$ its quantile function. When $h$ is a measurable map, we denote by $h_\# \mu$ the pushforward of $\mu$ by $h$, that is distribution of $h(X)$ under $X \sim \mu$. $\operatorname{Unif}(a,b)$ is the uniform distribution over a segment $[a,b]$.


\section{Main results}
\label{sec:main_results}

\subsection{Convergence of subgradients for differentiable unit costs} \label{subsec:main_results_vanilla_transport}

We fix distributions $\mu \in \mathcal{P}(\X)$ and $\nu \in \mathcal{P}(\Y)$. Let $f : \R^p \times \X \to \mathcal{U}$ and $g : \R^p \times \Y \to \mathcal{V}$ be parameterized families,  where $\mathcal{U}, \mathcal{V} \subset \R^d$ are output spaces.  Given a cost function $c : \mathcal{U} \times \mathcal{V} \to \R$,  we study the transport cost between the pushforward distributions,
\begin{equation}
\label{eq:transport_cost_population}
\T_c(\theta)
=
\inf_{\pi \in \Pi((f_\theta)_\# \mu, (g_\theta)_\# \nu)}
\int_{\mathcal{U} \times \mathcal{V}} c(u,v)\, \di \pi(u,v),
\end{equation}
with the notations $f_\theta := f(\theta,\cdot)$ and $g_\theta := g(\theta,\cdot)$. In practice, \(\mu\) and \(\nu\) are observed through samples. Hence let \((x_1,\ldots,x_n)\) be i.i.d. samples from \(\mu\), and \((y_1,\ldots,y_m)\) i.i.d. samples from \(\nu\). Denoting the associated empirical measures $\mu_n$ and $\nu_m$ respectively, we consider the empirical transport objective and the Clarke subgradients, whenever they are defined,
\begin{equation*}
\label{eq:transport_cost_empirical}
\T_c^{n,m}(\theta)
:=
\inf_{\pi \in \Pi((f_\theta)_\#\mu_n,\, (g_\theta)_\#\nu_m)}
\int_{\mathcal{U} \times \mathcal{V}} c(u,v)\, \di\pi(u,v), \qquad
g_{n,m} \in \partial \T_c^{n,m}(\theta).
\end{equation*}

This setting captures several practical situations. For instance, when $c$ is a distance, $g_\theta$ is fixed as the identity, and $f_\theta$ is a parameterized model, this may encode a generative modelling task, as in Wasserstein generative models \cite{arjovsky2017wasserstein} and encoding problems \cite{sebbouh2022randomized,sliced_rodriguez2025learning}. When $\mu$ and $\nu$ represent the distribution of groups of individuals, and $f$ and $g$ are predictions, the transport cost may be incorporated as an additional penalty to a learning objective to enforce fairness \cite{fairness_risser2022tackling,fairness_rychener2022metrizing}. Finally, an appropriate choice of target $\nu$ with a scalar product cost can model risk-aversion \cite{risk_mehta2023stochastic}. These concrete examples will be exposed in detail in \Cref{section:2_applications}.

We now state our main convergence result for subgradients of general transport costs. To this end, define the parameterized sample-level cost
\begin{equation}
    \label{eq:c_f_g}
    C(\theta,x,y):=c(f(\theta,x),g(\theta,y))
\end{equation}
which expresses the transport cost directly in terms of the underlying samples. We work under the following assumptions.
\begin{assumption}
\label{ass:differentiability_general_transport_cost}
\begin{enumerate}
    \item[]
    \item There exist open sets $E \subset \X$ and $F \subset \Y$ with $\mu(E) =  \nu(F)= 1$, such that $C$ and $\nabla_\theta C$ are jointly continuous on $\R^p \times E \times F$.
    \item There exist a locally bounded function $\kappa : \R^p \to \R_+$, measurable functions $\psi_\X : \X \to \R_+$, $\psi_\Y : \Y \to \R_+$, and $\eta > 0$ such that $\int_{\X} \psi_{\X}^{1+\eta} \di \mu, \int_{\Y} \psi_\Y^{1+ \eta} \di \nu<\infty$ and for all $(\theta,x,y) \in \R^p \times \X \times \Y$, $|C(\theta, x,y)| +\|\nabla_\theta C(\theta,x,y)\| \leq \kappa(\theta)\bigl(\psi_\X(x) + \psi_\Y(y)\bigr)$.
\end{enumerate}
\end{assumption}
These conditions cover, in particular, \(q\)-Wasserstein costs with \(q>1\), and continuously differentiable models. The continuity requirement on \(\nabla_\theta C\) is also compatible with discrete distributions: it only needs to hold pointwise in the discrete variables. More generally, the assumption allows for mixed continuous-discrete settings. Note that the assumptions do not cover Wasserstein 1 distances and nonsmooth models. These excluded cases are examined through pathological examples in \Cref{sec:failure}, suggesting that the assumptions are close to tight.

The starting point of our analysis is a differentiation formula for subgradients of transport costs.

\begin{lemma}[Parameterized cost differentiation] Under \Cref{ass:differentiability_general_transport_cost}, where $C$ is given by \eqref{eq:c_f_g}, for all $\theta \in \R^p$, 
\begin{equation*}
    \partial \T_c(\theta) = \left\{ \int_{\X \times \Y} \nabla_\theta C(\theta,x,y) \di \gamma(x,y) \ : \  \gamma \in \argmin_{\gamma' \in \Pi(\mu,\nu)} \int_{\X \times \Y} C(\theta,x,y) \di \gamma'(x,y) \right\}
\end{equation*}
\end{lemma}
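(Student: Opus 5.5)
First, reduce the problem to a parametric optimal transport problem on the fixed sample spaces. Any $\pi \in \Pi((f_\theta)_\#\mu,(g_\theta)_\#\nu)$ is the image of some $\gamma\in\Pi(\mu,\nu)$ under $(x,y)\mapsto(f_\theta(x),g_\theta(y))$. This follows by gluing: disintegrate $\mu$ over $f_\theta$ and $\nu$ over $g_\theta$. Conversely, the image of any such $\gamma$ lies in $\Pi((f_\theta)_\#\mu,(g_\theta)_\#\nu)$. Hence
\[
\T_c(\theta)=\min_{\gamma\in\Pi(\mu,\nu)} \int C(\theta,x,y)\,\di\gamma(x,y)=:\min_{\gamma\in\Pi(\mu,\nu)}\Phi(\theta,\gamma).
\]
So $\T_c$ is a marginal (min-type) function over the fixed set $\Pi(\mu,\nu)$, which is weakly compact by tightness of the marginals.

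\textbf{Regularity.} Next I establish the properties of $\Phi$ and the solution map.
\begin{enumerate}
\item For each $\gamma$, $\theta\mapsto\Phi(\theta,\gamma)$ is $C^1$ with $\nabla_\theta\Phi=\int\nabla_\theta C\,\di\gamma$. This follows by dominated convergence using the bound $\kappa(\theta)(\psi_\X+\psi_\Y)$, which is uniform over $\Pi(\mu,\nu)$ since its integral depends only on the marginals.
\item $(\theta,\gamma)\mapsto\Phi(\theta,\gamma)$ and $(\theta,\gamma)\mapsto\nabla_\theta\Phi(\theta,\gamma)$ are jointly continuous under weak convergence of $\gamma$. Here the integrands are continuous on $\R^p\times E\times F$, a set of full $\gamma$-measure. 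The growth is dominated by $\psi_\X+\psi_\Y$ with $(1+\eta)$-moments, which gives uniform integrability over $\Pi(\mu,\nu)$. This is where $\eta>0$ is used.
\item A subtlety: weak convergence with continuity only on an open full-measure set $E\times F$ requires a Portmanteau/continuous-mapping argument. It is valid because every $\gamma\in\Pi(\mu,\nu)$ gives full mass to $E\times F$.
\end{enumerate}
Consequently $\T_c$ is locally Lipschitz, since $\nabla_\theta\Phi$ is locally bounded uniformly in $\gamma$. Moreover, by Berge's maximum theorem the argmin map $S(\theta)$ is nonempty, compact, and outer semicontinuous.

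\textbf{Danskin-type formula.} The core step is the envelope formula for a minimum of $C^1$ functions over a compact index set. I would prove two inclusions.

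For $\supseteq$: first show the one-sided directional derivative $\T_c'(\theta;d)=\min_{\gamma\in S(\theta)}\langle\nabla_\theta\Phi(\theta,\gamma),d\rangle$. This is the standard Danskin argument, using compactness, joint continuity of $\nabla_\theta\Phi$, and outer semicontinuity of $S$.

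Then relate this to the Clarke subdifferential. This is the main obstacle, because a min of smooth functions is not Clarke regular. In general one only gets $\partial\T_c(\theta)\subseteq \conv\{\nabla_\theta\Phi(\theta,\gamma):\gamma\in S(\theta)\}$.

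The inclusion $\subseteq$ goes as follows. Take $\theta_k\to\theta$ with $\T_c$ differentiable at $\theta_k$. At such points the directional derivative formula forces $\nabla\T_c(\theta_k)=\nabla_\theta\Phi(\theta_k,\gamma_k)$ for any $\gamma_k\in S(\theta_k)$, since a linear function equals a min of linear functions over the set. Extract $\gamma_k\to\gamma\in S(\theta)$ using compactness and outer semicontinuity, then pass to the limit by joint continuity.

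The stated set has no explicit convex hull. However, $S(\theta)$ is convex, being the argmin of a linear functional over the convex set $\Pi(\mu,\nu)$, and $\gamma\mapsto\int\nabla_\theta C\,\di\gamma$ is linear. So the set $G(\theta):=\{\int\nabla_\theta C\,\di\gamma:\gamma\in S(\theta)\}$ is already convex and compact, and $\partial\T_c(\theta)\subseteq G(\theta)$.

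For the reverse inclusion $G(\theta)\subseteq\partial\T_c(\theta)$, I would exploit the minimization structure. For $\gamma\in S(\theta)$, the function $\Phi(\cdot,\gamma)-\T_c$ is nonnegative and vanishes at $\theta$, so $\T_c$ is touched from above by a $C^1$ function there. This yields $\nabla_\theta\Phi(\theta,\gamma)\in\partial^+\T_c(\theta)\subseteq\partial\T_c(\theta)$, using that the Fréchet superdifferential is contained in the Clarke subdifferential for locally Lipschitz functions, since $\partial(-f)=-\partial f$. Convexity of $\partial\T_c(\theta)$ then closes the argument.
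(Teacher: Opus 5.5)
Your proof is correct. Its skeleton matches the paper's: the lift to $\Pi(\mu,\nu)$ (\Cref{lem:lift_transport}), weak compactness of $\Pi(\mu,\nu)$, and joint continuity of $(\theta,\gamma)\mapsto\int C\,\di\gamma$ and $\int\nabla_\theta C\,\di\gamma$, obtained from continuity on the full-measure set $E\times F$ plus uniform integrability from the $(1+\eta)$-moments (\Cref{lem:semi_continuity_singleton}, \Cref{prop:graph_convergence_parameterized_integrals}).

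The genuine difference is the envelope step. You prove it by hand. The paper instead invokes Clarke's theorem for max functions (\Cref{prop:envelope}) on $-\T_c(\theta)=\max_{\gamma\in\Pi(\mu,\nu)}\bigl(-\int C(\theta,x,y)\,\di\gamma(x,y)\bigr)$ and transfers the result through $\partial(-F)=-\partial F$. That transfer shows that the lack of Clarke regularity of a min, which you flag as the main obstacle, does not prevent equality. Your touching-from-above argument for $G(\theta)\subseteq\partial\T_c(\theta)$ is the elementary version of exactly this sign symmetry.

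Your $\subseteq$ direction is heavier than needed. The same touching-from-above inequality, used with $d$ and $-d$ at a differentiability point $\theta_k$, already gives $\nabla\T_c(\theta_k)=\nabla_\theta\Phi(\theta_k,\gamma_k)$ for every $\gamma_k\in S(\theta_k)$. So the lower half of Danskin's formula is unnecessary; outer semicontinuity of $S$ and joint continuity are only used to pass to the limit. Also, your label ``For $\supseteq$'' on the directional-derivative step should read $\subseteq$.

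What your route buys is self-containedness plus one real gain in rigor. Clarke's theorem delivers a closed convex hull, and the paper drops it from the statement without comment. Your observation that $S(\theta)$ is convex and compact, and that $\gamma\mapsto\int\nabla_\theta C\,\di\gamma$ is linear and continuous, is precisely what justifies writing the formula without $\overline{\conv}$. The paper's route is shorter, at the price of relying on a cited theorem whose hypotheses (separability, sequential compactness, joint continuity of the gradient, uniform Lipschitz bound) it verifies.
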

This is established in \Cref{prop:envelope_ot}. Subgradients with respect to the parameter are thus obtained by evaluating the gradient of the composite cost function $C$ in \eqref{eq:c_f_g} against sample-level optimal plans. Compared to existing studies \cite{arjovsky2017wasserstein,nonsmooth_houdard2023gradient}, this formula holds under general conditions and does not require regularization or technical assumptions to ensure differentiability.

Note that the formula also applies when $\mu$ and $\nu$ are replaced by empirical approximations. Convergence of the subgradient is then obtained by passing to the graphical limit in this formula. This is done in \Cref{sec:4_convergence_subdiff_OT} to obtain our main result.

\begin{theorem}[Convergence of empirical subgradients] \label{th:limit_subdiff} Let $\Theta \subset \R^p$ be a compact subset. Let $C$ be defined as \eqref{eq:c_f_g}. Under \Cref{ass:differentiability_general_transport_cost}, almost surely in the samples $(x_i)_{i \in \N}, (y_j)_{j \in \N}$, for any $\varepsilon >0$, there exists  $N_\varepsilon \geq 0$ such that for all $n,m \geq N_\varepsilon$,  $\partial \T_c^{n,m}(\theta) \subset \partial \T_c(\theta + \varepsilon \Bar{\B}) + \varepsilon \Bar{\B}$  for all $\theta \in \Theta$.
\end{theorem}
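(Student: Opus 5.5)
The argument is by contradiction, passing to the limit in the differentiation formula of the parameterized cost differentiation lemma (\Cref{prop:envelope_ot}), which holds for the empirical measures as well as for $\mu,\nu$. For the empirical problem, every $g_{n,m}\in\partial\T_c^{n,m}(\theta)$ can be written as
\begin{equation*}
g_{n,m}=\int \nabla_\theta C(\theta,x,y)\,\di\gamma_{n,m}(x,y),
\end{equation*}
where $\gamma_{n,m}$ is optimal for $C(\theta,\cdot,\cdot)$ in $\Pi(\mu_n,\nu_m)$. Fix a probability-one event on which $\mu_n\to\mu$ and $\nu_m\to\nu$ weakly, and on which the empirical integrals of $\psi_\X^{1+\eta}$ and $\psi_\Y^{1+\eta}$ converge to their population values. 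This is countably many applications of the strong law of large numbers, together with Varadarajan's theorem on Polish spaces. On this event, suppose the conclusion fails for some $\varepsilon>0$. Then there are $n_k,m_k\to\infty$, $\theta_k\in\Theta$ and $g_k\in\partial\T_c^{n_k,m_k}(\theta_k)$ with $\operatorname{dist}\bigl(g_k,\partial\T_c(\theta_k+\varepsilon\bar{\B})\bigr)>\varepsilon$. By compactness of $\Theta$ we may assume $\theta_k\to\bar\theta$.

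The first step is compactness. The marginals of $\gamma_k$ converge weakly, so $\{\gamma_k\}$ is tight and, after extraction, $\gamma_k\to\bar\gamma\in\Pi(\mu,\nu)$ weakly. Since $\|\nabla_\theta C(\theta_k,x,y)\|\le \sup_\Theta\kappa\,(\psi_\X(x)+\psi_\Y(y))$, the $g_k$ are bounded because the empirical first moments of $\psi$ converge. The $(1+\eta)$-moments give uniform integrability of $\psi_\X(x)+\psi_\Y(y)$ under $\gamma_k$, because their integrals depend only on the marginals.

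The second step passes to the limit in the integrals. The integrands $\nabla_\theta C(\theta_k,\cdot,\cdot)$ converge to $\nabla_\theta C(\bar\theta,\cdot,\cdot)$ locally uniformly on $E\times F$ by joint continuity, and $\bar\gamma(E\times F)=1$. An extended continuous-mapping argument with uniform integrability then gives $g_k\to\int\nabla_\theta C(\bar\theta,x,y)\,\di\bar\gamma$. One subtlety is that the $\gamma_k$ are supported on the sample points, which lie in $E\times F$ almost surely; another is that $E$ and $F$ are open but not closed. I would handle both by truncating to compacts $K\subset E\times F$ with $\bar\gamma(K)$ close to $1$ and using the portmanteau theorem on open neighbourhoods. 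Applying the same argument to $C$ itself shows $\int C(\theta_k,\cdot,\cdot)\,\di\gamma_k\to\int C(\bar\theta,\cdot,\cdot)\,\di\bar\gamma$.

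The third and hardest step is the stability of optimality: we need $\bar\gamma\in\argmin_{\Pi(\mu,\nu)}\int C(\bar\theta,\cdot,\cdot)$. I would argue by the standard route. Optimal plans for continuous costs are $C(\theta_k)$-cyclically monotone. Cyclical monotonicity passes to the support of the weak limit because $C(\theta_k,\cdot,\cdot)\to C(\bar\theta,\cdot,\cdot)$ locally uniformly on $E\times F$. Cyclical monotonicity plus the integrability bound then implies optimality, as in \cite[Thm 5.20]{villani}. An alternative route is to compare values directly: take an optimal $\gamma^*$ for $(\mu,\nu)$, build couplings in $\Pi(\mu_{n_k},\nu_{m_k})$ converging to it by gluing, and conclude via lower/upper semicontinuity of the value. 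Since only $\mu(E)=\nu(F)=1$ is available and not closed supports, the cyclical-monotonicity argument must be localized to $E\times F$; this is where I expect the care to go.

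Conclusion: $\bar g:=\lim g_k\in\partial\T_c(\bar\theta)$ by the lemma. For large $k$, $\|\theta_k-\bar\theta\|\le\varepsilon$, so $\bar\theta\in\theta_k+\varepsilon\bar{\B}$, and $\|g_k-\bar g\|<\varepsilon$. This contradicts the choice of $g_k$.
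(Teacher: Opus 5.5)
Your argument is correct and follows the paper's skeleton: the envelope formula (\Cref{prop:envelope_ot}) at the empirical and population levels, stability of optimal plans under joint perturbation of $(\theta,\mu,\nu)$, and passage to the limit in $\int\nabla_\theta C\,\di\gamma$ using the uniform integrability given by the $(1+\eta)$-moments. The paper goes through the sequential characterization of graph convergence (\Cref{lem:graph_convergence_pointwise_characterization}) and the $\delta_{\theta_k}\otimes\gamma_k$ device of \Cref{prop:graph_convergence_parameterized_integrals}. You instead argue by contradiction with the extended continuous mapping theorem. The two are interchangeable, and yours yields the stated inclusion directly.

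The genuine difference is the stability step. The paper deliberately avoids cyclical monotonicity. It gets optimality of limit plans from Berge's theorem (\Cref{prop:max_theorem}) combined with continuity of $(\mu,\nu)\rightrightarrows\Pi(\mu,\nu)$ (\Cref{prop:continuity_couplings}); see \Cref{prop:parametric_stability}. Your Step~1 is the outer semicontinuity of $\Pi$ that feeds Berge's theorem there, written out by hand. Your gluing route (recovery couplings in $\Pi(\mu_{n_k},\nu_{m_k})$ converging to an optimal $\gamma^*$, then comparing values via Step~2) is the inner semicontinuity, and it is complete as stated.

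Your cyclical-monotonicity route is viable but costs more than you indicate. Monotonicity does pass to $\operatorname{supp}\bar\gamma\cap(E\times F)$, which has full $\bar\gamma$-mass. This works because $E\times F$ is a product of open sets, so the swapped pairs $(x_i,y_{i+1})$ stay where $C$ is continuous. However, the classical statements in \cite{villani} that turn this into optimality need more than you have here. They require a cost that is lower semicontinuous on all of $\X\times\Y$ with semicontinuous integrable lower bounds, and, for the stability theorem, uniformly converging costs. You would need a ``cyclically monotone $\Rightarrow$ optimal'' criterion for costs that are only continuous on $E\times F$ (or merely measurable) with merely measurable integrable bounds. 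Such results exist but are a much heavier input. This is exactly the overhead that the paper's set-valued route, and your gluing alternative, avoid.
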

See \Cref{subsection:proof_convergence_empirical_subdiff} for the proof. Equivalently, the result can be expressed as convergence, in excess distance \eqref{eq:excess_distance}, of the corresponding graphs \eqref{eq:graph}; see \Cref{sec:graphical_convergence} for the relevant notions concerning graphs of set-valued maps.

\subsection{Extension to sliced costs}
\label{subsec:main_results_vanilla_sliced}
We extend the preceding approximation result to \emph{sliced transport costs}. To this end, set $\mathcal{U} = \mathcal{V} = \R^m$ and let $\rho$ be a probability distribution supported on a compact set $\mathcal{K} \subset \R^m$, typically chosen as the unit sphere. Given a cost function $c : \R \times \R \to \R$ and a projection direction $\phi \sim \rho$, we define the projected transport cost as
\begin{equation}
\label{eq:main_little_transport_c}
t_c(\theta,\phi)
=
\inf_{\pi \in \Pi((\phi^\top f_\theta)_\# \mu, (\phi^\top g_\theta)_\# \nu)}
\int_{\R \times \R} c(u,v)\di \pi(u,v),
\end{equation}
where $\phi^\top \cdot$ is the scalar product against $\phi$. The sliced transport cost between the parameterized models $f$ and $g$ is obtained by integrating over $\phi \sim \rho$.
\begin{equation*}
\ST_c(\theta)
:=
\int_{\mathcal{K}} t_c(\theta,\phi)\di \rho(\phi).
\end{equation*}

For i.i.d. samples $\phi_1,\ldots,\phi_k \sim \rho$, $x_1,\ldots,x_n \sim \mu$, and $y_1,\ldots,y_m \sim \nu$, with associated empirical measures $\rho_k$, $\mu_n$, and $\nu_m$, we consider the empirical counterpart
\begin{equation*}
\ST_c^{n,m,k}(\theta)
:=
\int_{\mathcal{K}}
t_c\left((\phi^\top f_\theta)_\# \mu_n, (\phi^\top g_\theta)_\# \nu_m\right)
\di \rho_k(\phi).
\end{equation*}
 Under suitable conditions on the associated sample-level cost \eqref{eq:sliced_parameterized_cost}, we obtain convergence of the empirical subgradients.

\begin{equation}
\label{eq:sliced_parameterized_cost}
    C: (\theta,x,y, \phi) \mapsto c(\phi^\top f(\theta,x),  \phi^\top g(\theta,y) )  
\end{equation}

\begin{assumption}[Differentiable parameterized sliced cost]
\label{assumption:main_differentiability_parameterized_sliced_cost}
\begin{enumerate}
\item[]
    \item (Regularity off null sets) 
    There exist open subsets $E \subset \X$ and $F \subset \Y$ with $\mu(E) = \nu(F) = 1$ such that $C$ and $ \nabla_\theta C$ are  continuous on $\R^p \times E \times F \times \mathcal K$.
    \item (Uniform integrability)  
    There exist a locally bounded function $\kappa : \R^p \to \R_+$, measurable functions $\psi_\X : \X \to \R_+$, $\psi_\Y : \Y \to \R_+$, and $\eta > 0$ such that $\int_\X \psi_\X^{1+\eta} \, \di \mu$, $\int_\Y \psi_\Y^{1+\eta} \, \di \nu < \infty$
    and for all $(\theta,x,y,\phi) \in \R^p \times \X \times \Y \times \mathcal K$, $$|C(\theta,x,y,\phi)| + \|\nabla_\theta C(\theta,x,y,\phi)\|
        \le \kappa(\theta)\bigl(\psi_\X(x) + \psi_\Y(y)\bigr).$$
\end{enumerate}
\end{assumption}

\begin{theorem}[Convergence of empirical subgradients for sliced cost] \label{th:limit_subdiff_sliced} Let $\Theta \subset \R^p$ be a compact subset. Let $C$ be defined as \eqref{eq:sliced_parameterized_cost}. Under \Cref{assumption:main_differentiability_parameterized_sliced_cost}, almost surely in the samples $(x_i)_{i \in \N}, (y_j)_{j \in \N}, (\phi_l)_{l \in \N}$, for any $\varepsilon >0$, there exists $N_\varepsilon \geq 0$ such that for any $n,m \geq N_\varepsilon$, there exists $K_{n,m} \geq 0$ such that for any $k \geq K_{n,m}$, $\partial \ST_c^{n,m,k}(\theta) \subset \partial \ST_c(\theta + \varepsilon \Bar{\B}) + \varepsilon \Bar{\B}$ for all $\theta \in \Theta.$
\end{theorem}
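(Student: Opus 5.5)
The plan is to reduce the sliced statement to an envelope formula and a graphical limit taken in two stages. The first stage sends $k\to\infty$ for fixed samples $\mu_n,\nu_m$. The second stage sends $n,m\to\infty$, uniformly in $\phi\in\mathcal K$.

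\textbf{Subdifferential formula.} First, for each fixed $\phi$, the projected cost $t_c(\theta,\phi)$ is a transport cost with sample-level cost $C(\cdot,\cdot,\cdot,\phi)$. Assumption~\ref{assumption:main_differentiability_parameterized_sliced_cost} gives Assumption~\ref{ass:differentiability_general_transport_cost} uniformly in $\phi\in\mathcal K$. The parameterized cost differentiation lemma (\Cref{prop:envelope_ot}) then yields
\[
\partial_\theta t_c(\theta,\phi)=\Bigl\{\int \nabla_\theta C(\theta,x,y,\phi)\,\di\gamma : \gamma\in\argmin_{\Pi(\mu,\nu)}\int C(\theta,\cdot,\cdot,\phi)\Bigr\}.
\]
Next I would show that $(\theta,\phi)\mapsto \partial_\theta t_c(\theta,\phi)$ is upper semicontinuous with compact convex values and locally bounded. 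This follows from the stability of optimal plans: weak compactness of $\Pi(\mu,\nu)$, closedness of the argmin under joint continuity, and uniform integrability from the $1+\eta$ moment bound. The same holds with $\mu_n,\nu_m$ in place of $\mu,\nu$. Since $t_c$ is locally Lipschitz in $\theta$ uniformly in $\phi$, the Clarke subdifferential of an integral is contained in the integral of subdifferentials (Clarke's theorem). This gives
\[
\partial\ST_c^{n,m,k}(\theta)\subset \int \partial_\theta t_c^{n,m}(\theta,\phi)\,\di\rho_k(\phi).
\]
The analogous inclusion holds for the population objective. For the population we also need the reverse direction in a weak sense. It suffices to compare with $\partial\ST_c$ on an $\varepsilon$-enlargement in $\theta$. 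I would use the general results of \Cref{sec:graphical_convergence} on set-valued maps defined through integrals against measures, which presumably give $\int \partial_\theta t_c(\theta',\phi)\,\di\rho \subset \partial\ST_c(\theta+\varepsilon\bar{\B})+\varepsilon\bar{\B}$ up to conservative-field arguments. If equality fails, I would instead compare with $\partial\ST_c$ through the graphical-limit statement used for Theorem~\ref{th:limit_subdiff}.

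\textbf{Inner limit in $k$ (fixed $n,m$).} The empirical map $\phi\mapsto\partial_\theta t^{n,m}_c(\theta,\phi)$ is usc, compact-valued and bounded on $\Theta\times\mathcal K$. This holds because finitely many samples lie in $E\times F$ almost surely, so continuity applies. A strong law for set-valued integrands on the compact $\mathcal K$, in the Artstein / graphical form recalled in \Cref{sec:graphical_convergence}, gives uniform-in-$\theta$ excess convergence of $\int \partial_\theta t^{n,m}_c\,\di\rho_k$ into $\int \partial_\theta t^{n,m}_c(\theta+\varepsilon\bar\B,\phi)\,\di\rho+\varepsilon\bar\B$ as $k\to\infty$. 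This produces $K_{n,m}$. Almost-sure quantification over all $(n,m)$ is fine because there are countably many pairs.

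\textbf{Outer limit in $n,m$.} The task here is to show that, almost surely, for large $n,m$,
\[
\partial_\theta t^{n,m}_c(\theta,\phi)\subset \partial_\theta t_c(\theta+\varepsilon\bar\B,\phi)+\varepsilon\bar\B
\]
uniformly in $(\theta,\phi)\in\Theta\times\mathcal K$. This is exactly the argument of Theorem~\ref{th:limit_subdiff}, now with $\phi$ treated as an extra compact parameter. One proceeds by contradiction: take sequences $(\theta_n,\phi_n)$ and plans $\gamma_n$ optimal for the empirical problem, extract a convergent subsequence with $\gamma_n\rightharpoonup\gamma$, use stability of optimality to conclude that $\gamma$ is optimal for $(\theta,\phi)$, and pass to the limit in $\int\nabla_\theta C\,\di\gamma_n$ by uniform integrability. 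That integrability comes from $\psi^{1+\eta}$ and the strong law for $\mu_n,\nu_m$. Integrating against $\rho$ and using convexity of the target sets yields the inclusion in $\int\partial_\theta t_c(\theta+\varepsilon\bar\B,\phi)\,\di\rho$. Combining this with the first step and the inner limit closes the argument.

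\textbf{Expected obstacle.} I expect the main obstacle to be identifying $\int\partial_\theta t_c\,\di\rho$ with (an enlargement of) the Clarke subdifferential $\partial\ST_c$. Clarke's integral rule gives only an inclusion in the wrong direction for the target, since $\partial\ST_c\subset\int\partial_\theta t_c$. I would first try to exploit the structure of the envelope formula. Each $t_c(\cdot,\phi)$ is a minimum of $C^1$ functions (marginal functions over plans), so it is lower-$C^1$ or at least Clarke regular for $-t_c$. For such functions, the integral rule holds with equality, via regularity of $-t_c$ and the equality case of Clarke's theorem. If that fails, I would fall back on the graphical-convergence machinery of \Cref{sec:graphical_convergence}, which is designed to handle precisely such integrals of set-valued maps.
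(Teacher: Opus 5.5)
Your architecture is the paper's. The slice-by-slice envelope formula of \Cref{prop:envelope_ot}, the set-valued law of large numbers in $k$ for frozen $(n,m)$, and the key identification are all the same. That identification is $\int_{\mathcal K}\partial_\theta t_c(\theta,\phi)\,\di\rho(\phi)=\partial\ST_c(\theta)$, obtained from Clarke regularity of $-t_c$ (a maximum of $C^1$ functions, as in the envelope theorem) and the equality case of Clarke's integral rule. A few remarks on this part:
\begin{itemize}
\item Your first option for the ``expected obstacle'' is exactly what the paper does, so drop the fallbacks. Nothing in \Cref{sec:graphical_convergence} identifies Clarke subdifferentials of integrals.
\item It is $-t_c$, not $t_c$, that is lower-$C^1$.
\item The graphical law of large numbers is not recalled in \Cref{sec:graphical_convergence}; the paper cites Norkin--Wets for it.
\item Using only the sum-rule inclusion at the empirical level is fine.
\end{itemize}

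The step that does not go through as written is the outer limit. You claim, uniformly in $(\theta,\phi)$, that $\partial_\theta t_c^{n,m}(\theta,\phi)\subset\partial_\theta t_c(\theta+\varepsilon\bar{\B},\phi)+\varepsilon\bar{\B}$. Your contradiction argument extracts $\phi_n\to\phi$, so the limit vector lands in $\partial_\theta t_c(\theta,\phi)$ at the limit direction $\phi$. That does not contradict $v_n\notin\partial_\theta t_c(\theta_n+\varepsilon\bar{\B},\phi_n)+\varepsilon\bar{\B}$. The map $\phi\mapsto\partial_\theta t_c(\theta,\phi)$ is only outer semicontinuous, because optimal sets can shrink as $\phi$ moves. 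The same-$\phi$ inclusion would need a graphical inner semicontinuity in $\phi$ that you do not prove. What the argument actually gives is an inclusion with enlargements in both $\theta$ and $\phi$.

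There is a second issue. After integrating, you hold sets like $\int_{\mathcal K}\partial_\theta t_c(\theta+\varepsilon\bar{\B},\phi+\varepsilon\bar{\B})\,\di\rho(\phi)$; your $k$-step produces a similar inner $\theta$-enlargement. In these sets the perturbed point varies with $\phi$, and the integrands are not convex. Such a set is not of the form $\partial\ST_c(\theta')$, so ``combining'' the steps does not close the argument.

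The missing device is the one the paper uses, and it repairs both issues:
\begin{itemize}
\item For each fixed $\phi$, outer semicontinuity gives pointwise convergence of the excess to $0$.
\item That excess is bounded by a constant almost surely, from $\sup_\Theta\kappa$ and the almost surely bounded empirical moments.
\item \Cref{lem:interchange_excess_dist_integral}, applied after convexification, together with dominated convergence, transfers this to the integrals.
\end{itemize}
The cleanest version is the paper's sequential one. Fix $\phi$, take $\theta_l\to\theta$, and show $\D(\partial_\theta t_c^{n_l,m_l}(\theta_l,\phi),\partial_\theta t_c(\theta,\phi))\to0$ via \Cref{prop:graphical_convergence_general} and \Cref{lem:graph_convergence_pointwise_characterization}. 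Then integrate using the Jensen excess lemma and dominated convergence, and conclude with \Cref{lem:graph_convergence_pointwise_characterization}. This never creates $\phi$-dependent enlargements. Alternatively, keep your uniform formulation with joint $(\theta,\phi)$-enlargements, and remove them by the same device plus a compactness argument on $\Theta$.
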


The proof is found in \Cref{subsec:sliced_costs_proofs}.

\subsection{Subgradient computation and convergence in transport-based learning}
\label{section:2_applications}

In this part, we present applications of our general convergence theorems, drawing on existing works: a risk-averse optimization problem, a fair learning framework, and a sliced Wasserstein learning problem. The proofs are deferred to \Cref{subsec:proof_applications}.

In each example, an empirical objective $\hat{F}$ is used as an empirical approximation of a population objective $F$. Our approximation results imply that stationary solutions of the finite-sample problem are reliable approximations of stationary solutions of the population problem. This critical-point stability complements existing convergence guarantees for projected subgradient methods and related nonsmooth optimization algorithms
\cite{norkin1980,Davis2020,bolte2019conservative}. For example, consider the projected iteration
\begin{equation*}
    w_{k+1}
    =
    \proj_\Theta\bigl(w_k-\eta_k \hat{G}(w_k)\bigr),
    \qquad
    \hat{G}(w_k)\in\partial \hat{F}(w_k).
\end{equation*}
where $\eta_k >0$ are stepsizes. For a fixed dataset, existing optimization theory describes the convergence of this subgradient algorithm toward critical points of the nonconvex and nonsmooth empirical objective $\hat{F}$, in the sense of \eqref{eq:critical_points}. Our results address the complementary statistical question: when the sample size is large, these empirical critical points are close to the critical set of the population objective $F$.

\subsubsection{Spectral risk minimization}
\label{subsection:spectral_risk}
We consider a \emph{spectral risk}-based learning framework 
\cite{risk_mehta2023stochastic,spectral_risk_laguel2022superquantile,spectral_risk_xiao2023unified}. 
Let $\ell : \Theta \times \X \to \mathbb{R}$ be a loss function, and let $\mu$ be a data distribution on $\X$. 
Given a weighting function $w : [0,1] \to \mathbb{R}_+$, we define the population spectral risk associated with the pushforward distribution $(\ell_\theta)_{\#}\mu$ by
\begin{equation}
    \label{eq:SR_population}
    \SR(\theta)
    =
    \int_{0}^1
    w(s)\,
    F^{-1}_{(\ell_\theta)_{\#}\mu}(s)
    \,\di s,
\end{equation}
where $\ell_\theta := \ell(\theta,\cdot)$ and $F^{-1}$ is the quantile function. Given i.i.d. samples $x_1,\ldots,x_n \sim \mu$, the corresponding empirical spectral risk is
\begin{equation}
    \label{eq:SR_empirical}
    \SR_n(\theta)
    =
    \frac{1}{n}
    \sum_{i=1}^n
    w(i/n)\,
    \ell_{(i)}(\theta),
\end{equation}
where, for each $\theta \in \Theta$, $\ell_{(1)}(\theta)
\leq \cdots \leq
\ell_{(n)}(\theta)$
denote the ordered values of $\ell(\theta,x_i)$, $i = 1, \ldots,n$. Note that the indices $(i)$ depend on $\theta$. In the presence of ties, we fix an arbitrary ordering convention. When $w$ is non-decreasing, the empirical objective \eqref{eq:SR_empirical} assigns greater weight to larger loss values, thereby promoting risk-averse minimization. Particular choices are $w(s) = \frac{\mathds{1}_{[1 - \alpha,1]}(s)}{1 - \alpha}$ with $\alpha \in (0,1)$ for $\alpha$-superquantile (or CVaR), and $w(s) = r s^{r-1}$ with $1\leq r<2$ for $r$-extremile, see e.g \cite{risk_mehta2023stochastic}.

Formally differentiating through the sorting operation yields the gradient oracle
\begin{equation}
    \label{eq:gradient_oracle_spectral_risk}
    G^{\SR}_n(\theta)
    :=
    \frac{1}{n}
    \sum_{i=1}^n
    w(i/n)\,
    \nabla \ell_{(i)}(\theta).
\end{equation}

We can show \eqref{eq:gradient_oracle_spectral_risk} is a subgradient of the empirical spectral risk objective. This fact is known and can be established by more elementary arguments; see, for example, 
\cite[Property 2]{pillutla2024federated}; our analysis places it within a broader framework and relaxes several assumptions commonly imposed in earlier treatments, including gradient Lipschitz continuity and boundedness of the loss. As a consequence of \Cref{th:limit_subdiff}, we obtain convergence of the gradient oracle in 
\eqref{eq:gradient_oracle_spectral_risk}. 
This holds under the following conditions.

\begin{assumption}
\label{ass:differentiability_spectral_risk}
\begin{enumerate}
    \item[]
    \item $w : [0,1] \to \R_+$ is non-decreasing, bounded and continuous on an open set of full Lebesgue measure, $\ell$ is measurable, for almost all $x \in \X$, $\ell(\cdot, x)$ is differentiable and $\nabla_\theta \ell$ is jointly continuous.
    \item     There exist a non-negative and locally  bounded function $\kappa : \R^p \to \R_+$, a measurable function $\psi : \X \to \R_+$ and $\eta > 0$ such that    $\int_\X \psi^{1+\eta}  \di \mu < \infty$ and for all $(\theta,x) \in \R^p \times \X$, $|\ell(\theta,x)| + \|\nabla_\theta \ell(\theta,x)\| \leq \kappa(\theta)  \psi(x).$
\end{enumerate}
\end{assumption}
\begin{corollary} \label{cor:limit_spectral_risk} Let $\Theta \subset \R^p$ be a compact subset. Under \Cref{ass:differentiability_spectral_risk}, for all $n \geq 1$, $G^{\SR}_n \in \partial \SR_{n}$ and almost surely, for any $\varepsilon > 0$, there exists $N_\varepsilon \geq 0$ such that for all $n \geq N_\varepsilon$,
\begin{enumerate}
    \item $\partial \SR_n(\theta) \subset \partial \SR(\theta + \varepsilon \Bar{\B}) + \varepsilon \Bar{\B}$ for all $\theta \in \Theta$.
    \item $\crit_\Theta \SR_n \subset (\crit_\Theta \SR) + \varepsilon \Bar{\B}$.
\end{enumerate}
\end{corollary}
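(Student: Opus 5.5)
We recast the spectral risk as a one-dimensional transport cost and then apply \Cref{th:limit_subdiff} together with the parameterized cost differentiation lemma. Take $\Y=[0,1]$ and $\nu=\operatorname{Unif}(0,1)$, let $f(\theta,x)=\ell(\theta,x)$ and $g(\theta,s)=s$, and use the unit cost $c(u,v)=-u\,W(v)$. Here $W$ is a primitive-type reparametrization of $w$; the simplest choice is to put the weight directly into the target, i.e. $g(\theta,s)=w(s)$ with $c(u,v)=-uv$.

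The key step is the one-dimensional comonotone rearrangement: since $w$ is non-decreasing, the coupling of $u$ and $w(s)$ that maximizes $\int u\,w(s)$ is the quantile coupling. Hence
\[
-\T_c(\theta)=\int_0^1 F^{-1}_{(\ell_\theta)_\#\mu}(s)\,w(s)\,\di s=\SR(\theta).
\]
Equivalently, $\SR(\theta)=-\inf_{\gamma\in\Pi(\mu,\nu)}\int -\ell(\theta,x)w(s)\,\di\gamma$. The sign means we are dealing with a supremum rather than an infimum. I would handle this by applying the theorems to $C(\theta,x,s)=-\ell(\theta,x)w(s)$ and noting that $\partial(-h)=-\partial h$ for the Clarke subdifferential. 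That identity preserves both graphical inclusions and critical sets, up to the sign in $0\in\partial F+N_\Theta$. Note that $0\in-\partial h+N_\Theta$ is not the same as $0\in\partial h+N_\Theta$, so I must keep the negated objective consistently and work with the cost $\T_c=-\SR$ throughout.

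The empirical side needs more care. \Cref{th:limit_subdiff} uses empirical measures on both sides, whereas $\SR_n$ uses the deterministic grid weights $w(i/n)$. I would therefore take $\nu_n=\frac1n\sum_i\delta_{i/n}$ rather than an i.i.d.\ uniform sample. The sorting formula for equal-size empirical measures then gives $\SR_n=-\T_c^{n,n}$ exactly. For these deterministic marginals, $\nu_n\to\nu$ weakly and $\int w^{1+\eta}\,\di\nu_n$ is bounded. I would check that the proof of \Cref{th:limit_subdiff} only uses the almost-sure weak convergence and the uniform integrability of the empirical marginals, and I expect this to hold from the graphical-convergence machinery of \Cref{sec:graphical_convergence}.

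\Cref{ass:differentiability_general_transport_cost} is verified as follows.
\begin{itemize}
\item Continuity of $C$ and $\nabla_\theta C=-w(s)\nabla_\theta\ell$ holds on $\R^p\times E\times F$, with $F$ the open full-measure set where $w$ is continuous and $E$ the open full-measure set where the regularity of $\ell$ holds. If the full-measure set for $\ell$ is not open, that needs an adjustment.
\item For the growth bound, $|C|+\|\nabla C\|\le\|w\|_\infty\kappa(\theta)\psi(x)$, which fits with $\psi_\Y=0$.
\end{itemize}

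The membership $G_n^{\SR}\in\partial\SR_n$ follows from the differentiation lemma applied to the empirical measures. The permutation coupling $\gamma=\frac1n\sum_i\delta_{(x_{\sigma(i)},i/n)}$, with $\sigma$ the sorting permutation and ties broken arbitrarily, is optimal by the rearrangement inequality. Integrating $-\nabla_\theta C$ against it yields $G_n^{\SR}$.

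Item 1 is then exactly \Cref{th:limit_subdiff}. For item 2, let $\theta_n\in\crit_\Theta\SR_n$. Item 1 gives $v_n\in\partial\SR(\theta_n')$ with $\|\theta_n'-\theta_n\|\le\varepsilon$ and $\|v_n+\nu_n'\|\le\varepsilon$, where $\nu_n'\in N_\Theta(\theta_n)$. I would argue by contradiction: suppose there are points $\theta_n\in\crit_\Theta\SR_n$ with $\operatorname{dist}(\theta_n,\crit_\Theta\SR)\ge\varepsilon_0$, and pass to subsequences using compactness of $\Theta$. The subgradients $v_n$ are bounded by $\sup\kappa\int\psi$. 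Outer semicontinuity of $\partial\SR$ and closedness of the graph of $N_\Theta$ then give a limit in $\crit_\Theta\SR$, which is a contradiction. The main obstacle I expect is the adaptation of \Cref{th:limit_subdiff} to the deterministic grid $\nu_n$, together with the sign and tie bookkeeping.
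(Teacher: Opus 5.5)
Your proposal is correct and follows the paper's own argument: the same lifted cost $C(\theta,x,s)=-\ell(\theta,x)w(s)$ against $\operatorname{Unif}(0,1)$ with the deterministic grid $\nu_n=\frac1n\sum_i\delta_{i/n}$, and the envelope formula with the sorting coupling to get $G_n^{\SR}\in\partial\SR_n$. The adaptation you anticipate is not needed, because the result actually invoked is \Cref{prop:graphical_convergence_general}, which is stated for arbitrary weakly convergent marginals with uniformly bounded $(1+\eta)$-moments (trivial here since $\psi_\Y=0$); your sign bookkeeping via $\partial(-h)=-\partial h$ and your compactness argument for item 2 just make explicit what the paper leaves implicit, the latter being \Cref{lem:critical_set_outer} applied to $\partial\SR_n+N_\Theta$.
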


\subsubsection{Fair learning}

We next consider an application to fair learning. To promote fairness with respect to a sensitive attribute, such as gender or demographic group, several works impose \emph{distributional parity} constraints on a score function \cite{fairness_feldman2015certifying,fairness_risser2022tackling,fairness_rychener2022metrizing}. Depending on the choice of score, this may encode, for instance, parity of losses or parity of predictions. Let $\mu^0$ and $\mu^1$ denote the data distributions associated with two sensitive groups, and let $s : \Theta \times \X \to \R$ be a score function. Population-level discrepancy between the two score distributions may be measured through the one-dimensional quadratic transport cost
\begin{equation}
    \label{eq:FR_population}
    \FR(\theta)
    :=
    \frac{1}{2}
    \int_0^1
    \left(
    F^{-1}_{(s_\theta)_\# \mu^0}(u)
    -
    F^{-1}_{(s_\theta)_\# \mu^1}(u)
    \right)^2
    \,\di u,
\end{equation}
where $s_\theta := s(\theta,\cdot)$. This quantifies discrepancies between the score distributions of the two groups. Given i.i.d  samples $x_1^j,\ldots,x_m^j \sim \mu^j,$ $j\in\{0,1\},$ assumed mutually independent, let $\mu_m^0$ and $\mu_m^1$ denote the corresponding empirical measures. For each $\theta \in \Theta$, we write
$s^j_{(1)}(\theta)
\leq \cdots \leq
s^j_{(m)}(\theta)$ for the ordered values of $s(\theta,x_1^j),\ldots,s(\theta,x_m^j).$ The empirical fairness regularizer is then
\begin{equation}
\label{eq:FR_empirical}
    \FR_m(\theta)
    :=
    \frac{1}{2m}
    \sum_{i=1}^m
    \left(
    s^0_{(i)}(\theta)
    -
    s^1_{(i)}(\theta)
    \right)^2.
\end{equation}
Equivalently, $\FR_m(\theta)$ is the empirical transport cost between $(s_\theta)_\#\mu_m^0$ and $(s_\theta)_\#\mu_m^1$ for the cost $c(u,v)=\frac{1}{2}(u-v)^2.$ This regularizer can then be incorporated into empirical risk minimization to promote fairness during training. Given a loss function $\ell : \Theta \times \X \to \R$, define $\mathcal{L}_n(\theta)
:=
\frac{1}{n}
\sum_{i=1}^n
\ell(\theta,x_i),$
and consider the regularized objective
\begin{equation*}
\label{eq:fair_regularized_erm}
    \min_{\theta\in\Theta}
    \mathcal{L}_n(\theta)
    +
    \lambda \FR_m(\theta),
\end{equation*}
where $\lambda>0$ controls the strength of the fairness penalty. Formally differentiating \eqref{eq:FR_empirical} through the sorting operation yields the gradient oracle
\begin{equation}
    \label{eq:gradient_oracle_fairness}
    G_m^{\FR}(\theta)
    :=
    \frac{1}{m}
    \sum_{i=1}^m
    \left(
    s^0_{(i)}(\theta)
    -
    s^1_{(i)}(\theta)
    \right)
    \left(
    \nabla_\theta s^0_{(i)}(\theta)
    -
    \nabla_\theta s^1_{(i)}(\theta)
    \right).
\end{equation}
As in the spectral-risk setting, this oracle is a subgradient of the empirical objective. Moreover, our general approximation result yields convergence of empirical subgradients toward the subdifferential of the population fairness regularizer \eqref{eq:FR_population}. We impose the following assumptions.

\begin{assumption}
\label{ass:differentiability_fairness}
\begin{enumerate}
    \item[]
    \item The score function $s$ is measurable, $s(\cdot,x)$ is differentiable for $\mu^0$- and $\mu^1$-almost every $x\in\X$, and $\nabla_\theta s$ is jointly continuous.
    
    \item There exist a non-negative locally bounded function
    $\kappa : \R^p \to \R_+$,  a measurable function $\psi : \X \to \R_+$ and $\eta>0$, such that $\int_\X \psi^{2(1+\eta)}\,\di\mu^j <\infty,$ $j\in\{0,1\},$ and, for all $(\theta,x)\in\R^p\times\X$, $|s(\theta,x)|
    +
    \|\nabla_\theta s(\theta,x)\|
    \leq
    \kappa(\theta) \psi(x).$
    
    \item The loss function $\ell$ is measurable, $\ell(\cdot,x)$ is differentiable for $\mu$-almost every $x\in\X$, and $\nabla_\theta\ell(\cdot,x)$ is continuous. Moreover, there exist a locally bounded function
    $r:\R^p\to\R_+$ and a $\mu$-integrable function
    $K:\X\to\R_+$ such that $\|\nabla_\theta \ell(\theta,x)\|
    \leq
    r(\theta)K(x)$ for all $(\theta,x)\in\R^p\times\X$.
\end{enumerate}
\end{assumption}

The following result is a direct consequence of \Cref{th:limit_subdiff}.

\begin{corollary}
\label{cor:limit_fairness}
Let $\Theta\subset\R^p$ be compact. Under
\Cref{ass:differentiability_fairness}, for all $m\geq 1$, $G_m^{\FR}\in \partial \FR_m.$
Furthermore, almost surely, for every $\varepsilon>0$, there exists
$N_\varepsilon\geq 0$ such that, for all $m\geq N_\varepsilon$,
\begin{enumerate}
    \item $\partial \FR_m(\theta)
    \subset
    \partial \FR(\theta+\varepsilon\Bar{\B})
    +
    \varepsilon\Bar{\B},$ $\text{for all }\theta\in\Theta.$

    \item Writing $\mathcal{L}(\theta)
    :=
    \int_\X \ell(\theta,x)\,\di\mu(x),$
    one has $\partial\bigl(\mathcal{L}_n+\lambda\FR_m\bigr)(\theta)
    \subset
    \partial\bigl(\mathcal{L}+\lambda\FR\bigr)
    (\theta+\varepsilon\Bar{\B})
    +
    \varepsilon\Bar{\B},$ $\text{for all }\theta\in\Theta.$

    \item $\crit_\Theta\bigl(\mathcal{L}_n+\lambda\FR_m\bigr)
    \subset
    \crit_\Theta\bigl(\mathcal{L}+\lambda\FR\bigr)
    +
    \varepsilon\Bar{\B}.$
\end{enumerate}
\end{corollary}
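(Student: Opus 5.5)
We would reduce \Cref{cor:limit_fairness} to \Cref{th:limit_subdiff} applied to the pair of groups. Take $\X$ as both source and target space, $\mu=\mu^0$, $\nu=\mu^1$, $f=g=s$ with $\mathcal U=\mathcal V=\R$, and $c(u,v)=\frac12(u-v)^2$. Then $C(\theta,x,y)=\frac12(s(\theta,x)-s(\theta,y))^2$ and
\[
\nabla_\theta C(\theta,x,y)=(s(\theta,x)-s(\theta,y))(\nabla_\theta s(\theta,x)-\nabla_\theta s(\theta,y)).
\]
In dimension one the quadratic transport cost equals the quantile formula, so $\T_c=\FR$. For equal sample sizes it is the sorted-matching formula, so $\T_c^{m,m}=\FR_m$.

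\emph{Checking \Cref{ass:differentiability_general_transport_cost}.} Joint continuity of $C$ and $\nabla_\theta C$ on full-measure open sets follows from the continuity of $s$ and $\nabla_\theta s$. Continuity of $s$ itself comes from differentiability together with joint continuity of the gradient; if needed, we restrict to the open set where this holds. Using $|s|+\|\nabla s\|\le \kappa\psi$, we get
\[
|C|+\|\nabla_\theta C\|\le 3\kappa(\theta)^2(\psi(x)+\psi(y))^2\le 6\kappa(\theta)^2(\psi(x)^2+\psi(y)^2).
\]
So we can take $\psi_\X=\psi_\Y=\psi^2$. These have a finite $(1+\eta)$ moment precisely because of the assumed $2(1+\eta)$ moment.

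\emph{Subgradient membership.} $G_m^{\FR}\in\partial\FR_m$ follows from the parameterized cost differentiation lemma (\Cref{prop:envelope_ot}) applied with empirical measures. At fixed $\theta$, the monotone (sorted) matching between $\{s(\theta,x^0_i)\}$ and $\{s(\theta,x^1_i)\}$ induces a permutation coupling in $\Pi(\mu^0_m,\mu^1_m)$ that is optimal for $C(\theta,\cdot,\cdot)$. This is the standard one-dimensional optimality of sorting for convex costs, and ties cause no problem since any monotone matching is optimal. Integrating $\nabla_\theta C$ against this coupling gives exactly \eqref{eq:gradient_oracle_fairness}.

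\emph{Item 1.} This is \Cref{th:limit_subdiff} with $n=m$.

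\emph{Item 2.} We combine item 1 with a uniform law of large numbers for $\nabla\mathcal L_n$. Since $\mathcal L_n$ is $C^1$, we have $\partial(\mathcal L_n+\lambda\FR_m)=\nabla\mathcal L_n+\lambda\partial\FR_m$ by the Clarke sum rule with a smooth term. The same holds for $\mathcal L$, which is $C^1$ by dominated convergence using $rK$. Almost surely $\sup_{\Theta+\bar\B}\|\nabla\mathcal L_n-\nabla\mathcal L\|\to0$, by the uniform strong LLN for continuous integrands with an integrable envelope on a compact set. Apply item 1 with $\varepsilon'$ small. Then transport a point $\theta'\in\theta+\varepsilon'\bar\B$ back: uniform continuity of $\nabla\mathcal L$ on a compact neighborhood gives $\|\nabla\mathcal L(\theta)-\nabla\mathcal L(\theta')\|$ small. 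This yields the inclusion with error $\lambda\varepsilon'+o(1)\le\varepsilon$. We use item 1 on the slightly enlarged compact set $\Theta+\bar\B$.

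\emph{Item 3.} This is the main obstacle. It is a graphical-convergence argument: the limiting objective's subdifferential is only outer semicontinuous, so we must argue by contradiction. Suppose there are $\varepsilon_0>0$, $m_k\to\infty$ and $\theta_k\in\crit_\Theta(\mathcal L_{m_k}+\lambda\FR_{m_k})$ with $\operatorname{dist}(\theta_k,\crit_\Theta(\mathcal L+\lambda\FR))\ge\varepsilon_0$. By item 2 there are $v_k\in\partial(\mathcal L_{m_k}+\lambda\FR_{m_k})(\theta_k)$ with $-v_k\in N_\Theta(\theta_k)$, and points $\theta_k'$ with $\|\theta_k'-\theta_k\|\le\varepsilon_k\to0$ and $w_k\in\partial(\mathcal L+\lambda\FR)(\theta_k')$ with $\|w_k-v_k\|\le\varepsilon_k$. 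Note $N_\Theta(x)$ in \eqref{eq:critical_points} forces $\theta_k\in\Theta$, and the $w_k$ are bounded since the Clarke subdifferential is locally bounded. Extract a subsequence with $\theta_k\to\bar\theta\in\Theta$ and $w_k\to\bar w$. Outer semicontinuity of the Clarke subdifferential gives $\bar w\in\partial(\mathcal L+\lambda\FR)(\bar\theta)$, and closedness of the graph of $N_\Theta$ for convex $\Theta$ gives $-\bar w\in N_\Theta(\bar\theta)$. Hence $\bar\theta$ is critical, contradicting the distance bound. If $\Theta$ is not assumed convex, we would use the limiting normal cone, whose graph is also closed. The delicate points are to state the uniform LLN precisely and to make the neighborhoods compatible; the rest is bookkeeping.
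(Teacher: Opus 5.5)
Your proposal is correct and follows essentially the paper's proof. You reduce to \Cref{th:limit_subdiff} with $C(\theta,x,y)=\frac12(s(\theta,x)-s(\theta,y))^2$ and $\psi_\X=\psi_\Y=\psi^2$, obtain $G_m^{\FR}\in\partial\FR_m$ from the envelope formula (\Cref{prop:envelope_ot}) at the sorted optimal coupling, and finish with a uniform law of large numbers for $\nabla\mathcal L_n$ and stability of critical sets; your direct compactness arguments for items 2--3 inline \Cref{lem:graphical_cv_sum} and \Cref{lem:critical_set_outer}, and they usefully treat the unbounded $N_\Theta$ through closedness of its graph, which the sum lemma (it needs local boundedness) does not cover.

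One inaccuracy, shared by the paper since it never verifies the continuity hypothesis: joint continuity of $\nabla_\theta s$ gives joint continuity of $s(\theta,x)-s(0,x)$ but not continuity of $s$ in $x$. So either read the assumption as requiring $s$ jointly continuous, or recover the continuity hypothesis on $C$ by lifting $x\mapsto(x,s(0,x))$.
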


\paragraph{Unbalanced group samples.} In fair learning problems, the sensitive groups are often unbalanced. We therefore state a closed-form expression for the empirical transport penalty when the group sizes $n_0$ and $n_1$ are not necessarily equal. A related formula appears in \cite[Proposition 3.1]{sliced_rodriguez2025learning}; the version proposed below is slightly more compact. See the extended version in \Cref{lem:unbalanced_wasserstein_extended} for a proof.

\begin{lemma}[Wasserstein cost for unequal sample sizes]
\label{lem:unbalanced_wasserstein}
Let $\mu_U = \frac1n \sum_{i=1}^n \delta_{U_i}$ and $\mu_V = \frac1m \sum_{j=1}^m \delta_{V_j}$ be empirical measures on $\R$, with order statistics $U_{(1)} \le \cdots \le U_{(n)}$ and $V_{(1)} \le \cdots \le V_{(m)}$.  
Let $0 = h_0 \leq h_1 \leq \cdots \leq h_{n+m} = 1$ be the sorted values of $\{k/n\}_{k=0}^n \cup \{l/m\}_{l=0}^m$, and set $\Delta_i^{n,m} = h_i - h_{i-1}$. Then for any $q \ge 1$,
\[
 \int_{0}^1|F^{-1}_U(s) - F^{-1}_{V}(s)|^q \di s =
\sum_{i=1}^{n+m}
\bigl|U_{(\lceil n h_i \rceil)} - V_{(\lceil m h_i \rceil)}\bigr|^q \,\Delta_i^{n,m}.
\]
\end{lemma}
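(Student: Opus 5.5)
The plan is to compute the integral directly, using the fact that both quantile functions are piecewise constant on a common partition of $(0,1]$. The empirical quantile function of $\mu_U$ is $F_U^{-1}(s) = U_{(\lceil ns\rceil)}$ for $s\in(0,1]$. Indeed, $F_U(t)\ge s$ holds if and only if at least $\lceil ns\rceil$ of the $U_i$ are $\le t$, that is, if and only if $t\ge U_{(\lceil ns\rceil)}$. Likewise $F_V^{-1}(s)=V_{(\lceil ms\rceil)}$. Ties among the $U_i$ cause no trouble, since the order statistics are defined with multiplicity. The value at $s=0$ plays no role, because we integrate against Lebesgue measure.

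Next, the map $s\mapsto\lceil ns\rceil$ is constant on each interval $(k/n,(k+1)/n]$. Similarly, $s\mapsto\lceil ms\rceil$ is constant on each interval $(l/m,(l+1)/m]$. The points $h_0\le\cdots\le h_{n+m}$ form the common refinement of the two grids, with possible repetitions. Fix $i$ with $h_{i-1}<h_i$. Then the open interval $(h_{i-1},h_i)$ contains no point of either grid. Hence $(h_{i-1},h_i]$ is contained in a single cell $(k/n,(k+1)/n]$ and in a single cell $(l/m,(l+1)/m]$. On that interval, $\lceil ns\rceil$ is constant and equal to its value at the right endpoint, namely $\lceil nh_i\rceil$. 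The same holds for $\lceil ms\rceil$, whose constant value is $\lceil mh_i\rceil$. The right-endpoint evaluation is justified because the cells are half-open on the left and closed on the right.

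Finally, split $\int_0^1=\sum_{i=1}^{n+m}\int_{h_{i-1}}^{h_i}$. On each piece the integrand is the constant $|U_{(\lceil nh_i\rceil)}-V_{(\lceil mh_i\rceil)}|^q$, so the piece contributes that constant times $\Delta_i^{n,m}$. Degenerate intervals, where $h_{i-1}=h_i$ (for instance at $0$, at $1$, or at common grid points), have $\Delta_i^{n,m}=0$. These terms therefore contribute nothing, whatever the index. This is the only point requiring care: we must check that every index stays well defined there. Since $h_i\in[0,1]$, the integer $\lceil nh_i\rceil$ lies in $\{0,\dots,n\}$. It could equal $0$ only when $h_i=0$, which occurs only for a degenerate leading interval whose weight vanishes, so any convention for $U_{(0)}$ is harmless. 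Summing the pieces gives the stated formula.

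The main obstacle is the bookkeeping at the endpoints: the left-continuous convention for the quantile function, the repeated grid points, and the index-$0$ edge case. None of these is deep. The value $q\ge1$ is used only to make the integrand well defined, so the same argument applies to any measurable function of $F_U^{-1}-F_V^{-1}$.
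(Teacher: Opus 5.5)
Your proposal is correct and follows essentially the same route as the paper. Both arguments identify $F_U^{-1}(s)=U_{(\lceil ns\rceil)}$ and $F_V^{-1}(s)=V_{(\lceil ms\rceil)}$, note that both are constant on each nondegenerate cell $(h_{i-1},h_i]$ of the common refinement, and sum the integral cell by cell. Your observation that no grid point lies strictly inside $(h_{i-1},h_i)$ replaces the paper's four-case enumeration, and you also handle the zero-width cells and the index-$0$ edge case, which the paper leaves implicit.
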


\begin{remark}
The sequence $(h_i)_{i=0}^{n+m}$ may contain repeated values whenever
$\{k/n\}_{k=0}^n$ and $\{\ell/m\}_{\ell=0}^m$ intersect. In that case,
the corresponding increments $\Delta_i^{n,m}$ vanish and removing such vacuous terms in practice yields an equivalent formula.
\end{remark}
For empirical group measures $\mu_{n_0}^0$ and $\mu_{n_1}^1$, the fairness regularizer becomes

\[
\operatorname{FR}_{n_0,n_1}(\theta)
= \frac{1}{2}
\sum_{i=1}^{n_0+n_1}
\big(
s_{(\lceil n_0 h_i \rceil)}^0(\theta)
-
s_{(\lceil n_1 h_i \rceil)}^1(\theta)
\big)^2
\Delta_i^{n_0,n_1},
\]
where $(h_i, \Delta_i^{n_0,n_1})_{i=0, \ldots,n_0+n_1}$  are given by \Cref{lem:unbalanced_wasserstein}. Consider the oracle
\begin{equation*}
G_{n_0,n_1}^{\operatorname{FR}}(\theta)
=
\sum_{k=1}^{n_0+n_1}
\big(
s^0_{(\lceil n_0 h_k \rceil)}(\theta)
-
s^{1}_{(\lceil n_1 h_k \rceil)}(\theta)
\big)
\big(
\nabla s^0_{(\lceil n_0 h_k \rceil)}(\theta)
-
\nabla s^1_{(\lceil n_1 h_k \rceil)}(\theta)
\big)
\Delta_k^{n_0,n_1}.
\end{equation*}
The convergence result obtained in the balanced setting extends directly to unequal sample sizes, that is, as $n_0,n_1\to\infty$, the empirical fairness subgradients graphically approximate the subdifferential of the population regularizer $\FR$.

\begin{corollary} \label{cor:limit_subdiff_fair} Let $\Theta \subset \R^p$ be a compact subset. Under \Cref{ass:differentiability_fairness}, $G_{n_0, n_1}^{\FR} \in \partial \FR_{n_0, n_1}$ and almost surely, for any $\varepsilon > 0$, there exists $N_\varepsilon \geq 0$ such that for all $n_0, n_1 \geq N_\varepsilon$, 

\begin{enumerate}
    \item $\partial \FR_{n_0,n_1}(\theta) \subset \partial \FR (\theta + \varepsilon \Bar{\B}) + \varepsilon \Bar{\B} \quad \text{for all } \theta \in \Theta.$
    \item $\partial\bigl(\mathcal{L}_n+\lambda\FR_{n_0,n_1}\bigr)(\theta)
    \subset
    \partial\bigl(\mathcal{L}+\lambda\FR\bigr)
    (\theta+\varepsilon\Bar{\B})
    +
    \varepsilon\Bar{\B},$ $\text{for all }\theta\in\Theta.$ Furthermore, $\crit_\Theta (\mathcal{L}_n + \lambda\FR_{n_0,n_1}) \subset \crit_\Theta (\mathcal{L} + \lambda\FR) + \varepsilon \Bar{\B}$. 
\end{enumerate}
\end{corollary}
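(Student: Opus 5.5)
We reduce \Cref{cor:limit_subdiff_fair} to \Cref{th:limit_subdiff}, applied with $\X=\Y$, $\mu=\mu^0$, $\nu=\mu^1$, $f=g=s$ and unit cost $c(u,v)=\frac12(u-v)^2$. The sample-level cost is then $C(\theta,x,y)=\frac12(s(\theta,x)-s(\theta,y))^2$.

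\textbf{Checking \Cref{ass:differentiability_general_transport_cost}.} Its gradient is
\[
\nabla_\theta C=(s(\theta,x)-s(\theta,y))(\nabla_\theta s(\theta,x)-\nabla_\theta s(\theta,y)).
\]
Joint continuity of $C$ and $\nabla_\theta C$ on the relevant full-measure sets follows from item 1 of \Cref{ass:differentiability_fairness}. Continuity of $s$ itself is inherited from the continuity of $\nabla_\theta s$ in $\theta$, together with measurability; we take the open sets $E,F$ as in the statement of that assumption. For the growth condition, $|C|+\|\nabla_\theta C\|\le 3\kappa(\theta)^2(\psi(x)+\psi(y))^2\le 6\kappa(\theta)^2(\psi(x)^2+\psi(y)^2)$. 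So we may take $\tilde\kappa=6\kappa^2$ and $\psi_\X=\psi_\Y=\psi^2$. The moment condition $\int\psi^{2(1+\eta)}\di\mu^j<\infty$ is exactly what is needed.

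\textbf{Subgradient membership and identification.} We identify $\T_c$ with $\FR$ and $\T_c^{n_0,n_1}$ with $\FR_{n_0,n_1}$ through the one-dimensional quantile representation; for the empirical case this is \Cref{lem:unbalanced_wasserstein} with $q=2$. To get $G^{\FR}_{n_0,n_1}\in\partial\FR_{n_0,n_1}$, we use the parameterized cost differentiation lemma (\Cref{prop:envelope_ot}) applied to the empirical measures. We exhibit the plan
\[
\gamma_\theta=\sum_k\Delta_k^{n_0,n_1}\,\delta_{(x^0_{\sigma_0(\lceil n_0h_k\rceil)},\,x^1_{\sigma_1(\lceil n_1h_k\rceil)})},
\]
where $\sigma_j$ are the sorting permutations at $\theta$. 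This is the monotone (north-west corner) coupling, hence it is optimal in $\Pi(\mu^0_{n_0},\mu^1_{n_1})$ for the convex cost $c$. Its marginals are correct because each $s^0_{(i)}$ receives total weight $1/n_0$ from the intervals $(h_{k-1},h_k]\subset((i-1)/n_0,i/n_0]$. Integrating $\nabla_\theta C$ against $\gamma_\theta$ gives exactly $G^{\FR}_{n_0,n_1}(\theta)$. Ties need no special treatment, since any sorted order yields an optimal plan.

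\textbf{Item 1.} This is now \Cref{th:limit_subdiff} verbatim, with $n=n_0$ and $m=n_1$.

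\textbf{Item 2.} For the loss term, a standard uniform law of large numbers for the continuous, integrably dominated gradients gives $\sup_{\theta\in\Theta'}\|\nabla\mathcal L_n-\nabla\mathcal L\|\to0$ almost surely on a compact neighborhood $\Theta'$ of $\Theta$. Here $\nabla\mathcal L$ is continuous by dominated convergence, and $\mathcal L_n,\mathcal L$ are $C^1$, so the sum rule holds with equality. Hence
\[
\partial(\mathcal L_n+\lambda\FR_{n_0,n_1})(\theta)=\nabla\mathcal L_n(\theta)+\lambda\,\partial\FR_{n_0,n_1}(\theta).
\]
Apply item 1 with $\varepsilon'=\varepsilon/(2(1+\lambda))$ to pick $\theta'\in\theta+\varepsilon'\bar\B$. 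Uniform continuity of $\nabla\mathcal L$ on $\Theta'$ then bounds $\|\nabla\mathcal L(\theta)-\nabla\mathcal L(\theta')\|$, which gives the inclusion after shrinking $\varepsilon'$.

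\textbf{Critical sets.} This is the step requiring most care. Argue by contradiction. Suppose there are $\theta_k\in\crit_\Theta(\mathcal L_{n_k}+\lambda\FR_{n_0^k,n_1^k})$ at distance at least $\varepsilon$ from $\crit_\Theta(\mathcal L+\lambda\FR)$. Then $v_k\in-N_\Theta(\theta_k)$ lies in the empirical subdifferential, and these $v_k$ are bounded by local boundedness of $\partial\FR$ on compacts. Item 2 gives $\theta'_k\to\bar\theta$ and $w_k\in\partial(\mathcal L+\lambda\FR)(\theta'_k)$ with $\|w_k-v_k\|\to0$. Passing to a subsequence, outer semicontinuity of the Clarke subdifferential and closedness of the graph of $N_\Theta$ give $0\in\partial(\mathcal L+\lambda\FR)(\bar\theta)+N_\Theta(\bar\theta)$. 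This contradicts the distance assumption. The corresponding statement in \Cref{cor:limit_spectral_risk} follows by the same argument.
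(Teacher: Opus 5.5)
Your route matches the paper's. Item 1 follows from \Cref{th:limit_subdiff} with $C(\theta,x,y)=\frac12(s(\theta,x)-s(\theta,y))^2$ and $\psi_\X=\psi_\Y=\psi^2$. Membership $G^{\FR}_{n_0,n_1}\in\partial\FR_{n_0,n_1}$ follows from the monotone plan of \Cref{lem:unbalanced_wasserstein_extended} fed into \Cref{prop:envelope_ot}. Item 2 uses a uniform law of large numbers for $\nabla\mathcal L_n$. Your growth bound also controls $\nabla_\theta C$, which the paper's does not.

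One step, however, is wrong as written. Joint continuity of $s$ is not inherited from continuity of $\nabla_\theta s$ plus measurability. Since $s(\theta,x)=s(\theta_0,x)+\int_0^1\langle\nabla_\theta s(\theta_0+t(\theta-\theta_0),x),\theta-\theta_0\rangle\,\di t$, $s$ is jointly continuous exactly when $x\mapsto s(\theta_0,x)$ is. Adding a bounded discontinuous measurable function of $x$ alone (say an indicator) to $s$ leaves \Cref{ass:differentiability_fairness} intact. That assumption also contains no open sets $E,F$ for you to ``take''.

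So item 1 of \Cref{ass:differentiability_general_transport_cost} is not established. The proof of \Cref{th:limit_subdiff} relies on it through \Cref{lem:semi_continuity_singleton} inside \Cref{prop:parametric_stability}, where couplings converge only weakly. The paper's proof is equally silent here; it only checks the growth bound. So this is a hypothesis both arguments use tacitly. You should either state it explicitly ($s$ jointly continuous on $\R^p\times E$ with $E$ open and $\mu^0(E)=\mu^1(E)=1$), or supply an extra argument. One option is to refine the Polish topology of $\X$, keeping the same Borel sets, so that $s(\theta_0,\cdot)$ becomes continuous. This keeps $\nabla_\theta s$ jointly continuous, leaves the moment bounds unchanged, and preserves almost sure weak convergence of the empirical measures.

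The remaining differences are in item 2, and they favour your version. You get the subdifferential inclusion directly from item 1, the $C^1$ sum rule and uniform continuity of $\nabla\mathcal L$, instead of \Cref{lem:graphical_cv_sum}. You prove the critical-set inclusion by compactness and contradiction. The paper instead applies \Cref{lem:critical_set_outer} to $\nabla\mathcal L_n+\lambda\partial\FR_{n_0,n_1}+N_\Theta$. Graphical convergence of those maps is not covered by \Cref{lem:graphical_cv_sum}, since $N_\Theta$ is not locally bounded. Your argument needs only boundedness of the $v_k$ and closedness of $\Graph N_\Theta$, which is exactly what the step requires.
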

The proof of the result is found in \Cref{subsec:proof_applications}. The inclusion $G_{n_0, n_1}^{\FR} \in \partial \FR_{n_0, n_1}$ requires a careful investigation and is established in \Cref{lem:correctness_fair}, while the two subsequent items are mainly consequences of the general \Cref{th:limit_subdiff}.
\subsubsection{Sliced Wasserstein learning}
\label{subsection:sliced_wasserstein}

We now illustrate the sliced transport framework of
\Cref{subsec:main_results_vanilla_sliced} through a distribution learning task. Let $f:\Theta\times\X\to\R^d$
be a parameterized generator, let $\mu$ be a source distribution on $\X$, and let $\nu$ be a target distribution on $\R^d$. We take $g(\theta,y)=y,$ $c(u,v)=\frac12 |u-v|^2,$ and let $\rho$ be a slicing distribution on $\mathbb{S}^{d-1}$. The resulting population objective is
\begin{equation*}
\label{eq:sliced_wasserstein_population}
    \SW(\theta)
    :=
    \int_{\mathbb{S}^{d-1}}
    \W_2^2\!\left(
        (\phi^\top f_\theta)_\#\mu,\;
        (\phi^\top \cdot)_\#\nu
    \right)
    \,\di\rho(\phi),
\end{equation*}
where $\W_2^2(\alpha,\beta)
:=
\inf_{\pi\in\Pi(\alpha,\beta)}
\int_{\R\times\R}
\frac12 |u-v|^2
\,\di\pi(u,v).$ Given i.i.d.\ directions $\phi_1,\ldots,\phi_k\sim\rho,$
source samples $x_1,\ldots,x_n\sim\mu,$
and target samples $y_1,\ldots,y_n\sim\nu,$ the empirical objective admits the computable formula
\begin{equation}
\label{eq:sliced_wasserstein_empirical}
    \SW_{n,k}(\theta)
    =
    \frac{1}{2nk}
    \sum_{j=1}^k
    \sum_{i=1}^n
    \left\langle
        \phi_j,\,
        f_{(i|\phi_j)}(\theta)-y_{(i|\phi_j)}
    \right\rangle^2.
\end{equation}
Here, for each direction $\phi$, the indices $(i|\phi)$ order the projected values $\phi^\top f(\theta,x_i)$  and $\phi^\top y_i$ for $i=1,\ldots,n$
respectively. As in previous applications, in the presence of ties, we fix an arbitrary ordering convention that does not affect our analysis. Differentiating \eqref{eq:sliced_wasserstein_empirical} through the sorting operation yields the subgradient oracle
\begin{equation}
\label{eq:gradient_oracle_sliced}
    G_{n,k}^{\SW}(\theta)
    :=
    \frac{1}{nk}
    \sum_{j=1}^k
    \sum_{i=1}^n
    \left\langle
        \phi_j,\,
        f_{(i|\phi_j)}(\theta)-y_{(i|\phi_j)}
    \right\rangle
    \Jac_\theta f_{(i|\phi_j)}(\theta)^\top \phi_j.
\end{equation}
The general sliced-cost result applies under the following conditions.
\begin{assumption}
\label{ass:differentiability_sliced}
\begin{enumerate}
    \item[]
    \item The map $f$ is measurable, $f(\cdot,x)$ is differentiable for $\mu$-almost every $x\in\X$, and $\Jac_\theta f$ is jointly continuous.

    \item There exist a non-negative locally bounded function
    $\kappa:\R^p\to\R_+$, measurable function
    $\psi_\X:\X\to\R_+$ and  $\eta>0$ such that $\int_\X \psi_\X^{2(1+\eta)}\,\di\mu<\infty,
    $ $\int_{\R^d} \|y\|^{2(1+\eta)}\,\di\nu <\infty,$ and, for all $(\theta,x)\in\R^p\times\X$, $\|f(\theta,x)\|
    +
    \|\Jac_\theta f(\theta,x)\|
    \leq
    \kappa(\theta)\psi_\X(x).$
\end{enumerate}
\end{assumption}
Under these assumptions, the empirical oracle
\eqref{eq:gradient_oracle_sliced} is a subgradient of the empirical sliced Wasserstein objective, and it converges graphically to the population subdifferential.
\begin{corollary}
\label{cor:limit_subdiff_sliced_wasserstein}
Let $\Theta\subset\R^p$ be a compact subset. Under
\Cref{ass:differentiability_sliced}, for all $n,k\in\N$, $G_{n,k}^{\SW}
\in
\partial \SW_{n,k}.$ Furthermore, almost surely, for any $\varepsilon >0$, there exists $N_\varepsilon \geq 0$ such that for any $n \geq N_\varepsilon$, there exists $K_{n} \geq 0$ such that for any $k \geq K_{n}$,
\begin{enumerate}
    \item $\partial \SW_{n,k}(\theta)
    \subset
    \partial \SW(\theta+\varepsilon\Bar{\B})
    +
    \varepsilon\Bar{\B}$ $
    \text{for all }\theta\in\Theta.$
    \item $\crit_\Theta \SW_{n,k}
    \subset
    \crit_\Theta \SW
    +
    \varepsilon\Bar{\B}.$
\end{enumerate}
\end{corollary}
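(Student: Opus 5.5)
The plan is to reduce \Cref{cor:limit_subdiff_sliced_wasserstein} to \Cref{th:limit_subdiff_sliced} with $g(\theta,y)=y$, $c(u,v)=\tfrac12|u-v|^2$ and $\mathcal K=\mathbb S^{d-1}$. The proof has three parts.

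\textbf{Assumptions.} We first check \Cref{assumption:main_differentiability_parameterized_sliced_cost} for
\[
C(\theta,x,y,\phi)=\tfrac12\bigl(\phi^\top f(\theta,x)-\phi^\top y\bigr)^2,
\qquad
\nabla_\theta C=\bigl(\phi^\top (f(\theta,x)-y)\bigr)\Jac_\theta f(\theta,x)^\top\phi .
\]
\begin{itemize}
\item \emph{Continuity.} Joint continuity of $C$ and $\nabla_\theta C$ on $\R^p\times E\times\R^d\times\mathcal K$ follows from joint continuity of $f$ and $\Jac_\theta f$. Here $E$ is an open set of full $\mu$-measure on which $f(\cdot,x)$ is differentiable. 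If only an almost-everywhere set is available, we take $E$ to be the open set where the continuity of \Cref{ass:differentiability_sliced} holds, following the convention the paper uses for the other applications.
\item \emph{Growth.} Since $\|\phi\|=1$, $|C|+\|\nabla_\theta C\|\le (\|f\|+\|y\|)^2+(\|f\|+\|y\|)\|\Jac_\theta f\|$. Bounding with $\|f\|+\|\Jac_\theta f\|\le\kappa\psi_\X$ gives $\le 2(1+\kappa(\theta))^2(\psi_\X(x)^2+\|y\|^2)+\dots$. We therefore set $\psi_\X'=\psi_\X^2+\psi_\X$ and $\psi_\Y'(y)=\|y\|^2+\|y\|$, and the $2(1+\eta)$-moment hypotheses give the needed $(1+\eta)$-integrability.
\end{itemize}

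\textbf{Items 1 and 2.} Item 1 is then exactly \Cref{th:limit_subdiff_sliced} with $m=n$. Since that theorem holds for all $n,m\ge N_\varepsilon$, it holds in particular on the diagonal, where $K_{n}:=K_{n,n}$. Note also that the one-dimensional transport between equal-size empirical measures is given by sorting, so $\SW_{n,k}$ coincides with $\ST_c^{n,n,k}$ in formula \eqref{eq:sliced_wasserstein_empirical}.

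For item 2, take $\theta\in\crit_\Theta\SW_{n,k}$. Then $0=v+w$ with $v\in\partial\SW_{n,k}(\theta)$ and $w\in N_\Theta(\theta)$. The statement as written only yields the inclusion for $\theta\in\Theta$, but critical points are automatically in $\Theta$ since $N_\Theta$ is empty outside, so this is no restriction.

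Item 1 gives $\theta'$ with $\|\theta'-\theta\|\le\varepsilon$ and $v'\in\partial\SW(\theta')$ with $\|v-v'\|\le\varepsilon$. The difficulty is the normal cone: $\theta'$ may not lie in $\Theta$, and $N_\Theta$ is only outer semicontinuous. I would argue by contradiction with a sequence $\varepsilon_j\to0$. Suppose there are points $\theta_j\in\crit_\Theta\SW_{n_j,k_j}$ at distance at least $\delta$ from $\crit_\Theta\SW$. By compactness, $\theta_j\to\bar\theta$. We have $v_j'\in\partial\SW(\theta_j')$, and these are bounded by local Lipschitz continuity of $\SW$, which follows from the growth bound. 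We also have $-v_j\in N_\Theta(\theta_j)$. Passing to limits and using outer semicontinuity of both $\partial\SW$ and $N_\Theta$ gives $0\in\partial\SW(\bar\theta)+N_\Theta(\bar\theta)$, a contradiction. The only point to handle carefully is the nested "for $n\ge N$, exists $K_n$" quantifier structure, which the sequence argument respects.

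\textbf{Oracle membership.} The claim $G^{\SW}_{n,k}\in\partial\SW_{n,k}$ comes from the empirical envelope formula (\Cref{prop:envelope_ot} applied with empirical measures, slicewise). Fix a direction $\phi_j$ and the chosen sorting convention. The monotone pairing of $\phi_j^\top f(\theta,x_i)$ with $\phi_j^\top y_i$ is an optimal plan, because the quadratic cost is submodular and the sorted matching is optimal even with ties. Evaluating $\nabla_\theta C$ against this plan gives the $j$-th summand of \eqref{eq:gradient_oracle_sliced}, so that summand is a subgradient of $t_c(\cdot,\phi_j)$.

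The main obstacle is summing over $j$, because the Clarke sum rule only gives the inclusion $\partial(\sum_j t_j)\subset\sum_j\partial t_j$. The step I would try first is to note that each $t_c(\cdot,\phi_j)$ for empirical measures is a minimum over finitely many permutations of $C^1$ functions. By the envelope formula it is lower-$C^1$ with negative, so the relevant structure is $-t_j=\max_\sigma(-\,\cdot)$, which is subdifferentially regular. Clarke equality in the sum rule holds for such functions, since $-\partial(-h)=\partial h$ and the sum of regular functions is regular. Hence $\partial\bigl(\tfrac1k\sum_j t_j\bigr)=\tfrac1k\sum_j\partial t_j$, and $G^{\SW}_{n,k}$ lies in it.
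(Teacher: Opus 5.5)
Your proposal is correct and follows the same route as the paper. Like the paper, you apply \Cref{th:limit_subdiff_sliced} on the diagonal $m=n$ with $C(\theta,x,y,\phi)=\tfrac12(\phi^\top f(\theta,x)-\phi^\top y)^2$, after checking the quadratic growth bound via the $2(1+\eta)$-moments. The paper's proof only verifies that growth bound, so two of your arguments fill in details it leaves implicit: the critical-set inclusion via compactness and outer semicontinuity of $\partial\SW$ and $N_\Theta$ (essentially the content of \Cref{lem:critical_set_outer}), and the membership $G^{\SW}_{n,k}\in\partial\SW_{n,k}$ via the slicewise envelope formula and Clarke regularity of $-t_c(\cdot,\phi_j)$, which gives equality in the sum rule (the finite-sum analogue of the interchange step in the proof of \Cref{prop:graph_convergence_param_sliced}).
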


\section{Graphical convergence of integrals against subsets of measures}
\label{sec:graphical_convergence}

Throughout this section, let \( S \) be a \emph{Polish space}\footnote{that is, a complete and separable metric space}. For a set-valued map \( H : \Theta \rightrightarrows \R^p \), its \emph{graph} is
\begin{equation}
\label{eq:graph}
    \Graph H := \{ (\theta,h) \in \Theta \times \R^p : h \in H(\theta) \}.
\end{equation}

We study the graphical convergence of set-valued maps defined by integration. For subsets \( P_k, P \subset \mathcal{P}(S) \), define
\[
H_k := \Bigl\{ \int_S h(\cdot,s)\,\di\gamma(s) : \gamma \in P_k \Bigr\} \qquad \text{and} \qquad
H := \Bigl\{ \int_S h(\cdot,s)\,\di\gamma(s) : \gamma \in P \Bigr\}.
\]
Under suitable assumptions, we prove that \( (H_k)_{k\in\N} \) converges graphically to \( H \) in excess, meaning that \( \Graph H_k \) is eventually contained in arbitrarily small neighborhoods of \( \Graph H \). These results are used to prove the approximation property in \Cref{th:limit_subdiff}. Background material from set-valued analysis is collected in \Cref{subsection:set_valued}, and the main convergence results are presented in \Cref{subsection:limit_integrals}.

\subsection{Preliminaries on set-valued maps}
\label{subsection:set_valued}
Unlike single-valued functions, set-valued mappings such as subdifferential \eqref{eq:clarke_subdifferential} exhibit an inherent discontinuity, which gives rise to distinct notions of semicontinuity. We expose these notions below.

\begin{definition}[Semicontinuity for set valued map] Let $Z : U \rightrightarrows V$ be a compact and nonempty-valued map.
\begin{enumerate}
    \item $Z$ is called outer semicontinuous at $u \in U$ if for any converging sequence $(u_k,v_k) \to (u,v)$ where $v_k \in Z(u_k)$ for all $k \in \N$ and  $v \in V$, one has $v \in Z(u)$. In this case, we also say that $Z$ is graph-closed.
    \item  $Z$ is called inner semicontinuous at $u \in U$ if for any $v \in Z(u)$, and for any sequence $u_k \to u$, there exists a sequence $v_k \in Z(u_k)$ such that $v_k \to v$.
    \item $Z$ is called continuous if it is both outer and inner semicontinuous.
\end{enumerate}
    
\end{definition}


Let us review these definitions, since they will be used repeatedly in the proofs. First, outer semicontinuity can be expressed as convergence with respect to the excess distance $\D$ \eqref{eq:excess_distance}. If $Z : \R^p \rightrightarrows \R^p$ is outer semicontinuous, then for any $u_k \to u$, $\D\!\bigl(Z(u_k),\, Z(u)\bigr) \xrightarrow[k \to \infty]{} 0$. Equivalently, for every $\varepsilon > 0$ there exists $\delta > 0$ such that $\|u' - u\| \le \eta$ implies $Z(u') \subset Z(u) + \varepsilon \Bar{\B}.$

Let \(\Theta_e := \Theta + e \Bar{\B}\) where $\Theta \subset \R^p$. If \(0 \le \varepsilon \le e\) and the graphical excess distance satisfies
\[
\D\!\bigl(\Graph_{\Theta_e} Z',\, \Graph_{\Theta_e} Z\bigr) \le \sqrt{2}\varepsilon,
\]
then $Z'(\theta) \subset Z(\theta + \varepsilon \Bar{\B}) + \varepsilon \Bar{\B}$ for all $\theta \in \Theta.$ This inclusion is the approximation property used in our main result \Cref{th:limit_subdiff}. Conversely, such a pointwise approximation implies a bound on the graphical excess distance.

In our analysis, the coupling set-valued map is continuous with respect to the marginal distributions, for the weak convergence (or convergence in law).
\begin{proposition}[{\cite[Th 2.3]{ghossoub2021continuity}}]  \label{prop:continuity_couplings} The set-valued map $(\mu,\nu) \rightrightarrows \Pi(\mu,\nu)$ is continuous for the weak convergence of measures.    
\end{proposition}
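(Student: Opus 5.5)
We prove continuity of $(\mu,\nu)\rightrightarrows\Pi(\mu,\nu)$ in the weak topology by treating outer and inner semicontinuity separately. Take sequences $\mu_k\to\mu$ and $\nu_k\to\nu$ weakly on Polish spaces $\X,\Y$.

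\emph{Outer semicontinuity.} Let $\gamma_k\in\Pi(\mu_k,\nu_k)$ with $\gamma_k\to\gamma$ weakly.
\begin{itemize}
\item The projections $(x,y)\mapsto x$ and $(x,y)\mapsto y$ are continuous. Pushforward by a continuous map preserves weak convergence.
\item Hence the first marginal of $\gamma_k$, namely $\mu_k$, converges to the first marginal of $\gamma$. By uniqueness of weak limits, that marginal equals $\mu$.
\item The same argument gives second marginal $\nu$, so $\gamma\in\Pi(\mu,\nu)$.
\end{itemize}
Compactness of values also follows. A weakly convergent sequence is tight by Prokhorov's theorem, so the families $\{\mu_k\}$ and $\{\nu_k\}$ are tight. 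If $K_1,K_2$ are compact sets capturing mass $1-\varepsilon/2$ of the respective marginals, then every coupling gives $K_1\times K_2$ mass at least $1-\varepsilon$. Thus $\bigcup_k\Pi(\mu_k,\nu_k)$ is tight and relatively compact. The same argument with $\mu_k=\mu$, $\nu_k=\nu$ shows $\Pi(\mu,\nu)$ is compact.

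\emph{Inner semicontinuity.} This is the main step. Fix $\gamma\in\Pi(\mu,\nu)$; we must build $\gamma_k\in\Pi(\mu_k,\nu_k)$ with $\gamma_k\to\gamma$. We use a gluing construction.
\begin{enumerate}
\item By Skorokhod-type results, or directly from Strassen's theorem, there exist couplings $\alpha_k\in\Pi(\mu_k,\mu)$ and $\beta_k\in\Pi(\nu,\nu_k)$ with $\alpha_k(d(x',x)>\delta_k)\le\delta_k$ and $\beta_k(d(y,y')>\delta_k)\le\delta_k$, where $\delta_k\to0$. Here $\delta_k$ is (twice) the Lévy–Prokhorov distance, which metrizes weak convergence on Polish spaces.
\item Apply the gluing lemma, via disintegration on Polish spaces, to $\alpha_k$, $\gamma$ and $\beta_k$. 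This gives a law on $\X\times\X\times\Y\times\Y$ for $(x',x,y,y')$ with consecutive pairwise marginals $\alpha_k$, $\gamma$, $\beta_k$.
\item Let $\gamma_k$ be the $(x',y')$-marginal. Then $\gamma_k\in\Pi(\mu_k,\nu_k)$.
\end{enumerate}
To show $\gamma_k\to\gamma$, take $h$ bounded and Lipschitz on $\X\times\Y$. Then
\[
\Bigl|\int h\,\di\gamma_k-\int h\,\di\gamma\Bigr|\le \mathbb E\bigl|h(x',y')-h(x,y)\bigr|\le \operatorname{Lip}(h)\,2\delta_k+2\|h\|_\infty\,2\delta_k\to0.
\]
Bounded Lipschitz test functions determine weak convergence by the Portmanteau theorem, so $\gamma_k\to\gamma$.

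The only delicate points are the existence of the near-diagonal couplings $\alpha_k,\beta_k$ and the gluing itself. Strassen's theorem for the Prokhorov metric supplies the former. Disintegration, valid because the spaces are Polish, supplies the latter.
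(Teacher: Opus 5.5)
The paper does not prove this proposition. It imports it by citation from Ghossoub--Saunders (Th.~2.3), so there is no in-paper argument to compare with. Your proof is a correct, self-contained substitute.

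\emph{Outer part.} Continuity of the projections together with your tightness argument gives more than the paper's own definition of outer semicontinuity, which is just graph-closedness. You show that $\bigcup_k \Pi(\mu_k,\nu_k)\cup\Pi(\mu,\nu)$ is relatively compact. That is upper hemicontinuity with compact values, which is what Berge's maximum theorem actually requires. It is also what the paper implicitly relies on when it extracts weakly convergent subsequences of optimal plans in the stability and graphical-convergence arguments.

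\emph{Inner part.} Near-diagonal couplings followed by the gluing lemma is the standard route, and it is sound. Taking $\delta_k$ as twice the Prokhorov distance in Strassen's theorem removes any need for the infimum to be attained. Alternatively, Skorokhod's representation gives $\alpha_k$ as the law of $(X_k,X)$ with $X_k\to X$ almost surely. Convergence of $d(X_k,X)$ to $0$ in probability is all your bounded-Lipschitz estimate uses.

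\emph{Nonemptiness.} The paper's definition also requires nonempty values. This is immediate from $\mu\otimes\nu\in\Pi(\mu,\nu)$, and you could state it in one line.
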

Particular set-valued maps used in this work include parameterized minimizers and the set of couplings. The following describes semicontinuity properties of parameterized minimizers.

\begin{proposition}[Berge maximum theorem {\cite[17.31 Theorem]{infinite}}] \label{prop:max_theorem} Let $U$ and $V$ be two topological spaces. Let $Z : U \rightrightarrows V$ be a continuous compact-valued map and let $f : \Graph Z  \to \R$ be continuous.  Then 
\begin{enumerate}
    \item $u \mapsto \max_{\gamma \in Z(u)} f(u,\gamma)$ is continuous on $U$.
    \item $u \rightrightarrows \argmax_{\gamma \in Z(u)} f(u,\gamma)$ is outer semicontinuous on $U$.
\end{enumerate}
\end{proposition}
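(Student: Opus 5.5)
The plan is to prove the two items by a sequential compactness argument built on the topological characterizations of continuity and outer semicontinuity, with the set-valued map $Z$ handled through its graph. Throughout, $U$ and $V$ are taken to be metrizable, which is the situation of the paper: $V$ is a space of probability measures with the weak topology on a Polish space, hence metrizable. In a general topological space one would run the same argument with nets instead of sequences. The standing facts are that $Z$ is continuous with compact nonempty values and that $f$ is continuous on $\Graph Z$. Write $m(u) := \max_{\gamma \in Z(u)} f(u,\gamma)$. This maximum is attained because $Z(u)$ is compact, nonempty, and $f(u,\cdot)$ is continuous on it.

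\emph{Upper semicontinuity of $m$.} Fix $u_k \to u$ and choose maximizers $\gamma_k \in Z(u_k)$, so that $m(u_k) = f(u_k,\gamma_k)$. Pass to a subsequence realizing $\limsup_k m(u_k)$. The key step is to show that $(\gamma_k)$ has a further subsequence converging to some $\gamma \in Z(u)$. This follows from upper hemicontinuity with compact values. The set $K := \{u\} \cup \{u_k\}$ is compact, so $Z(K)$ is compact. Alternatively, argue directly: if no subsequence converged into $Z(u)$, then by compactness of $Z(u)$ some open neighbourhood $W \supset Z(u)$ would have $\gamma_k \notin W$ infinitely often, contradicting upper hemicontinuity. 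Graph-closedness (outer semicontinuity) then gives $\gamma \in Z(u)$. By continuity of $f$ on the graph, $\limsup_k m(u_k) = f(u,\gamma) \le m(u)$.

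\emph{Lower semicontinuity of $m$.} Take a maximizer $\gamma^\star \in Z(u)$. By inner semicontinuity there exist $\gamma_k \in Z(u_k)$ with $\gamma_k \to \gamma^\star$. Then $m(u_k) \ge f(u_k,\gamma_k) \to f(u,\gamma^\star) = m(u)$, so $\liminf_k m(u_k) \ge m(u)$. Combining the two halves proves item 1.

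\emph{Item 2.} Let $(u_k,\gamma_k) \to (u,\gamma)$ with $\gamma_k \in \argmax_{Z(u_k)} f(u_k,\cdot)$. Outer semicontinuity of $Z$ gives $\gamma \in Z(u)$. Passing to the limit in $f(u_k,\gamma_k) = m(u_k)$, using continuity of $f$ and item 1, gives $f(u,\gamma) = m(u)$, so $\gamma$ is a maximizer. Hence the argmax map has closed graph. Its values are nonempty and compact, since each is a closed subset of $Z(u)$, which yields outer semicontinuity in the paper's sense.

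\emph{Main obstacle.} The delicate step is extracting a convergent subsequence of maximizers $\gamma_k \in Z(u_k)$ in the first half. The paper's definition of outer semicontinuity only applies once a converging sequence is already given, so one must show that a continuous compact-valued correspondence is locally compact-valued near $u$. I would first try the direct contradiction argument above. If that becomes cumbersome, I would instead cite the standard fact from \cite{infinite} that upper hemicontinuous compact-valued maps send compact sets to compact sets.
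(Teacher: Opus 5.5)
Your outline is the standard proof of Berge's theorem, i.e.\ the argument behind \cite[17.31 Theorem]{infinite}, run with sequences instead of nets. The paper itself gives no proof and only cites that result. Your lower-semicontinuity half of item 1 is fine, and so is the closing part of item 2.

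The problem is the step you flag as the main obstacle. To get $\limsup_k m(u_k)\le m(u)$ you need maximizers $\gamma_k\in Z(u_k)$ to have a subsequence converging into $Z(u)$, and you obtain this from upper hemicontinuity of $Z$. Upper hemicontinuity is part of what ``continuous correspondence'' means in Aliprantis--Border. It is not part of the paper's definition, which is sequential graph-closedness plus inner semicontinuity, and it cannot be derived from it. Your plan to show that such a $Z$ is ``locally compact-valued near $u$'' therefore fails.

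In fact, with the paper's definitions item 1 is false. Take $U=V=\R$, $Z(0)=\{0\}$, $Z(u)=\{0,1/u\}$ for $u\neq 0$, and $f(u,v)=|uv|$. The graph $(\R\times\{0\})\cup\{uv=1\}$ is closed. $Z$ is inner semicontinuous: choose $v_k=0$, or $v_k=1/u_k$ once $u_k\neq 0$. The values are compact and nonempty. Yet $m(0)=0$ while $m(u)=1$ for every $u\neq 0$.

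So upper hemicontinuity has to be an explicit hypothesis, not something to prove. At least relative sequential compactness of $\bigcup_k Z(u_k)$ along every convergent sequence $u_k\to u$ must be assumed. Once it is, your contradiction argument and the rest of item 1 go through. In the paper's application this hypothesis does hold: weakly convergent sequences $\mu_n\to\mu$, $\nu_m\to\nu$ on Polish spaces are tight. Hence the couplings $\Pi(\mu_n,\nu_m)$ are uniformly tight, and relatively compact by Prokhorov's theorem.

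Item 2, by contrast, does not need the problematic half, so you should not route it through full item 1. Suppose $(u_k,\gamma_k)\to(u,\gamma)$ with $\gamma_k$ maximizing $f(u_k,\cdot)$ over $Z(u_k)$. Graph-closedness gives $\gamma\in Z(u)$. For any $\gamma'\in Z(u)$, inner semicontinuity gives $\gamma'_k\in Z(u_k)$ with $\gamma'_k\to\gamma'$. Hence $f(u,\gamma)=\lim_k f(u_k,\gamma_k)\ge\lim_k f(u_k,\gamma'_k)=f(u,\gamma')$. So item 2 holds even under the paper's weaker definitions, whereas item 1 genuinely requires the extra compactness.
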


Indeed, the maximum theorem holds replacing  (arg)max by (arg)min. Furthermore, we may apply it locally and sequentially, by considering the restrictions of maps to sequences and their closure.

Graph \eqref{eq:graph} convergence in excess admits a ``pointwise'' characterization.

\begin{lemma}[{adapted from \cite[Prop. 6.2]{aubin1987graphical}}]
\label{lem:graph_convergence_pointwise_characterization}
Let $\Theta\subset \mathbb{R}^p$ be compact.  
For each $k \in \N$, let $H_k:\mathbb{R}^p \rightrightarrows \mathbb{R}^q$ be outer semicontinuous and compact valued on $\Theta$, and let
$H:\mathbb{R}^p\rightrightarrows \mathbb{R}^q$ be outer--semicontinuous with compact values on $\Theta$. Then the following are equivalent:
\begin{enumerate}
\item $\D(\Graph_\Theta H_k,\ \Graph_\Theta H) \xrightarrow[k \to \infty]{}0$;
\item For any $\theta\in\Theta$ and any sequence $\theta_k \xrightarrow[k \to \infty]{}\theta$, $\D\bigl(H_k(\theta_k),\,H(\theta)\bigr) \xrightarrow[k \to \infty]{} 0.$
\end{enumerate}
\end{lemma}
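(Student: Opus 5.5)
We prove both implications by contradiction and compactness. Throughout, we use the standing conventions: \(\Graph_\Theta H_k\) and \(\Graph_\Theta H\) are compact subsets of \(\Theta\times\R^q\), distances on \(\R^p\times\R^q\) are Euclidean, and \(\D\) is the excess distance \eqref{eq:excess_distance}. Since \(H\) has compact values and is outer semicontinuous on the compact set \(\Theta\), it is locally bounded there. Hence \(\Graph_\Theta H\) is compact.

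\emph{(1) \(\Rightarrow\) (2).} Let \(\theta_k\to\theta\) in \(\Theta\), and suppose (2) fails. Then there exist \(\varepsilon>0\), a subsequence (not relabeled), and points \(h_k\in H_k(\theta_k)\) with \(\operatorname{dist}(h_k,H(\theta))\ge\varepsilon\).

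By (1), there are \((\theta'_k,h'_k)\in\Graph_\Theta H\) with \(\|\theta_k-\theta'_k\|+\|h_k-h'_k\|\to0\). Then \(\theta'_k\to\theta\). Compactness of \(\Graph_\Theta H\) gives a further subsequence with \(h'_k\to h\). Outer semicontinuity of \(H\) at \(\theta\) yields \(h\in H(\theta)\).

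Moreover \(h_k\to h\), so \(\operatorname{dist}(h_k,H(\theta))\to0\). This contradicts the choice of \(h_k\).

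\emph{(2) \(\Rightarrow\) (1).} Suppose (1) fails. Then, along a subsequence, there exist \((\theta_k,h_k)\in\Graph_\Theta H_k\) with \(\operatorname{dist}\bigl((\theta_k,h_k),\Graph_\Theta H\bigr)\ge\varepsilon\) for some \(\varepsilon>0\).

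By compactness of \(\Theta\), pass to a subsequence with \(\theta_k\to\theta\in\Theta\). By (2), \(\D(H_k(\theta_k),H(\theta))\to0\), so there exist \(h'_k\in H(\theta)\) with \(\|h_k-h'_k\|\to0\). Then \((\theta,h'_k)\in\Graph_\Theta H\), and
\[
\operatorname{dist}\bigl((\theta_k,h_k),\Graph_\Theta H\bigr)\le \|\theta_k-\theta\|+\|h_k-h'_k\|\to0.
\]
This is a contradiction.

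\emph{Main obstacle.} The only delicate point is the compactness of \(\Graph_\Theta H\) used in the first direction. It requires local boundedness of \(H\) on \(\Theta\).

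If the intended setting does not guarantee local boundedness, one can avoid it. We would use compactness of \(H(\theta)\) together with outer semicontinuity in the excess form recalled above: for \(\theta'_k\to\theta\), one has \(\D(H(\theta'_k),H(\theta))\to0\). This gives \(\operatorname{dist}(h'_k,H(\theta))\to0\) directly, and hence \(\operatorname{dist}(h_k,H(\theta))\to0\), with no need to extract a convergent subsequence of \((h'_k)\).

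The outer semicontinuity of the \(H_k\) is not needed in either direction. It would only ensure that the graphs \(\Graph_\Theta H_k\) are closed, so that the excess distance is attained.
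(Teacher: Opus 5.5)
The paper does not prove this lemma; it only cites \cite[Prop.~6.2]{aubin1987graphical}. Your contradiction-and-subsequence argument is a valid self-contained replacement. The direction (2)$\Rightarrow$(1) is correct and uses no semicontinuity at all: it only needs $\operatorname{dist}(h_k,H(\theta))\le \D(H_k(\theta_k),H(\theta))$ and compactness of $H(\theta)$. You are also right that outer semicontinuity of the $H_k$ plays no role. Two bookkeeping points:
\begin{enumerate}
\item In (2)$\Rightarrow$(1), your $\theta_k$ are defined only along a subsequence $(k_j)$. Item (2) is stated for full sequences, so pad by setting $\theta_k:=\theta$ for $k\notin\{k_j\}$ before invoking it.
\item Item (2) must be read with $\theta_k\in\Theta$, as you do, since $\Graph_\Theta H_k$ carries no information about $H_k$ off $\Theta$.
\end{enumerate}

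The step that needs correcting is your justification that $\Graph_\Theta H$ is compact. Suppose outer semicontinuity means only graph-closedness, as in the paper's formal definition. Then compact values on a compact $\Theta$ do not give local boundedness. Take $p=q=1$, $\Theta=[-1,1]$, $H(\theta)=\{1/\theta\}$ for $\theta\neq 0$ and $H(0)=\{0\}$. This map has a closed graph and compact values but is unbounded near $0$. Moreover, with $H_k:=H$ for all $k$, item (1) holds trivially while $\D(H_k(1/k),H(0))=k$. So (1)$\Rightarrow$(2) is false under graph-closedness alone.

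Consequently your fallback does not ``avoid'' local boundedness. For compact-valued maps, the excess property $\D(H(\theta'),H(\theta))\to 0$ as $\theta'\to\theta$ in $\Theta$ is equivalent to closed graph plus local boundedness. The paper records this property right after its definition, and it is exactly the hypothesis the lemma needs. Under that reading, both of your arguments for (1)$\Rightarrow$(2) are complete:
\begin{enumerate}
\item The fallback works directly.
\item The main argument also works, because the excess property and a finite cover of $\Theta$ make $H$ bounded on $\Theta$ and $\Graph_\Theta H$ closed, hence compact.
\end{enumerate}
Finally, your ``standing convention'' that $\Graph_\Theta H_k$ is compact is not justified by the hypotheses and is never used, so you can drop it.
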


We will also use the following stability property of graphical convergence under sum, and the stability of zero sets.
\begin{lemma}[Graphical convergence of a sum] \label{lem:graphical_cv_sum} Let $\Theta \subset \R^p$ be compact. Let $F : \R^p \rightrightarrows \R^q$ and  $G : \R^p \rightrightarrows \R^q$ be graph closed and locally bounded.  Let $(F_n)_{n \in \N}$ and $(G_n)_{n \in \N}$ be sequences of set-valued maps such that 
\begin{equation*}
    \D(\Graph_{\Theta} F_n, \Graph F) \xrightarrow[n \to \infty]{} 0 \text{ and }  \D(\Graph_{\Theta} G_n , \Graph G) \xrightarrow[n \to \infty]{} 0.
\end{equation*}
Then $\D(\Graph_\Theta (F_n + G_n), \Graph_\Theta (F + G)) \xrightarrow[n \to \infty]{} 0$.
\end{lemma}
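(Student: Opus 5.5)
We prove the statement via the pointwise characterization of \Cref{lem:graph_convergence_pointwise_characterization}. First we check its hypotheses. If $F,G$ are graph closed and locally bounded, then $F+G$ is graph closed and locally bounded on the compact set $\Theta$. We need to be slightly careful, because the hypothesis compares $\Graph_\Theta F_n$ with the full $\Graph F$ rather than $\Graph_\Theta F$. We therefore work directly with sequences, as follows.

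Take $(\theta_n,z_n)\in\Graph_\Theta(F_n+G_n)$ and write $z_n=a_n+b_n$ with $a_n\in F_n(\theta_n)$ and $b_n\in G_n(\theta_n)$. By hypothesis there exist $(\theta'_n,a'_n)\in\Graph F$ with $\|\theta_n-\theta'_n\|+\|a_n-a'_n\|\le \delta_n\to0$. Similarly there exist $(\theta''_n,b'_n)\in\Graph G$ with $\|\theta_n-\theta''_n\|+\|b_n-b'_n\|\le\delta_n$.

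Suppose the claim fails. Then there are $\varepsilon>0$ and a subsequence with $\operatorname{dist}((\theta_n,z_n),\Graph_\Theta(F+G))\ge\varepsilon$. Since $\theta'_n,\theta''_n$ lie within $\delta_n$ of the compact set $\Theta$, they eventually lie in the compact set $\Theta+\overline{\B}$. Local boundedness of $F$ and $G$ on this set then bounds $a'_n$ and $b'_n$, and hence also $a_n$ and $b_n$. Extracting a further subsequence, $\theta_n\to\theta\in\Theta$, $a_n\to a$ and $b_n\to b$. Then $\theta'_n\to\theta$ and $a'_n\to a$, so graph closedness of $F$ gives $a\in F(\theta)$; likewise $b\in G(\theta)$. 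Thus $(\theta,a+b)\in\Graph_\Theta(F+G)$, while $(\theta_n,z_n)\to(\theta,a+b)$. This contradicts the distance being at least $\varepsilon$.

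Finally, we pass from sequences to the uniform statement. Since the excess $\D(\Graph_\Theta(F_n+G_n),\Graph_\Theta(F+G))$ is a supremum, if it did not tend to zero we could pick near-maximizing points $(\theta_n,z_n)$ along a subsequence with distance at least $\varepsilon$. The previous paragraph rules this out. The main obstacle is the compactness step: the boundedness of $a_n$ and $b_n$ comes only from local boundedness of the limits on a neighborhood of $\Theta$ combined with the graphical approximation. That is why the hypothesis allows the approximating graph points to lie in $\Graph F$ near $\Theta$.
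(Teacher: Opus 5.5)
Your proof is correct and follows essentially the same route as the paper's: write $z_n=a_n+b_n$, extract convergent subsequences, and use graph closedness of $F$ and $G$ to place the limit in $\Graph_\Theta(F+G)$. You are more explicit than the paper on two points: where the boundedness of $a_n,b_n$ comes from (local boundedness of $F,G$ near $\Theta$, transferred through the excess hypothesis to the full graphs), and the contradiction step that turns sequential limits into convergence of the excess. The opening appeal to the pointwise characterization lemma is never actually used, and its hypotheses on $F_n,G_n$ are not available here, so it can be dropped.
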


\begin{proof}
Since \(F\) and \(G\) are graph closed and locally bounded, \(\Graph_\Theta F\) and \(\Graph_\Theta G\) are compact. Let \((\theta_n,h_n)\in\Graph_\Theta(F_n+G_n)\) with \((\theta_n,h_n)\to(\theta,h)\), \(\theta\in\Theta\).
Then \(h_n=f_n+g_n\) for some \(f_n\in F_n(\theta_n)\), \(g_n\in G_n(\theta_n)\).
By compactness, up to subsequences \(f_n\to f\) and \(g_n\to g\).
Graph closedness implies \((\theta,f)\in\Graph_\Theta F\) and \((\theta,g)\in\Graph_\Theta G\), hence
\(h=f+g\in(F+G)(\theta)\). Thus \( \D \left(\Graph_\Theta(F_n+G_n),\Graph_\Theta(F+G)\right) \xrightarrow[n \to \infty]{} 0\).
\end{proof}

\begin{lemma}[{\cite[Th. 5.37]{Rockafellar_1998}}]\label{lem:critical_set_outer}
Let $\Theta\subset\R^p$ be compact and let $F:\R^p\rightrightarrows\R^q$ have a closed graph.
Let $(F_n)_{n\in\N}$ be such that $\D(\Graph_\Theta F_n,\Graph_\Theta F)\xrightarrow[n\to\infty]{}0.$
Set $F^{-1}(0)=\{\theta\in\Theta:\ 0\in F(\theta)\}$ and
$F_n^{-1}(0)=\{\theta\in\Theta:\ 0\in F_n(\theta)\}$.
Then $\D\big(F_n^{-1}(0),\,F^{-1}(0)\big)\xrightarrow[n\to\infty]{}0.$
\end{lemma}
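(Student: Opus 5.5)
The statement to prove is \Cref{lem:critical_set_outer}: if the graphs of $F_n$ converge in excess to the graph of a closed-graph map $F$ on the compact set $\Theta$, then the zero sets converge in excess. I will argue by contradiction using compactness of $\Theta$ and the closedness of $\Graph F$.

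Suppose the conclusion fails. Then there are $\varepsilon>0$, a subsequence (not relabeled), and points $\theta_n\in F_n^{-1}(0)$ with $\operatorname{dist}(\theta_n,F^{-1}(0))\ge\varepsilon$. If $F^{-1}(0)$ is empty, we adopt the convention that the distance is $+\infty$, so this case is included. By compactness of $\Theta$, we may extract a further subsequence with $\theta_n\to\bar\theta\in\Theta$. Since $0\in F_n(\theta_n)$, the pair $(\theta_n,0)$ lies in $\Graph_\Theta F_n$.

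By the hypothesis $\D(\Graph_\Theta F_n,\Graph_\Theta F)\to0$, there exist $(\theta_n',h_n')\in\Graph_\Theta F$ with $\|\theta_n-\theta_n'\|+\|h_n'\|\to0$. Hence $\theta_n'\to\bar\theta$ and $h_n'\to0$. Because $\Graph F$ is closed, $(\bar\theta,0)\in\Graph F$, and since $\bar\theta\in\Theta$ this gives $\bar\theta\in F^{-1}(0)$. Then $\operatorname{dist}(\theta_n,F^{-1}(0))\le\|\theta_n-\bar\theta\|\to0$, which contradicts the lower bound $\varepsilon$. Therefore $\D(F_n^{-1}(0),F^{-1}(0))\to0$.

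There is one point of care. The excess distance \eqref{eq:excess_distance} was defined for compact sets, while $F^{-1}(0)$ may be empty or $F_n^{-1}(0)$ may fail to be closed. When $F_n^{-1}(0)$ is empty, the supremum is $0$ by convention, so nothing needs proving. The argument only uses the definition of $\D$ as a supremum of distances and does not require the sets to be compact. Beyond this, the only substantive ingredient is the closedness of the graph, and I do not expect any real obstacle.
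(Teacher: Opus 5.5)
Your proof is correct. The paper does not prove this lemma; it cites \cite[Th.~5.37]{Rockafellar_1998} and so relies on the general theory of graphical convergence. The natural route through that theory is in two steps. First, since $\Graph F_n^{-1}$ and $\Graph F^{-1}$ are the transposed graphs, graphical convergence carries over to the inverse maps, so the outer limit of the zero sets $F_n^{-1}(0)$ lies in $F^{-1}(0)$. Second, because all these sets lie in the compact set $\Theta$, this Painlev\'e--Kuratowski inclusion upgrades to convergence in excess.

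Your argument compresses both steps into one sequential compactness argument: extract $\theta_n\to\bar\theta$, approximate $(\theta_n,0)$ by points of $\Graph_\Theta F$, and close the graph. This makes the lemma self-contained and shows exactly what is used, namely closedness of $\Graph F$ and compactness of $\Theta$. It needs neither local boundedness nor compact values of $F$, which matters in this paper because the lemma is later applied to maps containing the unbounded normal cone $N_\Theta$. Your explicit conventions for empty or non-closed zero sets also cover a gap, since the paper defines $\D$ only for compact sets.
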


We state a Jensen-type inequality for the excess distance. The corresponding formula is classical in the discrete (finite-sum) setting; see, for instance, \cite{shapiro}. We extend it below to the case of an arbitrary measure.
\begin{lemma}[Jensen for excess distance] \label{lem:interchange_excess_dist_integral} Let $(S, \mathcal{A}, \mu)$ be a measure space. Let $F : S \rightrightarrows \R^p$ and $G : S \rightrightarrows \R^p$ be measurable and convex compact and non-empty valued. Then
\begin{equation*}
    \D \left(\int_S F(s) \di \mu(s), \int_S G(s) \di \mu(s) \right) \leq \int_S \D(F(s), G(s))\di \mu(s)  
\end{equation*}
\end{lemma}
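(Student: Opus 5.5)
The plan is to reduce the statement to a pointwise inequality between support functions. Recall that for nonempty convex compact sets $A,B\subset\R^p$ the excess distance is characterized by
\[
\D(A,B)=\sup_{\|u\|\le 1}\bigl(\sigma_A(u)-\sigma_B(u)\bigr)^+ ,
\]
where $\sigma_A(u)=\max_{a\in A}\langle u,a\rangle$. This identity follows from the Hahn--Banach separation theorem. Indeed, $\D(A,B)\le r$ holds iff $A\subset B+r\Bar{\B}$, which holds iff $\sigma_A\le\sigma_B+r\|\cdot\|$.

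We interpret the integral of a set-valued map in the Aumann sense, as the set of integrals of integrable measurable selections. Integrability is implicitly assumed, for instance through an integrable bound on $\|F(s)\|$ and $\|G(s)\|$; otherwise both sides may be infinite and there is nothing to prove. Under these conditions the Aumann integral of a convex compact-valued map is convex and compact, and its support function satisfies
\[
\sigma_{\int F\,\di\mu}(u)=\int_S\sigma_{F(s)}(u)\,\di\mu(s).
\]
This interchange is standard. One inequality is immediate by integrating pointwise the bound $\langle u,f(s)\rangle\le\sigma_{F(s)}(u)$ along a selection $f$. For the reverse inequality, we choose a measurable selection of $\argmax_{a\in F(s)}\langle u,a\rangle$, whose existence is guaranteed by the Kuratowski--Ryll-Nardzewski theorem applied to the measurable compact-valued argmax map.

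With these two facts in hand, fix $u$ with $\|u\|\le1$. Then
\[
\sigma_{\int F}(u)-\sigma_{\int G}(u)=\int_S\bigl(\sigma_{F(s)}(u)-\sigma_{G(s)}(u)\bigr)\di\mu(s)\le\int_S\D(F(s),G(s))\,\di\mu(s).
\]
The last inequality uses the characterization above pointwise in $s$. Taking the supremum over $u$ then yields the claim.

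If one prefers to avoid support functions, a direct route is also available. Take any integrable selection $f$ of $F$, and let $g(s)$ be the metric projection of $f(s)$ onto $G(s)$. This projection is measurable and is an integrable selection of $G$. Then
\[
\operatorname{dist}\Bigl(\int f,\int G\Bigr)\le\Bigl\|\int (f-g)\Bigr\|\le\int\operatorname{dist}(f(s),G(s))\le\int\D(F(s),G(s)).
\]
This route is simpler, and does not even need convexity. I would present this version, noting that convexity and compactness ensure the integrals are nonempty and the projection is well defined.

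The main technical point is the measurability of the projection selection $s\mapsto\proj_{G(s)}(f(s))$. It follows because $(s,x)\mapsto\operatorname{dist}(x,G(s))$ is Carathéodory, so the argmin map is measurable with compact values; convexity of $G(s)$ makes it single-valued. Integrability of this selection follows from $\|g(s)\|\le\|G(s)\|$ together with the integrable bound on $G$. The measurability of $s\mapsto\D(F(s),G(s))$ is justified in the same way, by writing it as a supremum over a countable dense family of selections (Castaing representation).
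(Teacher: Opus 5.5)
Your proof is correct, and the version you say you would present takes a different route from the paper.

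\textbf{Your first sketch.} This is essentially the paper's argument. The paper writes $\D(\int F,\int G)=\sup_{\|u\|\le 1}\max(h_{\int F}(u)-h_{\int G}(u),0)$. It then interchanges support function and integral through a measurable selection of the argmax (maximum selection theorem). Finally it moves the positive part and the supremum inside the integral by Jensen. You instead bound $\sigma_{F(s)}(u)-\sigma_{G(s)}(u)\le\D(F(s),G(s))$ pointwise and take the positive part at the end, which amounts to the same thing.

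\textbf{The projection route.} The version through the nearest-point selection $g(s)=\proj_{G(s)}(f(s))$ is a genuinely different, primal argument. It does without Hahn--Banach duality and without the support-function characterization of $\D$, which needs $\int G$ to be closed and convex. It also avoids the identity $\sigma_{\int F}=\int\sigma_{F(s)}\,\di\mu(s)$. It uses only two facts: $\int g$ lies in the Aumann integral of $G$, and $\|\int v\,\di\mu\|\le\int\|v\|\,\di\mu$. You are right that convexity is not needed for the inequality, provided you select from the compact-valued argmin instead of relying on uniqueness of the projection. What the paper's dual route buys in exchange is the exact formula for the support function of an Aumann integral. That is more information than this lemma requires.

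\textbf{Shared ingredients.} Both routes rest on the same implicit integrable-boundedness hypothesis on $F$ and $G$, which the paper also leaves unstated. Both also rely on the same measurable maximum theorem. For your projection, phrase the measurability step as an application of that theorem to the Carath\'eodory function $(s,y)\mapsto-\|f(s)-y\|$ with constraint correspondence $G$. Appealing to the Carath\'eodory property of $(s,x)\mapsto\operatorname{dist}(x,G(s))$ alone does not give you the argmin. Your Castaing-representation argument for the measurability of $s\mapsto\D(F(s),G(s))$ also settles a point the paper passes over.
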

\begin{proof} We use the support function representation. Let $h_A(u) := \max_{a \in A} \langle a,u\rangle$ for any convex compact set $A$. Then we have
\begin{equation*}
    \D \left( \int_S F \di \mu , \int_S G \di \mu \right)  = \sup_{\|u\| \leq 1} \max ( h_{\int_S F \di \mu}(u) - h_{\int_S G \di \mu}(u), 0).
\end{equation*}
By the maximum selection theorem \cite[18.19]{infinite} we may find a measurable selection $f^*(s) \in \argmax \langle F(s), u \rangle$, hence  we can easily verify that $h_{\int_S F \di \mu}(u) = \int_S h_{F(s)}(u)\di \mu(s)$. Thus, using the same property on $G$, we can write
\begin{align*}
     \sup_{\|u\| \leq 1} \max ( h_{\int_S F \di \mu}(u) - h_{\int_S G \di \mu}(u), 0) & \leq \sup_{\|u\|\leq 1} \max \left(\int_S \left[h_{F(s)}(u) - h_{G(s)}(u) \right]\di \mu , 0 \right) \\
     & \leq \sup_{\|u\| \leq 1} \int_S \max (h_{F(s)}(u) - h_{G(s)}(u), 0) \di \mu(s) \\   
     & \leq \int_S \D(F(s), G(s)) \di \mu(s).
\end{align*}
where the second and last inequalities are obtained by Jensen inequality.
\end{proof}


    

\subsection{Limit of integrals against subsets of measures}
\label{subsection:limit_integrals}
We now expose continuity properties of integrals $\int_S h \,\di\gamma$ with respect to the measure \(\gamma\). The results accommodate parameterized integrands
\(h : \Theta \times S \to \R^p\) and do not require boundedness on the sample space.
We first treat the single-valued case in \Cref{lem:semi_continuity_singleton}, which
constitutes a minor adaptation of elementary results (see, e.g.,
\cite[Lemma 5.1.7]{ambrosio2005gradient}) and allows us to handle discontinuous
integrands, such as those arising in spectral risk measures. We then extend the
analysis to set-valued integrands in \Cref{lem:semicontinuity_set_integrals}.
Finally, these results are combined to give
\Cref{prop:graph_convergence_parameterized_integrals} to establish graphical
convergence of the corresponding parameterized integral mappings.

\begin{lemma}[Continuity on uniformly integrable measures]
\label{lem:semi_continuity_singleton}
Let $P \subset \mathcal{P}(S)$ and $\gamma \in P$. 
Let $h : S \to \R^p$ be a measurable function.
Assume the following:
\begin{enumerate}
    \item There exists a measurable set $E \subset S$ with $\gamma(E) = 1$ such that the restriction $h$ is continuous on $E$.
    \item There exists a measurable function $\psi : S \to [0,\infty)$ such that 
    $\|h(s)\| \leq \psi(s)$ for all $s \in S$ and there exists $\eta >0$ such that $\sup_{\gamma' \in P} \int_S \psi^{1+ \eta}\,\mathrm{d}\gamma' < \infty.$
\end{enumerate}
Then for any sequence $(\gamma_k)_{k \in \N}$ from $P$ converging weakly to $\gamma$, 
$\int_S h \,\mathrm{d} \gamma_{k} \xrightarrow[k \to \infty]{} \int_S h \,\mathrm{d} \gamma.$
\end{lemma}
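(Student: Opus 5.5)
The plan is to combine a continuous-mapping argument with truncation, in the spirit of \cite[Lemma 5.1.7]{ambrosio2005gradient}. We may work componentwise, so assume $p=1$.

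\textbf{Step 1 (bounded part).} Fix $M>0$ and set $T_M(t):=\max(-M,\min(M,t))$. Then $T_M\circ h$ is bounded, measurable, and continuous at every point of $E$. Since $\gamma(E)=1$, its set of discontinuity points is $\gamma$-null. The generalized continuous mapping theorem (portmanteau for functions continuous $\gamma$-a.e.) then gives
\[
\int_S T_M\circ h\,\di\gamma_k \to \int_S T_M\circ h\,\di\gamma .
\]
One subtlety: $E$ need not be open here, but continuity of the restriction $h|_E$ does not by itself give continuity of $h$ at points of $E$ in $S$. I would read assumption 1 as continuity of $h$ at each point of $E$, which is what the paper's applications use, since there $E$ is open. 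If only $h|_E$ is continuous, I would first modify the argument via Skorokhod representation: realize $\gamma_k$ and $\gamma$ as laws of $X_k\to X$ almost surely, with $X\in E$ almost surely. This still needs continuity at points of $E$ relative to $S$, so the natural reading above is the one to adopt.

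\textbf{Step 2 (uniform tail control).} For $\gamma'\in P$,
\[
\int_S |h-T_M\circ h|\,\di\gamma' \le \int_{\{\psi>M\}} \psi\,\di\gamma' \le M^{-\eta}\int_S \psi^{1+\eta}\,\di\gamma' \le M^{-\eta} L,
\]
where $L:=\sup_{\gamma'\in P}\int_S\psi^{1+\eta}\,\di\gamma'<\infty$. The same bound holds for $\gamma$ itself, since $\gamma\in P$. This also shows that $h$ is integrable for every measure in $P$.

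\textbf{Step 3 (conclusion).} Split the difference as
\[
\Bigl|\int_S h\,\di\gamma_k-\int_S h\,\di\gamma\Bigr| \le 2M^{-\eta}L+\Bigl|\int_S T_M\circ h\,\di\gamma_k-\int_S T_M\circ h\,\di\gamma\Bigr|.
\]
Take the $\limsup$ in $k$ using Step 1, then let $M\to\infty$.

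\textbf{Main obstacle.} The only delicate point is Step 1: justifying the continuous mapping theorem when $h$ is continuous only on a full-measure set rather than everywhere. I would handle this with the portmanteau argument applied to $\{T_M\circ h\le t\}$, whose boundary is contained in $\{h=t\}\cup(S\setminus E)$, up to closure issues. Alternatively, I would cite the standard result (Billingsley, Theorem 2.7) directly. Step 2 is routine thanks to the uniform $(1+\eta)$-moment bound.
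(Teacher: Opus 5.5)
Your proposal is correct and is essentially the paper's proof. Both arguments reduce componentwise, treat the truncation $h_M$ with the portmanteau theorem for bounded $\gamma$-a.e.\ continuous functions, and control the tails uniformly through the $(1+\eta)$-moment bound; your explicit estimate $\int_{\{\psi>M\}}\psi\,\di\gamma'\le M^{-\eta}\sup_{\gamma''\in P}\int_S\psi^{1+\eta}\,\di\gamma''$ simply replaces the paper's citation of the de la Vallée Poussin criterion. You read hypothesis 1 as continuity of $h$ at each point of $E$, which is automatic when $E$ is open as in all the applications; this is exactly what the paper's proof tacitly uses when it says the discontinuities of $h_M$ lie in $E^c$, and it is genuinely needed, since continuity of $h|_E$ alone fails (take $h=\mathds{1}_{\mathbb{Q}}$ on $[0,1]$, $E$ the irrationals, $\gamma$ Lebesgue and $\gamma_k$ uniform on $\{1/k,\dots,1\}$).
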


\begin{proof}
    We prove the result for $h : S \to \R$; the multivariate case follows by applying it componentwise. Let $(\gamma_k)_{k \in \N}$ be a sequence from $P$ converging weakly to $\gamma$.
For each $M > 0$, consider the continuous and thresholded function $h_M := \max\{-M,\min(h,M)\}.$ The only discontinuities of $h_M$ lie in $E^c$, which has $\gamma$-measure zero. Hence, by Portmanteau theorem (see, e.g., \cite[Th.~3.2.10]{durrett2010probability}), 

\begin{equation}
    \label{eq:limit_h_M_k}
    \int_S h_M \,\di\gamma_k 
    \xrightarrow[k \to \infty]{} 
    \int_S h_M \,\di \gamma.
\end{equation}
Note that 
\begin{equation}
    \label{eq:h_m_and_psi}
    |h-h_M| = \max \{|h|-M, 0\} \le |h|\,\mathds{1}_{\{|h|>M\}} \leq \psi \mathds{1}_{\{\psi > M\}}.
\end{equation}
Condition 2 and  De la Vallée Poussin criterion, see e.g. \cite[Th.~22, p.~19]{dellacherie2011probabilities}, gives uniform integrability $\lim_{M \to \infty}\sup_{\gamma' \in P} \int_S \psi(s)\,\mathds{1}_{\{\psi > M\}}(s)\di\gamma'(s) = 0$. 

Now take an arbitrary $\varepsilon > 0$. We may fix $M > 0$ such that 
$|\int_S h_M \,\di\gamma_k -
    \int_S h_M \,\di \gamma| \leq \varepsilon$ and $\sup_{\gamma' \in P} \int_S \psi(s)\,\mathds{1}_{\{\psi > M\}}(s)\di\gamma'(s) \leq \varepsilon$. Thus the limit \eqref{eq:limit_h_M_k} combined with \eqref{eq:h_m_and_psi} and uniform integrability gives $K \geq 0$ such that for all $k \geq K$,
\begin{align*}
\left|\int_S h\,\mathrm{d}\gamma_k - \int_S h\,\mathrm{d}\gamma\right|
&\le \left|\int_S h_M\,\mathrm{d}\gamma_k - \int_S h_M\,\mathrm{d}\gamma\right|
    + \int_S |h-h_M|\,\mathrm{d}\gamma_k
    + \int_S |h-h_M|\,\mathrm{d}\gamma \\
&\le 3 \varepsilon.
\end{align*}\end{proof}

\begin{lemma}
\label{lem:semicontinuity_set_integrals}
Let \( h : S \to \mathbb{R}^p \) be a measurable function, and let \( \{P_k\}_{k \ge 1}, P \subset \mathcal{P}(S) \) be weakly compact sets such that for any converging sequence $\gamma_k \in P_k$, $\gamma_k \to \gamma$, we have $\gamma \in P$. Assume:
\begin{enumerate}
    \item $h$ is continuous on a measurable subset $E \subset S$ which has full \( \gamma \)-measure for every \( \gamma \in P. \)
    \item There exists a measurable function \( \psi : S \to [0,\infty) \) such that $\|h(s)\| \leq \psi(s)$ for all $s \in S$ and  there exists $\eta >0$ such that $\sup_{\gamma' \in (\cup_{k \ge 1} P_k) \cup P} \int_S \psi^{1 + \eta} \, d\gamma' < \infty$.
\end{enumerate}

For all $k \geq 0$, if $H_k := \left\{ \int_S h \, d\gamma : \gamma \in P_k \right\}$ and $
H := \left\{ \int_S h \, d\gamma : \gamma \in P \right\}$, then $$\mathbf{D}(H_k, H) \xrightarrow[k \to \infty]{} 0.$$
\end{lemma}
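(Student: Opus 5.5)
I argue by contradiction, combined with the weak compactness of the sets $P_k$ and the single-valued continuity result \Cref{lem:semi_continuity_singleton}. Suppose $\D(H_k,H)\not\to 0$. Then there exist $\varepsilon>0$ and a subsequence, still indexed by $k$, such that $\D(H_k,H)>\varepsilon$ for all $k$.

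First I justify that the excess distance is attained. The map $\gamma\mapsto\int_S h\,d\gamma$ is continuous on each weakly compact $P_k$: this is \Cref{lem:semi_continuity_singleton} applied with the family $P_k$, whose hypotheses hold by item 2. Care is needed here, because item 1 only guarantees that $E$ has full measure for limits lying in $P$, not for elements of $P_k$. To avoid this issue I do not use attainment. Instead, for each $k$ I pick $\gamma_k\in P_k$ with $\operatorname{dist}\bigl(\int_S h\,d\gamma_k,H\bigr)\ge\varepsilon$; such a $\gamma_k$ exists by the definition of the supremum in \eqref{eq:excess_distance}.

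Next I extract a convergent subsequence. The family $(\gamma_k)$ lies in $\bigcup_k P_k$. By item 2, $\psi^{1+\eta}$ has uniformly bounded integrals, but this alone does not give tightness on a general Polish space. If tightness of $\bigcup_k P_k$ is available, Prokhorov's theorem gives a weakly convergent subsequence $\gamma_{k_j}\to\gamma$, and the closure hypothesis puts $\gamma\in P$. This extraction is the step I expect to be the main obstacle: the statement only gives compactness of each $P_k$ separately. I would read the hypothesis "any converging sequence has its limit in $P$" together with a relative compactness of $\bigcup_k P_k$, which holds in every application (marginals converging weakly). Concretely, in the application $\Pi(\mu_n,\nu_m)$ is tight because the marginals are, so I would state and use this property, or note that it follows from the setup.

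Once $\gamma_{k_j}\to\gamma\in P$, I apply \Cref{lem:semi_continuity_singleton} with the family $(\bigcup_k P_k)\cup P$ and the limit point $\gamma$. Item 1 gives continuity of $h$ on $E$ with $\gamma(E)=1$, since $\gamma\in P$, and item 2 gives the uniform $(1+\eta)$-moment bound. Hence $\int_S h\,d\gamma_{k_j}\to\int_S h\,d\gamma\in H$. Consequently $\operatorname{dist}\bigl(\int_S h\,d\gamma_{k_j},H\bigr)\le\bigl\|\int_S h\,d\gamma_{k_j}-\int_S h\,d\gamma\bigr\|\to0$, which contradicts the lower bound $\varepsilon$. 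Therefore $\D(H_k,H)\to0$.
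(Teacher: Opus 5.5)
Your argument is the same as the paper's: take representing plans $\gamma_k\in P_k$, extract a weakly convergent subsequence with limit in $P$, and conclude with \Cref{lem:semi_continuity_singleton} applied to $Q=(\bigcup_k P_k)\cup P$. Arguing by contradiction directly on the measures also makes the passage to $\D(H_k,H)\to0$ explicit, which the paper's ``limits of $u_k\in H_k$ lie in $H$'' formulation leaves implicit.

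The extraction step you flag is exactly where the paper just writes ``by assumption, $Q$ is sequentially compact'', and your doubt is justified. On $S=\R$ with $P_k=\{\delta_k\}$, $P=\{\delta_0\}$, $h(s)=\min(|s|,1)$ and $\psi\equiv1$, every stated hypothesis holds (the closure condition vacuously), yet $\D(H_k,H)=1$. So tightness of $\bigcup_k P_k$ has to be added, together with the closure condition read along subsequences, in the form \Cref{prop:parametric_stability} supplies it. Both hold wherever the lemma is used in the paper, since the marginals there converge weakly.
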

\begin{proof}
Remark that by assumption, $Q := \left(\bigcup_{k \geq 0}  P_k \right)\cup P$ is sequentially compact for the weak convergence. Hence we may show that for any convergent sequence $u_k \to u$,  such that for all $k \in \N$, $u_k \in H_k$, we have $u \in H$. For such a sequence $(u_k)_{k \in \N}$, for each $k \in \N$, there exists $\gamma_k \in P_k$ satisfying $u_k = \int_S h \di \gamma_k$. Up to a subsequence, we may assume $(\gamma_k)_{k \in \N}$ weakly converges to some $\gamma \in P$. Hence the previous \Cref{lem:semi_continuity_singleton} applied to $Q$ gives the limit $\int_S h \di \gamma_k \xrightarrow[k \to \infty]{}\int_S h \di \gamma$, hence $u = \int_S h \di \gamma \in H$.
\end{proof}

\bigskip

With these lemmas in place, we now prove the graphical convergence result.

\begin{proposition}[Graph convergence for parameterized set of integrals]
\label{prop:graph_convergence_parameterized_integrals}
Let $\Theta \subset \R^p$ be a compact subset, and let $h : \Theta \times S \to \R^p$ be measurable. Let $\{P_k\}_{k \ge 1}, P \subset \mathcal{P}(S)$ be compact sets (for the weak convergence) such that for any converging sequence $\gamma_k \in P_k$, $\gamma_k \to \gamma$, we have $\gamma \in P$. Assume the following.
\begin{enumerate}
    \item There exists a set $E \subset S$ of full $\gamma$-measure for each $\gamma \in P$, such that $h$ is continuous on $\Theta \times E$.
    \item There exists a measurable function $\psi : S \to \R_+$ such that $\|h(\theta, s)\| \le \psi(s)$ for all $(\theta, s) \in \Theta \times S$,  and  there exists $\eta >0$ such that $\sup_{\gamma' \in (\cup_{k \ge 1} P_k) \cup P} \int_S \psi^{1 + \eta} \, d\gamma' < \infty$.
\end{enumerate}

For all $k \geq 0$, define $$H_k(\theta) := \left\{ \int_S h(\theta, s)\, d\gamma(s) : \gamma \in P_k \right\} \qquad \text{and}\qquad H(\theta) := \left\{ \int_S h(\theta, s)\, d\gamma(s) : \gamma \in P \right\}.$$ Then 
$\mathbf{D}(\Graph_\Theta H_k, \Graph_\Theta H) \xrightarrow[k \to \infty]{} 0$.
\end{proposition}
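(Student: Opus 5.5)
The plan is to reduce the statement to the pointwise characterization of \Cref{lem:graph_convergence_pointwise_characterization}, and then to repeat the argument of \Cref{lem:semicontinuity_set_integrals} on the product space $\Theta\times S$. Concretely, we will show that for every $\theta\in\Theta$ and every sequence $\theta_k\to\theta$ in $\Theta$, $\D(H_k(\theta_k),H(\theta))\to 0$. Before that we must check the hypotheses of the lemma. Compactness of $H_k(\theta)$ and $H(\theta)$ holds because each is the image of a weakly compact set of measures under $\gamma\mapsto\int_S h(\theta,\cdot)\,\di\gamma$. Outer semicontinuity in $\theta$ of $H_k$ and $H$ holds because for fixed $\gamma$ and $\theta_j\to\theta$, dominated convergence (with dominating function $\psi$, integrable since $\psi^{1+\eta}$ is) gives $\int h(\theta_j,\cdot)\,\di\gamma\to\int h(\theta,\cdot)\,\di\gamma$. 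Combining this with compactness of $P$ and \Cref{lem:semi_continuity_singleton} yields graph-closedness. For $H_k$, however, continuity of $h$ need only hold $\gamma$-a.e. for $\gamma\in P$, not for $\gamma\in P_k$. I would therefore avoid invoking the lemma for $H_k$, and prove graphical excess convergence directly by a sequential argument, which is all that is really needed.

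The direct argument runs as follows. Suppose $\D(\Graph_\Theta H_k,\Graph_\Theta H)\not\to0$. Then there exist $\varepsilon>0$, a subsequence, and points $(\theta_k,u_k)$ with $u_k=\int_S h(\theta_k,s)\,\di\gamma_k(s)$, $\gamma_k\in P_k$, and $\operatorname{dist}((\theta_k,u_k),\Graph_\Theta H)\ge\varepsilon$. The set $Q:=(\bigcup_k P_k)\cup P$ is weakly sequentially compact, as noted in \Cref{lem:semicontinuity_set_integrals}, and $\Theta$ is compact. We may therefore extract a further subsequence with $\theta_k\to\theta\in\Theta$ and $\gamma_k\to\gamma$ weakly, where $\gamma\in P$ by the closure hypothesis. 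The sequence $u_k$ is bounded by $\sup_{Q}\int\psi\,\di\gamma'<\infty$, so a further subsequence converges, $u_k\to u$. It then suffices to prove $u=\int_S h(\theta,s)\,\di\gamma(s)$. This gives $(\theta,u)\in\Graph_\Theta H$, which contradicts the $\varepsilon$-separation.

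The key step, and the main obstacle, is passing to the limit jointly in $(\theta_k,\gamma_k)$. The approach is to lift the problem to the Polish space $\tilde S:=\Theta\times S$, with the measures $\tilde\gamma_k:=\delta_{\theta_k}\otimes\gamma_k$ and $\tilde\gamma:=\delta_\theta\otimes\gamma$. Then $\tilde\gamma_k\to\tilde\gamma$ weakly, since weak convergence of product measures follows from convergence of the marginals. Apply \Cref{lem:semi_continuity_singleton} on $\tilde S$ with $\tilde h(\vartheta,s):=h(\vartheta,s)$, the set $\tilde E:=\Theta\times E$ (of full $\tilde\gamma$-measure because $\gamma(E)=1$), on which $\tilde h$ is continuous by assumption 1, and the dominating function $\tilde\psi(\vartheta,s):=\psi(s)$. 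The moment bound $\sup_k\int\tilde\psi^{1+\eta}\,\di\tilde\gamma_k\le\sup_{Q}\int\psi^{1+\eta}<\infty$ holds by assumption 2. The lemma then gives $u_k=\int\tilde h\,\di\tilde\gamma_k\to\int\tilde h\,\di\tilde\gamma=\int_S h(\theta,s)\,\di\gamma(s)$, as required.

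Two points need care here. First, \Cref{lem:semi_continuity_singleton} requires continuity only on a set of full measure for the limit measure $\tilde\gamma$, and the truncation/Portmanteau argument it relies on uses exactly this; measurability of $E$ should be assumed or replaced by a Borel subset. Second, \Cref{lem:semi_continuity_singleton} is stated with $P$ a family containing the limit, so we take the family to be $\{\tilde\gamma_k\}\cup\{\tilde\gamma\}$.
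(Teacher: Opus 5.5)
Your proposal is correct and is essentially the paper's proof. The paper also lifts to $\delta_{\theta_k}\otimes\gamma_k\to\delta_\theta\otimes\gamma$ and concludes with the uniform-integrability argument of \Cref{lem:semi_continuity_singleton}; it simply routes the compactness step through \Cref{lem:semicontinuity_set_integrals} and \Cref{lem:graph_convergence_pointwise_characterization}. Your direct contradiction argument inlines both, and so avoids the outer semicontinuity and compact-valuedness of $H_k$ that \Cref{lem:graph_convergence_pointwise_characterization} formally requires and the paper never checks. As in the paper, you take weak sequential compactness of $(\bigcup_k P_k)\cup P$ for granted, and the hypotheses as literally stated do not give it. For instance, with $p=1$, $S=\R$, $P_k=\{\delta_k\}$, $P=\{\delta_0\}$ and $h(\theta,s)=\sin s$, one gets $\D(\Graph_\Theta H_k,\Graph_\Theta H)=|\sin k|$. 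Tightness of $\bigcup_k P_k$ therefore has to be read into the statement; this is harmless in the transport applications, where the $P_k$ consist of couplings of weakly convergent marginals.
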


\begin{proof}
    In order to apply \Cref{lem:graph_convergence_pointwise_characterization} let $\theta \in \Theta$ and $\theta_k \xrightarrow[]{} \theta$. Consider the subsets of joint measures
    \begin{equation*}
        Q_k := \left\{ \delta_{\theta_k} \otimes \gamma_k \ : \gamma_k \in P_k \right\}.
    \end{equation*}

    Let $q_k \in Q_k$ be a converging sequence to some $q$. This implies convergence of the marginals of $q_k$. The second marginal of $q_k$ is a measure $\gamma_k \in P_k$ hence $\gamma_k$ weakly converges to some  $\gamma \in P$ by assumption. Furthermore, $\delta_{\theta_k} \to \delta_\theta$  in probability. Thus, by Slutsky theorem, $q_k$ weakly converges to $\delta_\theta \otimes \gamma$. This means, if $Q :=\{\delta_\theta \otimes \gamma \ : \ \gamma \in P\}$, any converging sequence $q_k \in Q_k$ converges in $Q$. Hence if we define
    \begin{equation*}
        G_k :=  \left\{\int_{\Theta \times S} h \di q_k \ : \ q_k \in Q_k \right\} \qquad G :=  \left\{\int_{\Theta \times S} h \di q \ : \ q \in Q \right\}
    \end{equation*}
    then all conditions are satisfied to apply \Cref{lem:semicontinuity_set_integrals} to the set-valued maps $(G_k)_{k \in \N}$ and $G$ with the refined subsets $(Q_k)_{k \in \N}$ and $Q$. Thus we have $\D(G_{k}, G) \xrightarrow[k \to \infty]{} 0$. By the definition of $Q_k$ and $Q$, this also writes $\D(H_{k}(\theta_k), H(\theta)) \xrightarrow[k \to \infty]{} 0.$
    This holds for any $\theta \in \Theta$ and $\theta_k \to \theta$, thus we may apply \Cref{lem:graph_convergence_pointwise_characterization} to conclude.
\end{proof}

\section{Subdifferential of parameterized transport costs}
\label{sec:4_convergence_subdiff_OT}

In this section, we prove the graphical convergence of empirical subgradients, as stated in our main result \Cref{th:limit_subdiff}.
The key observation is that, for any $\theta$, the population transport cost \eqref{eq:transport_cost_population} can be reformulated as an optimal transport problem between the sample spaces $\X$ and $\Y$. This relies on the existence of a \emph{lift}, which connects transport plans between pushforward measures to plans between the underlying data distributions \cite[Lemma~2.6]{dumont2025existence}; see also \cite[\S 7]{lift_tanguy2025sliced}. For convenience, we use the following consequence.

\begin{lemma}[{adapted from \cite[Lemma~2.7]{dumont2025existence}}]
\label{lem:lift_transport}
Let $c : \mathcal{U} \times \mathcal{V} \to \R_+$, $f : \X \to \mathcal{U}$ and $g : \Y \to \V$ be measurable. Then
\begin{equation*}
   \inf_{\pi \in \Pi(f_\#\mu,\, g_\# \nu)}
   \int_{\mcU \times \V} c(u, v)\, \di\pi(u,v)
   \;=\;
   \inf_{\gamma \in \Pi(\mu, \nu)}
   \int_{\X \times \Y} c(f(x), g(y))\, \di\gamma(x,y).
\end{equation*}
\end{lemma}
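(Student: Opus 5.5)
We prove the two inequalities separately. Write $L$ for the left-hand side (the transport problem between the pushforward measures) and $R$ for the right-hand side (the lifted problem on $\X\times\Y$).

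\emph{The inequality $L \le R$.} Take any $\gamma \in \Pi(\mu,\nu)$ and set $\pi := (f\times g)_\#\gamma$. Its first marginal is $f_\#\mu$ and its second is $g_\#\nu$, so $\pi \in \Pi(f_\#\mu, g_\#\nu)$. By the change-of-variables formula,
\[
\int_{\mcU\times\V} c\,\di\pi = \int_{\X\times\Y} c(f(x),g(y))\,\di\gamma(x,y).
\]
This is legitimate because $c\ge 0$ is measurable, so both integrals are well defined in $[0,\infty]$. Taking the infimum over $\gamma$ gives $L\le R$.

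\emph{The inequality $R\le L$.} Fix $\pi \in \Pi(f_\#\mu, g_\#\nu)$. We lift it to a coupling $\gamma\in\Pi(\mu,\nu)$ with $(f\times g)_\#\gamma=\pi$.

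First, disintegrate $\mu$ along $f$. Since $\X$ is Polish, the disintegration theorem provides a measurable kernel $u\mapsto \mu^u \in \mathcal P(\X)$ with
\[
\mu = \int \mu^u \,\di (f_\#\mu)(u), \qquad \mu^u\bigl(f^{-1}(u)\bigr)=1 \ \text{ for } f_\#\mu\text{-a.e. } u.
\]
In the same way, obtain a kernel $v\mapsto\nu^v$ from $\nu$ along $g$.

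Next, define
\[
\gamma := \int_{\mcU\times\V} \mu^u\otimes \nu^v \,\di\pi(u,v).
\]
The first marginal of $\gamma$ is $\int \mu^u\,\di\pi(u,v)=\int\mu^u\,\di(f_\#\mu)(u)=\mu$, using that the first marginal of $\pi$ is $f_\#\mu$. Symmetrically, the second marginal of $\gamma$ is $\nu$, so $\gamma\in\Pi(\mu,\nu)$.

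Now compute the cost. For $\pi$-a.e. $(u,v)$, the measure $\mu^u\otimes\nu^v$ is concentrated on $f^{-1}(u)\times g^{-1}(v)$, where the integrand $c(f(x),g(y))$ equals $c(u,v)$. Hence, by Tonelli's theorem (valid since $c\ge0$),
\[
\int_{\X\times\Y} c(f(x),g(y))\,\di\gamma(x,y)=\int_{\mcU\times\V} c(u,v)\,\di\pi(u,v).
\]
Taking the infimum over $\pi$ gives $R\le L$, which completes the proof.

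\emph{Where the difficulty lies.} The main technical point is the disintegration step. A regular conditional kernel exists because $\X$ is Polish. We must also check that the "a.e." qualifiers are compatible: the null set of $u$ where concentration fails is $f_\#\mu$-null, hence its preimage under the first projection is $\pi$-null. One further caveat is that $\mcU$ is only a Borel subset of $\R^d$. This is handled by viewing $f_\#\mu$ as a measure on $\R^d$ and using the measurable graph of $f$. If preferred, this step can be replaced by the gluing lemma applied to the couplings $(\mathrm{id},f)_\#\mu$, $\pi$ and $(g,\mathrm{id})_\#\nu$, glued along their common marginals $f_\#\mu$ and $g_\#\nu$.
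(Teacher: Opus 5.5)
Your proof is correct and follows the route the paper relies on. The paper gives no argument of its own and instead cites the lift lemma of Dumont, Lacombe and Vialard: every $\pi\in\Pi(f_\#\mu,g_\#\nu)$ equals $(f\times g)_\#\gamma$ for some $\gamma\in\Pi(\mu,\nu)$. Your disintegration (or gluing) construction proves exactly this, and the reverse inequality is the pushforward step, with nonnegativity of $c$ keeping all integrals well defined in $[0,\infty]$.
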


In particular, \eqref{eq:transport_cost_population} admits the equivalent formulation
\begin{equation*}
   \T_c(\theta)
   = \inf_{\gamma \in \Pi(\mu, \nu)}
     \int_{\X \times \Y} c\bigl(f(\theta,x),\, g(\theta,y)\bigr)\, \di\gamma(x,y).
\end{equation*}
By introducing  $C(\theta,x,y) := c\bigl(f(\theta,x),\, g(\theta,y)\bigr)$, this naturally leads us to study the parameterized optimal transport function
\begin{equation}
    \label{eq:parameterized_transport_cost}
    T_C(\theta, \mu, \nu)
    := \inf_{\gamma \in \Pi(\mu,\nu)}
       \int_{\X \times \Y} C(\theta, x, y)\, d\gamma(x,y).
\end{equation}

In the case of \eqref{eq:parameterized_transport_cost}, we call \emph{optimal transport plan} any $\gamma \in \Pi(\mu,\nu)$ attaining the infimum.

In \Cref{subsection:parametric_stability}, we analyze the optimal transport problem \eqref{eq:parameterized_transport_cost} and prove parametric stability results for $T_C$. In \Cref{subsection:envelope_transport_cost} we present an envelope formula for the subgradients of $T_C$. In \Cref{subsection:proof_convergence_empirical_subdiff}, we combine the two previous subsections to obtain our convergence result from \Cref{subsec:main_results_vanilla_transport}. Extension to sliced costs, \Cref{subsec:main_results_vanilla_sliced}, is established in \Cref{subsec:sliced_costs_proofs}.

\subsection{Parametric stability of transport costs}
\label{subsection:parametric_stability}

Below, we establish the qualitative stability of the transport problem  \eqref{eq:parameterized_transport_cost} jointly with respect to the parameter $\theta$ and the marginals $\mu,\nu$. This is mainly an application of the maximum theorem, recalled in \Cref{prop:max_theorem}. We work under the following assumptions
\begin{assumption}
    \label{assumption:regularity_cost}
    \begin{enumerate}
    \item[]
    \item (Continuity off null sets) There exist full-measure open sets $E \subset \X$ and $F \subset \Y$ such that $\mu(E) = \nu(F) = 1$ and $C$ is continuous on $\R^p \times E \times F$.
    \item (Uniform integrability)  
    There exists a locally bounded function $\kappa : \R^p \to \R_+$ and measurable functions $\psi_\X : \X \to  \R_+$, $\psi_\Y : \Y \to  \R_+$ such that $|C(\theta,x,y)| \le \kappa(\theta)( \psi_\X(x) + \psi_\Y(y))$ for all $(\theta,x,y) \in \R^p \times  \X \times \Y$.
\end{enumerate}
\end{assumption}

The stability result then states as follows.
\begin{proposition}[Parametric stability of optimal transport] 
\label{prop:parametric_stability}
Under \Cref{assumption:regularity_cost}, let $T_C$ be defined as \eqref{eq:parameterized_transport_cost}. Let $\mu_n \xrightarrow[n \to \infty]{} \mu \in \mathcal{P}(\X)$ and $\nu_m \xrightarrow[m \to \infty]{} \nu \in \mathcal{P}(\Y)$ weakly, and $\theta_k \xrightarrow[k \to \infty]{} \theta \in \R^p$.  Assume there exists $\eta > 0$ such that $\sup_{n,m \geq 0} \int_\X \psi_\X^{1+\eta} \di \mu_n + \int_\Y \psi_\Y^{1+ \eta} \di \nu_m < \infty$. Then $T_C(\theta_k, \mu_n,\nu_m) \xrightarrow[\min(k,n,m) \to \infty]{} T_C(\theta, \mu,  \nu)$. Furthermore, any weak accumulation point of the optimal transport plans for $T_C(\theta_k, \mu_n,\nu_m)$  as $\min(k,n,m) \to \infty$, is optimal for $T_C(\theta, \mu,\nu)$.    
\end{proposition}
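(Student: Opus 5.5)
We apply the Berge maximum theorem (\Cref{prop:max_theorem}, in its argmin form) sequentially, with the continuous coupling correspondence of \Cref{prop:continuity_couplings}. The index set is the sequential space $U := \{(\theta_k,\mu_n,\nu_m)\} \cup \{(\theta,\mu,\nu)\}$, made of the sequence and its limit with the product of the Euclidean and weak topologies. On $U$ we take $Z(\theta',\mu',\nu') := \Pi(\mu',\nu')$ and the objective
\[
\Phi(\theta',\gamma) := \int_{\X\times\Y} C(\theta',x,y)\,\di\gamma(x,y), \qquad \gamma \in \Pi(\mu',\nu').
\]
Three things are needed: $Z$ is continuous and compact-valued on $U$; $\Phi$ is jointly continuous on $\Graph Z$; and minimizers exist, so that $T_C(\theta',\mu',\nu') = \min_{\gamma \in Z}\Phi$.

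\textbf{Step 1: compactness of the couplings.} $\Pi(\mu',\nu')$ is weakly compact, since it is tight (marginal tightness) and closed. Continuity of $Z$ is \Cref{prop:continuity_couplings}. Moreover, the whole union $\mathcal Q := \bigcup_{n,m}\Pi(\mu_n,\nu_m)\cup\Pi(\mu,\nu)$ is tight, because the families $\{\mu_n\}$ and $\{\nu_m\}$ are weakly convergent, hence tight by Prokhorov.

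\textbf{Step 2: uniform integrability of the bound.} Set $\Psi(x,y) := \psi_\X(x)+\psi_\Y(y)$. By convexity, $\Psi^{1+\eta} \le 2^{\eta}(\psi_\X^{1+\eta}+\psi_\Y^{1+\eta})$, so
\[
\sup_{\gamma\in\mathcal Q}\int \Psi^{1+\eta}\,\di\gamma \le 2^\eta\sup_{n,m}\Big(\int\psi_\X^{1+\eta}\,\di\mu_n+\int\psi_\Y^{1+\eta}\,\di\nu_m\Big) < \infty .
\]
The same bound holds for $\mu,\nu$ themselves: by Fatou/Portmanteau lower semicontinuity, after truncating $\psi\wedge M$, or we simply add it as given. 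Since $\kappa$ is locally bounded, we fix a compact neighbourhood $\Theta_0$ of $\theta$ containing all $\theta_k$. Then for all $\theta'\in\Theta_0$,
\[
|C(\theta',x,y)| \le \bar\kappa\,\Psi(x,y), \qquad \bar\kappa := \sup_{\Theta_0}\kappa .
\]

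\textbf{Step 3: joint continuity of $\Phi$, the main obstacle.} Take $(\theta_j,\gamma_j)\to(\theta^\ast,\gamma^\ast)$ in $\Graph Z$. We encode the parameter as a measure, $q_j := \delta_{\theta_j}\otimes\gamma_j \to \delta_{\theta^\ast}\otimes\gamma^\ast$ weakly, exactly as in the proof of \Cref{prop:graph_convergence_parameterized_integrals}. We then apply \Cref{lem:semi_continuity_singleton} on $S = \Theta_0\times\X\times\Y$ with $h = C$ and domination $\bar\kappa\Psi$. The continuity set is $\Theta_0\times E\times F$. It has full $\delta_{\theta^\ast}\otimes\gamma^\ast$-measure because the marginals of $\gamma^\ast$ are $\mu$ and $\nu$ (or $\mu_n,\nu_m$), and $\gamma^\ast(E\times F)\ge 1-\mu(E^c)-\nu(F^c)=1$.

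The delicate point is the limit point of $U$ versus the sample points. For the sample points $(\theta_k,\mu_n,\nu_m)$ we would also need $\mu_n(E)=1$. This is not assumed, and atoms of $\mu_n$ might lie outside $E$. However, at an isolated point of $U$ continuity is automatic: the only sequences converging to $(\theta_k,\mu_n,\nu_m)$ are eventually constant, so $\Phi$ is continuous there regardless. Hence only the limit point $(\theta,\mu,\nu)$ matters, and there $\mu(E)=\nu(F)=1$ holds. Existence of minimizers at every point follows from the same lemma: $\Phi(\theta',\cdot)$ is lower semicontinuous, in fact continuous, on the compact set $\Pi(\mu',\nu')$. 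For isolated points we argue directly by lower semicontinuity. If $C$ fails to be continuous off $E\times F$ there, we replace the minimum by the infimum, and the argument below still goes through via near-minimizers.

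\textbf{Step 4: conclusion.} Berge's theorem now gives continuity of the value function on $U$ at the limit point, i.e.\ $T_C(\theta_k,\mu_n,\nu_m)\to T_C(\theta,\mu,\nu)$ as $\min(k,n,m)\to\infty$. It also gives outer semicontinuity of the argmin. Concretely, let $\gamma_{k,n,m}$ be optimal plans. By Step 1 they admit weak accumulation points $\gamma$, and $\gamma\in\Pi(\mu,\nu)$ by closedness of the graph of $Z$. By Step 3,
\[
\Phi(\theta,\gamma) = \lim \Phi(\theta_k,\gamma_{k,n,m}) = \lim T_C(\theta_k,\mu_n,\nu_m) = T_C(\theta,\mu,\nu),
\]
so $\gamma$ is optimal for $T_C(\theta,\mu,\nu)$.
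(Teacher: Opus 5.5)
Your proposal follows essentially the same route as the paper. The paper likewise combines Berge's maximum theorem with the continuity of $(\mu,\nu)\rightrightarrows\Pi(\mu,\nu)$ (\Cref{prop:continuity_couplings}), and gets joint continuity of $(\theta',\gamma)\mapsto\int C(\theta',\cdot)\,\di\gamma$ at the limit through the $\delta_{\theta_j}\otimes\gamma_j$ encoding, by applying \Cref{prop:graph_convergence_parameterized_integrals} with singletons $P_k=\{\gamma_k\}$, $P=\{\gamma\}$.

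One minor correction: the points $(\theta_k,\mu_n,\nu_m)$ of $U$ need not be isolated. This happens, for instance, if $\theta_{k_0}=\theta$ for some $k_0$ while $\theta_k\neq\theta$ infinitely often, or if $\mu_{n_0}=\mu$ for some $n_0$; there continuity of $\Phi$ can genuinely fail when $\mu_n(E)<1$. This is harmless: as in the paper, Berge's theorem only needs to be applied locally, that is, with continuity of $\Phi$ at pairs $(\theta,\gamma)$ with $\gamma\in\Pi(\mu,\nu)$, which you do establish.
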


\begin{proof}
 We will apply the maximum theorem locally. It is thus sufficient to verify continuity conditions at $\theta, \mu, \nu$ and $\gamma \in \Pi(\mu,\nu)$. First, continuity of the coupling map $Z : (\mu,\nu) \rightrightarrows \Pi(\mu,\nu)$ is given by \Cref{prop:continuity_couplings}. Now, let us verify that under condition 1 and 2, $f : (\theta', \mu',\nu', \gamma) \mapsto \int_{\X \times \Y} C(\theta', x,y) \di \gamma(x,y)$ is continuous at $(\theta, \mu,\nu,\gamma)$ on the set $$\{(\theta',\mu', \nu', \gamma') \ : \ \theta' \in \Theta, \mu' \in \mathcal{P}(\X), \nu' \in\mathcal{P}(\Y), \gamma' \in \Pi(\mu',\nu') \}.$$

Then, take arbitrary and weakly converging sequences $\theta_k' \to \theta$  and   $\mu_k' \to \mu$, $\nu_k' \to \nu$ and $\gamma_{k}' \to \gamma$. Define $P_k := \{\gamma_k'\}$ and $P := \{\gamma\}$. Let $\Psi(x,y) := \psi_\X(x) + \psi_\Y(y)$.    By continuity of $\psi_\X, \psi_\Y$ and since they are lower bounded, we have by assumption $\int_{\X} \psi_\X(x)^{1 + \eta} \di \mu(x) + \int_\Y\psi_\Y(y)^{1+\eta} \di \nu(y)< \infty$. Also,  remark that $\Psi(x,y)^{1 + \eta} \leq 2^{\eta} (\psi_\X(x)^{1 + \eta} + \psi_\Y(y)^{1+\eta})$ for all $x,y$ hence we have $\sup_{\gamma' \in \left(\bigcup_{k \geq 0} P_k \right) \cup P} \int_{\X \times \Y} \Psi^{1 + \eta} \di \gamma' < \infty$. Furthermore,  $C$ is jointly continuous on  $\R^p \times E \times F$ where $E \times F$ has full measure with respect to $\gamma$, and $\|C(\theta, x,y)\| \leq \kappa(\theta)\Psi(x,y)$ by \Cref{assumption:regularity_cost}. All the conditions are satisfied to apply \Cref{prop:graph_convergence_parameterized_integrals} to obtain $\int_{\X \times \Y} C(\theta_k, x,y) \di \gamma_k(x,y) \xrightarrow[k \to \infty]{} \int_{\X \times \Y} C(\theta, x,y) \di \gamma(x,y).$ Continuity of $f$ is verified. All the conditions are satisfied to apply the maximum theorem, \Cref{prop:max_theorem}, hence the result.
\end{proof}

Observe that the proposition above can be viewed as a parametric analogue of the classical qualitative stability result for transport costs. In contrast to the standard argument in \cite[Theorem 5.18]{villani}, which relies on cyclical monotonicity and considers uniformly converging cost functions, we propose an alternative perspective by using general set-valued analysis arguments. As an illustration, we obtain the following consequence for empirical distributions.

\begin{corollary}
Under \Cref{assumption:regularity_cost}, let \((x_n)_{n \in \N}\) and \((y_m)_{m \in \N}\) be i.i.d.\ sequences with laws
\(\mu\) and \(\nu\), respectively, and denote the associated empirical measures by
\[
\mu_n := \frac{1}{n} \sum_{i=1}^n \delta_{x_i},
\qquad
\nu_m := \frac{1}{m} \sum_{j=1}^m \delta_{y_j}.
\]
Then, almost surely with respect to the sequences
\((x_n)_{n \in \N}\) and \((y_m)_{m \in \N}\), the following holds: for any sequence \(\theta_k \to \theta\), $T_C(\theta_k,\mu_n,\nu_m)
\xrightarrow[\min(k,n,m) \to \infty]{}
T_C(\theta,\mu,\nu),$
and any weak accumulation point of any sequence of optimal transport plans
\(\gamma_{k,n,m}\) as $\min(k,n,m) \to \infty$, is optimal for \(T_C(\theta,\mu,\nu)\).
\end{corollary}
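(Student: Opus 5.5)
The corollary follows by combining \Cref{prop:parametric_stability} with a single almost-sure event on which the empirical measures converge weakly and have uniformly bounded moments of order $1+\eta$. The statement as written assumes only \Cref{assumption:regularity_cost}, so I will read it, as in \Cref{ass:differentiability_general_transport_cost}, as also assuming that $\int_\X \psi_\X^{1+\eta}\di\mu<\infty$ and $\int_\Y \psi_\Y^{1+\eta}\di\nu<\infty$ for some $\eta>0$. Without such a moment bound the uniform integrability hypothesis of \Cref{prop:parametric_stability} cannot hold.

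\textbf{Step 1: the almost-sure event.} Since $\X$ and $\Y$ are Polish, Varadarajan's theorem gives $\mu_n\to\mu$ and $\nu_m\to\nu$ weakly almost surely. The strong law of large numbers applied to the integrable variables $\psi_\X(x_i)^{1+\eta}$ and $\psi_\Y(y_j)^{1+\eta}$ gives, almost surely,
\[
\int_\X \psi_\X^{1+\eta}\di\mu_n\to\int_\X\psi_\X^{1+\eta}\di\mu
\quad\text{and}\quad
\int_\Y \psi_\Y^{1+\eta}\di\nu_m\to\int_\Y\psi_\Y^{1+\eta}\di\nu .
\]
A convergent real sequence is bounded, so on the intersection $\Omega_0$ of these four full-measure events we have
\[
\sup_{n,m}\Bigl(\int_\X \psi_\X^{1+\eta}\di\mu_n+\int_\Y \psi_\Y^{1+\eta}\di\nu_m\Bigr)<\infty .
\]
If the statement is meant to hold with $n,m\ge 1$ only, the finitely many early terms are finite because each empirical measure has finite support.

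\textbf{Step 2: deterministic conclusion.} Fix an outcome in $\Omega_0$. Every hypothesis of \Cref{prop:parametric_stability} now holds for the deterministic sequences $\mu_n,\nu_m$, and it holds simultaneously for every sequence $\theta_k\to\theta$. The proposition then gives
\[
T_C(\theta_k,\mu_n,\nu_m)\to T_C(\theta,\mu,\nu)\quad\text{as }\min(k,n,m)\to\infty,
\]
and every weak accumulation point of optimal plans $\gamma_{k,n,m}$ is optimal for $T_C(\theta,\mu,\nu)$. The order of quantifiers is what the statement requires: the null set was chosen before $\theta$ and $(\theta_k)$, so the conclusion holds uniformly over all parameter sequences.

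\textbf{Main obstacle.} The only delicate point is the one above: arranging the uniform moment bound on one null set independent of $\theta$, and making the implicit moment hypothesis explicit. Everything else reduces directly to the proposition. Existence of optimal plans for the empirical problems is not needed, since the claim only concerns accumulation points of whichever optimal plans exist.
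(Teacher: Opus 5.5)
Your proposal is correct and follows the paper's proof. The paper likewise gets the uniform $(1+\eta)$-moment bound almost surely from the law of large numbers and then applies \Cref{prop:parametric_stability} directly. Your additions are points the paper leaves implicit: Varadarajan's theorem for almost sure weak convergence of $\mu_n,\nu_m$, fixing the null set before choosing $(\theta_k)$, and the moment hypothesis $\int_\X \psi_\X^{1+\eta}\di\mu,\ \int_\Y\psi_\Y^{1+\eta}\di\nu<\infty$, which \Cref{assumption:regularity_cost} omits but the law-of-large-numbers step needs.
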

\begin{proof}
     By law of large number, the condition $\sup_{n,m}  \int_\X \psi_{\X}^{1+\eta} \di \mu_n +   \int_\Y \psi_{\Y}^{1+\eta} \di \nu_m < \infty$ is satisfied almost surely. The result is then a direct application of \Cref{prop:parametric_stability}.
\end{proof}

\subsection{Envelope formula  for parameterized transport costs}
\label{subsection:envelope_formula}

We now formulate an envelope theorem suited to parameterized transport costs, and clarify how it relates to our integrability assumptions. For convenience, we first recall a general form of Danskin theorem \cite{danskin2012theory}, due to Clarke \cite{clarke1975generalized}.
\label{subsection:envelope_transport_cost}
\begin{proposition}[Envelope formula, adapted from {\cite[Th. 2.8.2]{clarke1990optimization}}]
\label{prop:envelope}
Let $\Theta \subset \R^p$ be open. Define $F : \Theta \to \R$ by $F(\theta) = \max_{\gamma \in V} f(\theta,\gamma)$ for each $\theta \in \Theta$. Assume the following:
\begin{enumerate}
    \item $V$ is a separable and sequentially compact space.
    \item $\gamma \mapsto f(\theta, \gamma)$ is continuous on $V$ for each $\theta \in \Theta$.
    \item $(\theta, \gamma) \mapsto \nabla_\theta f(\theta, \gamma)$ is continuous on $\Theta \times V$.    
    \item Each $f(\cdot, \gamma)$, $\gamma \in V$, is Lipschitz on $\Theta$ with common constant $L$, and for each $\theta \in \Theta$, $\{f(\theta, \gamma) : \gamma \in V\}$ is bounded.
\end{enumerate}
Then $F$ is Lipschitz on $\Theta$ with constant $L$ and 
$$
\partial F(\theta)
=
\overline{\conv} \left\{ 
\nabla_{\theta} f(\theta,\gamma) 
: \gamma \in \argmax_{\gamma' \in V} f(\theta, \gamma')
\right\}.
$$
\end{proposition}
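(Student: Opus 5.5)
The plan is to compare support functions. Recall that for a locally Lipschitz $F$, the Clarke subdifferential \eqref{eq:clarke_subdifferential} is the convex compact set whose support function is the generalized directional derivative
\[
F^\circ(\theta;d)=\limsup_{\theta'\to\theta,\ t\downarrow 0}\frac{F(\theta'+td)-F(\theta')}{t}.
\]
Set
\[
S(\theta):=\conv\Bigl\{\nabla_\theta f(\theta,\gamma) : \gamma\in\argmax_{\gamma'\in V} f(\theta,\gamma')\Bigr\}.
\]
It then suffices to show that $F^\circ(\theta;d)=\max_{s\in S(\theta)}\langle s,d\rangle$ for every $d\in\R^p$. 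Two convex compact sets with the same support function coincide, so this gives the formula (the closure of the convex hull is then automatic).

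\textbf{Preliminaries.} First, $F$ is finite and $L$-Lipschitz on $\Theta$: it is a pointwise supremum of the $L$-Lipschitz functions $f(\cdot,\gamma)$, and it is finite at each point by the boundedness in item 4. Next, $f$ is jointly continuous on $\Theta\times V$. Indeed,
\[
|f(\theta',\gamma')-f(\theta,\gamma)|\le L\|\theta'-\theta\|+|f(\theta,\gamma')-f(\theta,\gamma)|,
\]
and the last term tends to $0$ by item 2. Consequently, for each $\theta$ the maximum defining $F(\theta)$ is attained, by sequential compactness of $V$, and the argmax set is sequentially compact. Moreover, by Berge's theorem (\Cref{prop:max_theorem}) applied to the constant map $\theta\rightrightarrows V$, the map $\theta\rightrightarrows\argmax f(\theta,\cdot)$ is outer semicontinuous. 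Since $\nabla_\theta f$ is continuous (item 3), the image of the argmax under $\nabla_\theta f(\theta,\cdot)$ is compact, so $S(\theta)$ is convex and compact.

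\textbf{Lower bound.} Fix $\gamma$ in the argmax at $\theta$. For $t>0$,
\[
F(\theta+td)-F(\theta)\ge f(\theta+td,\gamma)-f(\theta,\gamma).
\]
Dividing by $t$ and letting $t\downarrow0$ gives $F^\circ(\theta;d)\ge\langle\nabla_\theta f(\theta,\gamma),d\rangle$. Maximizing over such $\gamma$ yields $F^\circ(\theta;d)\ge\max_{s\in S(\theta)}\langle s,d\rangle$.

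\textbf{Upper bound (main obstacle).} Pick sequences $\theta_k\to\theta$ and $t_k\downarrow0$ realizing the limsup in $F^\circ(\theta;d)$, and choose $\gamma_k$ in the argmax at $\theta_k+t_kd$. Then
\[
F(\theta_k+t_kd)-F(\theta_k)\le f(\theta_k+t_kd,\gamma_k)-f(\theta_k,\gamma_k)=t_k\langle\nabla_\theta f(\xi_k,\gamma_k),d\rangle
\]
for some $\xi_k$ on the segment $[\theta_k,\theta_k+t_kd]$, by the mean value theorem applied to the differentiable map $f(\cdot,\gamma_k)$. By sequential compactness we extract $\gamma_k\to\gamma$ along a subsequence. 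The delicate point is to show that $\gamma$ is a maximizer at $\theta$. This follows from the joint continuity of $f$ together with the continuity of $F$: passing to the limit in $f(\theta_k+t_kd,\gamma_k)=F(\theta_k+t_kd)$ gives $f(\theta,\gamma)=F(\theta)$; equivalently, it is the outer semicontinuity of the argmax map noted above. Joint continuity of $\nabla_\theta f$ then gives $\langle\nabla_\theta f(\xi_k,\gamma_k),d\rangle\to\langle\nabla_\theta f(\theta,\gamma),d\rangle$. Hence $F^\circ(\theta;d)\le\max_{s\in S(\theta)}\langle s,d\rangle$, and combining with the lower bound completes the argument.
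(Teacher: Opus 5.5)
Your proof is correct. The paper gives no argument of its own here; it imports the statement from Clarke's Theorem 2.8.2. Your proof is essentially a self-contained version of that argument specialized to $C^1$ members $f(\cdot,\gamma)$. In the upper bound $F^\circ(\theta;d)\le\max_{s\in S(\theta)}\langle s,d\rangle$, the classical mean value theorem suffices in place of a nonsmooth one, combined with sequential closedness of the argmax.

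Doing it directly makes visible several things the citation hides:
\begin{enumerate}
\item Separability of $V$ is never used.
\item The closure in $\overline{\conv}$ is superfluous, since $S(\theta)$ is already compact.
\item Your lower-bound step actually gives $\liminf_{t\downarrow 0}\bigl(F(\theta+td)-F(\theta)\bigr)/t\ge F^\circ(\theta;d)$. So the one-sided directional derivative exists and equals $F^\circ(\theta;d)$, i.e.\ $F$ is Clarke regular. This is exactly the property the paper later invokes for $-t_C$ when interchanging integration and subdifferentiation in the sliced-cost proof.
\end{enumerate}
The citation, in turn, buys brevity and generality (possibly nonsmooth but regular $f(\cdot,\gamma)$, weaker continuity in $\gamma$).

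Two minor points. First, identifying the paper's gradient-limit definition of the Clarke subdifferential with the convex compact set whose support function is $F^\circ(\theta;\cdot)$ is itself a nontrivial result (Clarke's gradient formula, Theorem 2.5.1 of the cited book), and should be cited as such. Second, Berge's theorem as quoted in the paper is stated for compact-valued maps, while $V$ is only assumed sequentially compact. Your direct sequential argument in the upper bound already shows that limits of maximizers are maximizers, so that appeal can simply be dropped.
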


We will apply this envelope formula either for maximum functions (e.g., spectral risk \Cref{subsection:spectral_risk}) or for minimum functions (standard transport cost). For a minimum function defined as $F(\theta) = \min_{\gamma \in \Gamma} f(\theta, \gamma)$, since $\partial (-F) = - \partial F$, \Cref{prop:envelope} applied to $-F$ and $-f$ gives
\begin{equation*}
     \partial F(\theta) = \overline{\conv} \left\{ \nabla_\theta f(\theta,\gamma) \ : \ \gamma \in \argmin_{\gamma' \in V} f(\theta, \gamma')\right\} 
\end{equation*}

The formula specializes to our transport setting from \Cref{sec:main_results}.


\begin{proposition}[Envelope formula for parameterized transport]
\label{prop:envelope_ot}
Let $\Theta' \subset \R^p$ be open and bounded.  Let $C$ satisfy \Cref{ass:differentiability_general_transport_cost} and let $T_C$ be defined as \eqref{eq:parameterized_transport_cost}. Then $T_C(\cdot, \mu,\nu)$ is Lipschitz continuous on $\Theta'$ with constant $\sup_{\theta \in \Theta'}\kappa(\theta)(\int_{\X} \psi_\X \, d\mu + \int_{\Y} \psi_\Y \, d\nu)$, and the envelope formula holds: for any $\theta \in \Theta'$,
\[
\partial_\theta T_C(\theta,\mu,\nu)
=
\left\{
\int_{\X \times \Y}  \nabla_\theta C(\theta,x,y) \, \di\gamma(x,y)
\;:\;
\gamma \in  \argmin_{\gamma' \in \Pi(\mu,\nu)} \int_{\X \times \Y} C(\theta,x,y) \, \di\gamma'(x,y) \right\}.
\]
\end{proposition}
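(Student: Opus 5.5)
We apply the min-version of \Cref{prop:envelope} with $V=\Pi(\mu,\nu)$ and $f(\theta,\gamma)=\int_{\X\times\Y} C(\theta,x,y)\,\di\gamma(x,y)$. The plan has three parts: verify the four hypotheses of \Cref{prop:envelope}, identify $\nabla_\theta f$ by differentiating under the integral, and show that the resulting set of gradients is already convex and closed, so the $\overline{\conv}$ can be dropped.

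\emph{Compactness of $V$.} The set $\Pi(\mu,\nu)$ is tight and weakly closed, so by Prokhorov it is weakly compact. It is metrizable, for instance by the Lévy--Prokhorov metric on the Polish space $\X\times\Y$. Hence it is separable and sequentially compact, which gives condition 1.

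\emph{Continuity and differentiation.} Condition 2 is continuity of $\gamma\mapsto\int C(\theta,\cdot)\,\di\gamma$ on $\Pi(\mu,\nu)$. This follows from \Cref{lem:semi_continuity_singleton} with $P=\Pi(\mu,\nu)$:
\begin{itemize}
\item $C(\theta,\cdot)$ is continuous on $E\times F$, which has full $\gamma$-measure for every coupling;
\item $\Psi(x,y)=\kappa(\theta)(\psi_\X(x)+\psi_\Y(y))$ dominates $|C(\theta,\cdot)|$;
\item $\int\Psi^{1+\eta}\,\di\gamma\le 2^\eta\kappa(\theta)^{1+\eta}\bigl(\int\psi_\X^{1+\eta}\,\di\mu+\int\psi_\Y^{1+\eta}\,\di\nu\bigr)$ is bounded uniformly in $\gamma$ because the marginals are fixed.
\end{itemize}

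To identify $\nabla_\theta f$, take $\theta,\theta'\in\Theta'$ (we may work on a convex open neighbourhood). The mean value inequality gives
\[
|C(\theta,x,y)-C(\theta',x,y)|\le \sup_{\Theta'}\kappa\,(\psi_\X(x)+\psi_\Y(y))\,\|\theta-\theta'\|.
\]
Thus each $f(\cdot,\gamma)$ is Lipschitz with the common constant $L=\sup_{\Theta'}\kappa\,(\int\psi_\X\,\di\mu+\int\psi_\Y\,\di\nu)$, and $\{f(\theta,\gamma):\gamma\in V\}$ is bounded. This is condition 4.

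The $\theta$-differentiability of $C(\cdot,x,y)$ holds pointwise, at least on $E\times F$ for $\gamma$-almost every $(x,y)$. Together with the integrable domination of $\nabla_\theta C$ by $\Psi$, dominated convergence gives $\nabla_\theta f(\theta,\gamma)=\int\nabla_\theta C(\theta,x,y)\,\di\gamma(x,y)$.

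Condition 3 requires joint continuity of $(\theta,\gamma)\mapsto\int\nabla_\theta C(\theta,\cdot)\,\di\gamma$ on $\Theta'\times\Pi(\mu,\nu)$. This is exactly the single-valued case of \Cref{prop:graph_convergence_parameterized_integrals}, applied with singletons $P_k=\{\gamma_k\}$ and $P=\{\gamma\}$ and the integrability bound above. The argument is the same one used in the proof of \Cref{prop:parametric_stability}. We use the closure $\overline{\Theta'}$, which is compact, and this is why $\Theta'$ is taken bounded.

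\emph{Removing $\overline{\conv}$.} The argmin set $A\subset\Pi(\mu,\nu)$ is convex, since $\gamma\mapsto f(\theta,\gamma)$ is linear and the feasible set is convex. It is also weakly closed by continuity of $f(\theta,\cdot)$, hence compact.

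The map $\gamma\mapsto\int\nabla_\theta C(\theta,\cdot)\,\di\gamma$ is linear, so it maps the convex set $A$ onto a convex set. It is also continuous, by the same application of \Cref{lem:semi_continuity_singleton}, so it maps the compact set $A$ onto a compact set. The image is therefore convex and closed, and $\overline{\conv}$ of it is the set itself, giving the stated formula.

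\emph{Main obstacle.} I expect the delicate step to be the justification of differentiation under the integral, uniformly over $\gamma$, when $C$ is only controlled off null sets. The first thing I would try is the mean-value representation
\[
C(\theta',x,y)-C(\theta,x,y)=\int_0^1\nabla_\theta C(\theta+t(\theta'-\theta),x,y)\cdot(\theta'-\theta)\,\di t,
\]
valid on $E\times F$, combined with the uniform integrability supplied by the $1+\eta$ moments. This would give differentiability of $f(\cdot,\gamma)$ with a remainder that is $o(\|\theta'-\theta\|)$ uniformly in $\gamma$.
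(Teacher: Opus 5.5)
Your proposal is correct and follows the same route as the paper. Both apply the min-version of \Cref{prop:envelope} with $V=\Pi(\mu,\nu)$ and $f(\theta,\gamma)=\int C(\theta,\cdot)\,\di\gamma$, and verify its four hypotheses via \Cref{lem:semi_continuity_singleton}, \Cref{prop:graph_convergence_parameterized_integrals} and the mean-value bound. Your explicit differentiation under the integral, and your justification for dropping $\overline{\conv}$ (the argmin set is convex and compact, and $\gamma\mapsto\int\nabla_\theta C(\theta,\cdot)\,\di\gamma$ is affine and continuous), fill in steps the paper leaves implicit.
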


\begin{proof}
Our goal is to apply the envelope theorem (Proposition~\ref{prop:envelope}) with 
\[
f(\theta,\gamma) = \int_{\X \times \Y} C(\theta,x,y)\,\di\gamma(x,y) \qquad \text{and} \qquad  V = \Pi(\mu,\nu).
\]
We verify the hypotheses of Proposition~\ref{prop:envelope} in turn.

\emph{(1) $V$ is separable and sequentially compact.}
Since $\mathcal{P}(\X \times \Y)$ is Polish under the weak topology, $V$ is separable \cite[Th 6.8]{billingsley2013convergence}. Furthermore, $V = \Pi(\mu, \nu)$ is sequentially compact, see \cite[Lemma 4.3]{villani}.

\emph{(2) Continuity of $\gamma \mapsto f(\theta,\gamma)$.} Let $\Psi(x,y) := \psi_\X(x) + \psi_\Y(y)$ and $\theta \in \Theta'$. Let $(\gamma_n)_{n \in \N}$ be an arbitrary sequence from $V$ converging to $\gamma \in V$. Remark that $(\psi_\X(x) + \psi_\Y(y))^{1+\eta}\leq 2^{\eta} (\psi_\X(x)^{1+ \eta}+ \psi_\Y(y)^{1+\eta})$. Hence under \Cref{ass:differentiability_general_transport_cost}, we may apply \Cref{lem:semi_continuity_singleton} with the choice $P = V$ and $h = C(\theta, \cdot)$ to obtain $f(\theta, \gamma_n) \to f(\theta,\gamma)$.

\emph{(3) Continuity of $(\theta,\gamma) \mapsto \nabla_\theta f(\theta, \gamma)$}. 
Let $\gamma_n$ be an arbitrary sequence from $V$ converging to $\gamma \in V$, and $\theta_n \to \theta$. By assumption, $\nabla_\theta C$ is continuous on $\Theta' \times E \times F$ where $E \times F$ has full measure with respect to any $\gamma \in V$. Furthermore, $\|\nabla_\theta C(\theta,x,y) \|\leq \kappa(\theta)\Psi(x,y) = \kappa(\theta)(\psi_\X(x) + \psi_\Y(y))$  for all $\theta,x,y$. As in the proof of (2), under condition 2 of \Cref{ass:differentiability_general_transport_cost}, we easily verify that $\sup_{\gamma \in V} \int  \Psi^{1+ \eta} \di \gamma < \infty$. Hence we may apply \Cref{prop:graph_convergence_parameterized_integrals} with $P_k = P = V$ and $h = \nabla_\theta C$ to have the desired convergence $\nabla_{\theta} f(\theta_n, \gamma_n) \xrightarrow[n \to \infty]{} \nabla_\theta f(\theta, \gamma)$.

\emph{(4) Boundedness and Lipschitz constant in $\theta$.}
For any $\gamma \in V$ and $\theta,\theta' \in \Theta'$, continuous differentiability and Cauchy-Schwarz inequality yields
\begin{equation*}
    |C(\theta',x,y) - C(\theta,x,y)| \leq  \int_0^1 \| \nabla_\theta C(t\theta' + (1 - t)\theta,x,y)\|\| \theta'- \theta \| \di t
\end{equation*}
hence by integrating with respect to an arbitrary $\gamma \in V$ on both sides, we easily obtain
\begin{equation*}
    |f(\theta',\gamma) - f(\theta,\gamma)| 
\le  \sup_{\theta \in \Theta'}\kappa(\theta) \left( \int_\X  \psi_\X\di \mu+ \int_\Y  \psi_\Y\di \nu\right)\|\theta' - \theta\|
\end{equation*}
Thus, the family $f(\cdot,\gamma)$ is Lipschitz with global constant given above. Boundedness of $\{f(\theta, \gamma) \ : \ \gamma \in V\}$ is obtained from condition 2 of \Cref{ass:differentiability_general_transport_cost}.

All assumptions of Proposition~\ref{prop:envelope} are now satisfied, hence the result.
\end{proof}

\subsection{Convergence of empirical subgradients for parameterized costs.} 
\label{subsection:proof_convergence_empirical_subdiff}

We are now ready to obtain our main result. First, we establish it for parameterized cost, and then, we conclude on the transport problem between parameterized models, \eqref{eq:transport_cost_population} by using the existence of lift, \Cref{lem:lift_transport}.

\begin{proposition} \label{prop:graphical_convergence_general} Let $\Theta \subset \R^p$ be compact with nonempty interior. Let $C$ satisfy \Cref{ass:differentiability_general_transport_cost} and let $T_C$ be defined as \eqref{eq:parameterized_transport_cost}. Let $\mu_n \xrightarrow[n \to \infty]{} \mu \in \mathcal{P}(\X)$ and $\nu_m \xrightarrow[m \to \infty]{} \nu \in \mathcal{P}(\Y)$ weakly. Assume there exists $\eta > 0$ such that $\sup_{n,m \geq 0} \int_\X \psi_\X^{1+\eta} \di \mu_n + \int_\Y \psi_\Y^{1+ \eta} \di \nu_m < \infty$. Then $\D \left(\Graph_\Theta \partial_\theta T_{C}(\cdot, \mu_n,\nu_m), \Graph_\Theta \partial_\theta T_C(\cdot,\mu,\nu) \right) \xrightarrow[\min(n,m) \to \infty]{} 0.$
\end{proposition}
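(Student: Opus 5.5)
The plan is to combine the envelope formula, \Cref{prop:envelope_ot}, with the parametric stability result, \Cref{prop:parametric_stability}, and then close the argument with the pointwise characterization of graph convergence, \Cref{lem:graph_convergence_pointwise_characterization}. Fix a bounded open set $\Theta'\supset\Theta$. By \Cref{prop:envelope_ot}, applied to each pair $(\mu_n,\nu_m)$ (the moment bound passes to them, and the full-measure sets $E,F$ have to be checked for the empirical measures, or else the continuity of $\nabla_\theta C$ used only at the atoms), we have for every $\theta\in\Theta'$
\[
\partial_\theta T_C(\theta,\mu_n,\nu_m)=\Bigl\{\int \nabla_\theta C(\theta,\cdot)\,\di\gamma : \gamma\in O_{n,m}(\theta)\Bigr\},
\]
where $O_{n,m}(\theta)$ is the set of optimal plans in $\Pi(\mu_n,\nu_m)$. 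The same representation holds for $(\mu,\nu)$, with the set $O(\theta)$. Both maps are compact-valued and outer semicontinuous, since they are Clarke subdifferentials of Lipschitz functions. This is exactly what \Cref{lem:graph_convergence_pointwise_characterization} requires.

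So it suffices to show the following: for any sequence of indices $(n_j,m_j)$ with $\min(n_j,m_j)\to\infty$ and any $\theta_j\to\theta\in\Theta$, we have $\D\bigl(\partial_\theta T_C(\theta_j,\mu_{n_j},\nu_{m_j}),\partial_\theta T_C(\theta,\mu,\nu)\bigr)\to0$. I would argue sequentially, in the spirit of \Cref{lem:semicontinuity_set_integrals}.

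\begin{enumerate}
\item Take elements $u_j=\int\nabla_\theta C(\theta_j,\cdot)\,\di\gamma_j$ with $\gamma_j\in O_{n_j,m_j}(\theta_j)$, and suppose along a subsequence that $\mathrm{dist}(u_j,\partial_\theta T_C(\theta,\mu,\nu))\ge\delta>0$.
\item The family $\bigcup_{n,m}\Pi(\mu_n,\nu_m)$ is tight, because its marginals converge weakly. By Prokhorov, $\gamma_j\to\gamma$ weakly along a further subsequence, and $\gamma\in\Pi(\mu,\nu)$ by \Cref{prop:continuity_couplings}.
\item By \Cref{prop:parametric_stability}, this $\gamma$ is optimal for $T_C(\theta,\mu,\nu)$.
\item Apply \Cref{prop:graph_convergence_parameterized_integrals} (or directly \Cref{lem:semi_continuity_singleton} with the product-measure trick $\delta_{\theta_j}\otimes\gamma_j$) to $h=\nabla_\theta C$, with the domination $\|\nabla_\theta C\|\le \sup_{\Theta'}\kappa\,(\psi_\X+\psi_\Y)$. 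The uniform $(1+\eta)$-moment holds along $\{\gamma_j\}\cup\{\gamma\}$ by the hypothesis and the elementary bound $\Psi^{1+\eta}\le2^\eta(\psi_\X^{1+\eta}+\psi_\Y^{1+\eta})$.
\item This gives $u_j\to\int\nabla_\theta C(\theta,\cdot)\,\di\gamma\in\partial_\theta T_C(\theta,\mu,\nu)$, contradicting step 1.
\end{enumerate}

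The sets $u_j$ range over are bounded uniformly by the same domination, so the contradiction argument indeed yields convergence of the excess distance.

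The main obstacle is step 4: justifying that the limit plan $\gamma$ gives full mass to $E\times F$. This is fine, since $\gamma\in\Pi(\mu,\nu)$ and $\mu(E)=\nu(F)=1$. The other delicate point is that the envelope formula for the empirical problems needs the continuity of $\nabla_\theta C$ at the sample points; this holds almost surely, since the samples lie in $E$ and $F$. I would also check that the envelope formula yields the set itself rather than its closed convex hull. This holds because the set of optimal plans is convex and compact and the map $\gamma\mapsto\int\nabla_\theta C\,\di\gamma$ is linear and continuous, so the image is already convex and compact.
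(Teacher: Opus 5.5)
Your proposal is correct and follows the paper's proof. Both combine the envelope formula (\Cref{prop:envelope_ot}) for each $(\mu_n,\nu_m)$, parametric stability (\Cref{prop:parametric_stability}) to place weak limits of optimal plans in the population argmin, integral convergence with $h=\nabla_\theta C$ (via \Cref{prop:graph_convergence_parameterized_integrals}, i.e.\ the product trick $\delta_{\theta_j}\otimes\gamma_j$), and the pointwise characterization \Cref{lem:graph_convergence_pointwise_characterization}. Your sequential version is in fact slightly more explicit than the paper's on three points the paper leaves implicit: tightness of the plans, the dependence of the optimal-plan sets on $\theta_j$, and the requirement $\mu_n(E)=\nu_m(F)=1$ for applying the envelope formula to the approximating measures.
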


\begin{proof}
   Under \Cref{ass:differentiability_general_transport_cost},  \Cref{prop:envelope_ot} applies to any $n,m \in \N$ to give $\partial_\theta T_C(\theta,\mu_n,\nu_m)$ is equal to
    \[
\left\{
\int_{\X \times \Y}  \nabla_\theta C(\theta,x,y) \, \di\gamma(x,y)
\;:\;
\gamma \in  \argmin_{\gamma' \in \Pi(\mu_n,\nu_m)} \int_{\X \times \Y} C(\theta,x,y) \, \di\gamma'(x,y) \right\}.
\]

Take any subsequence $\mu_{n_k}, \nu_{m_k}$ where $\min(n_k,m_k) \xrightarrow[k \to \infty]{} \infty$. Let $\theta_k \to \theta \in \Theta$ be an arbitrary converging sequence. Define the following subsets:
\begin{equation*}
    \begin{aligned}
        & P_k := \argmin_{\gamma' \in \Pi(\mu_{n_k},\nu_{m_k})} \int_{\X \times \Y} C(\theta_k,x,y) \, \di\gamma'(x,y) \qquad \forall k \geq 0. \\ 
        & P := \argmin_{\gamma' \in \Pi(\mu,\nu)} \int_{\X \times \Y} C(\theta,x,y) \, \di\gamma'(x,y).
    \end{aligned}
\end{equation*}
With these notations, \Cref{prop:parametric_stability} gives us that any (weak) accumulation point of any sequence $\gamma_k  \in P_k$ belongs to $P$. Hence, we may apply \Cref{prop:graph_convergence_parameterized_integrals} with $h := \nabla_\theta C$ and the choices of $P_k$, $P$ above to obtain the result.
\end{proof}

This indeed applies to empirical distributions.

\begin{corollary}
\label{corollary:graphical_conv_parameterized_empirical}
    Under \Cref{ass:differentiability_general_transport_cost}, let \((x_n)_{n \in \N}\) and \((y_m)_{m \in \N}\) be i.i.d.\ sequences with laws
\(\mu\) and \(\nu\), respectively, and denote the associated empirical measures by $\mu_n := \frac{1}{n} \sum_{i=1}^n \delta_{x_i}$ and $\nu_m := \frac{1}{m} \sum_{j=1}^m \delta_{y_j}.$ Then almost surely with respect to the sequences
\((x_n)_{n \in \N}\) and \((y_m)_{m \in \N}\), \Cref{prop:graphical_convergence_general} holds.
\end{corollary}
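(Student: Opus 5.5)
The plan is to reduce \Cref{corollary:graphical_conv_parameterized_empirical} to \Cref{prop:graphical_convergence_general}. That proposition is deterministic: it holds for \emph{any} pair of weakly convergent sequences $\mu_n \to \mu$, $\nu_m \to \nu$ satisfying the uniform moment bound $\sup_{n,m} \int_\X \psi_\X^{1+\eta}\,\di\mu_n + \int_\Y \psi_\Y^{1+\eta}\,\di\nu_m < \infty$. So it suffices to exhibit a single event of probability one on which both hypotheses hold for the empirical measures. On that event the conclusion of \Cref{prop:graphical_convergence_general} then applies pathwise.

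The first step is weak convergence. Since $\X$ and $\Y$ are Polish, Varadarajan's theorem gives $\mu_n \to \mu$ weakly almost surely. The same holds for $\nu_m \to \nu$. Each is an event of full probability: the argument picks a countable convergence-determining family of bounded continuous functions, which exists by separability, and applies the strong law of large numbers to each function, intersecting the resulting countably many null complements.

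The second step is the moment bound. By \Cref{ass:differentiability_general_transport_cost}, $\int_\X \psi_\X^{1+\eta}\,\di\mu < \infty$. The strong law of large numbers applied to the real random variables $\psi_\X(x_i)^{1+\eta}$ then gives
\[
\frac1n \sum_{i=1}^n \psi_\X(x_i)^{1+\eta} \to \int_\X \psi_\X^{1+\eta}\,\di\mu \quad\text{almost surely.}
\]
A convergent real sequence is bounded, so $\sup_n \int_\X \psi_\X^{1+\eta}\,\di\mu_n < \infty$ almost surely. The same argument applies to $\nu_m$. Since the two suprema are taken separately over $n$ and over $m$, their sum bounds the joint supremum over $(n,m)$.

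The third step intersects the four full-measure events, which still has probability one, and invokes \Cref{prop:graphical_convergence_general} on each outcome in the intersection. Everything in that proposition, including the envelope formula \Cref{prop:envelope_ot} applied to $(\mu_n,\nu_m)$, needs only \Cref{ass:differentiability_general_transport_cost}, which is a statement about $C$ and the population measures.

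The one point needing care is that the open sets $E$ and $F$ have full measure for the empirical measures too. This holds almost surely: each $x_i$ lies in $E$ with probability one, and a countable intersection of full-measure events still has full measure, so every $\mu_n$ is supported in $E$ (similarly for $F$). Beyond that, I expect no real obstacle. The only subtlety is making sure the null set does not depend on $n$, $m$, or $\theta$, which the countable intersections above handle.
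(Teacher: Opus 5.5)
Your proposal is correct and follows the paper's own argument. The strong law of large numbers gives the uniform moment bound almost surely, almost sure weak convergence of empirical measures on Polish spaces (Varadarajan's theorem, which the paper cites via Dudley) gives the convergence hypothesis, and the deterministic \Cref{prop:graphical_convergence_general} is then applied pathwise; your extra check that $\mu_n(E)=\nu_m(F)=1$ almost surely, needed to use the envelope formula for the empirical measures, is a detail the paper leaves implicit, and you handle it correctly.
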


\begin{proof}
     By law of large number, the condition $\sup_{n,m}  \int_\X \psi_{\X}^{1+\eta} \di \mu_n +   \int_\Y \psi_{\Y}^{1+\eta} \di \nu_m < \infty$ is satisfied almost surely. Furthermore, almost surely, empirical measures weakly converge to their population limit \cite[Th. 11.4.1]{dudley2018real}. The result is then a direct application of \Cref{prop:graphical_convergence_general}.
\end{proof}

\bigskip
Now we conclude and prove \Cref{th:limit_subdiff}.

\begin{proof}[of \Cref{th:limit_subdiff}] 
Under \Cref{ass:differentiability_general_transport_cost}, let $C : (\theta,x,y) \mapsto c(f(\theta,x),g(\theta,y)).$

By \Cref{lem:lift_transport}, for each $\theta \in \R^p$, $\T_c(\theta) = \min_{\gamma \in \Pi(\mu,\nu)} \int_{\X \times \Y} C(\theta,x,y) \di \gamma(x,y) = T_C(\theta,\mu,\nu)$
where the minimum is attained by the condition $|C(\theta,x,y)| \leq \kappa(\theta) \left(\int_\X \psi_\X \di \mu + \int_\Y \psi_\Y \di \nu \right)< \infty$
of \Cref{ass:differentiability_general_transport_cost}.2. We may also apply \Cref{lem:lift_transport} with the empirical distributions to obtain $\T^{n,m} = T^{n,m}_C$. With this choice of parameterized cost $C$, we may apply \Cref{corollary:graphical_conv_parameterized_empirical} to obtain $\D(\Graph_\Theta \partial \T_c^{n,m}, \Graph_\Theta \partial \T_c ) \xrightarrow[\min(n,m) \to \infty]{} 0$ almost surely.
\end{proof}
\subsection{Convergence of empirical subgradients for sliced costs}

\label{subsec:sliced_costs_proofs}

We now prove the graphical convergence result for sliced costs, \Cref{th:limit_subdiff_sliced}. $\mathcal{K} \subset \R^d$ is a compact set, and we consider a parameterized cost function $C : \Theta \times \X \times \Y \times \mathcal{K} \to \R$. Remark that compared to the setting of \Cref{subsection:parametric_stability}, we add the dependence on the projection vectors $\phi \in \mathcal{K}$. For any distributions $\mu' \in \mathcal{P}(\X)$, $\nu' \in \mathcal{P}(\Y)$ and $\rho' \in \mathcal{P}(\mathcal{K})$ we define the parameterized sliced cost as
\begin{equation}
\label{eq:sliced_cost_tC}
    S_C(\theta, \mu',\nu',\rho') = \int_{\mathcal{K}} t_C(\theta, \mu',\nu',\phi) \di \rho'(\phi)
\end{equation}
where $t_C$ is the cost $t_C(\theta, \mu',\nu',\phi):= \inf_{\gamma \in \Pi(\mu',\nu')} \int_{\X \times \Y} C(\theta,x,y,\phi) \di \gamma(x,y).$



The convergence states as follows.

\begin{proposition}  \label{prop:graph_convergence_param_sliced} Let $C$ satisfy  \Cref{assumption:main_differentiability_parameterized_sliced_cost} and let $S_C$ be defined as \eqref{eq:sliced_cost_tC} let $(\mu_n)_{n \in \N}$, $(\nu_m)_{m \in \N}$ be sequences of probability distributions such that $\mu_n \xrightarrow[n \to \infty]{} \mu$, $\nu_m \xrightarrow[m \to \infty]{} \nu$ weakly and $\sup_{n,m} \int_{\X} \psi_\X^{1+\eta} \di \mu_n + \int_\Y \psi_\Y^{1+\eta} \di \nu_m<  \infty$. 
Let $(\rho_k)_{k \in \N}$ be the sequence of empirical distributions from $\rho$, for all $k \in \N$, $\rho_k = \frac{1}{k} \sum_{j=1}^k \delta_{\phi_j}$ where $(\phi_j)_{j \in \N}$ is i.i.d from $\rho$. For all $\theta \in \Theta$, $n,m,k \in \N$, let $S_C^{n,m,k}(\theta) := S_C(\theta,\mu_n,\nu_m,\rho_k)$ and let $S_C^\infty(\theta) := S_C(\theta,\mu,\nu,\rho)$. Then almost surely, for any $\varepsilon >0$ there exists $N_\varepsilon \geq 0$ such that for all $n,m \geq N_\varepsilon$, there exists $K_{n,m}$ such that for all $k \geq K_{n,m}$,   $\D \left(\Graph_\Theta \partial S_C^{n,m,k}, \Graph_\Theta \partial S_C^\infty \right) \leq  \varepsilon.$
\end{proposition}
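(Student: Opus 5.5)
The proof runs in two stages, mirroring the order of limits in the statement: first $k\to\infty$ with $n,m$ fixed, then $\min(n,m)\to\infty$. The basic device is to view the sliced cost as a single parameterized integral over the product space. For fixed $\phi$, $t_C(\cdot,\mu',\nu',\phi)$ is an instance of $T_C$ from \eqref{eq:parameterized_transport_cost} with cost $C(\cdot,\cdot,\cdot,\phi)$.

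\emph{Step 1: a subdifferential formula.} Since $C$ satisfies \Cref{assumption:main_differentiability_parameterized_sliced_cost} uniformly in $\phi\in\mathcal K$, \Cref{prop:envelope_ot} gives, for each $\phi$,
\[
\partial_\theta t_C(\theta,\mu',\nu',\phi)=\Bigl\{\int\nabla_\theta C(\theta,x,y,\phi)\,\di\gamma : \gamma\in P_\phi(\theta,\mu',\nu')\Bigr\}.
\]
Here $P_\phi$ is the set of optimal plans. The same proposition shows that these functions are Lipschitz with a constant uniform in $\phi$. For $\rho_k$ discrete, the Clarke sum rule gives
\[
\partial S_C^{n,m,k}(\theta)\subset \frac1k\sum_{j}\partial_\theta t_C(\theta,\mu_n,\nu_m,\phi_j).
\]
For $S_C^\infty$ I will not try to prove equality. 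Instead I will apply \Cref{prop:envelope} directly to $S_C^\infty(\theta)=\min_{\Gamma}\int\!\!\int C\,\di\gamma_\phi\,\di\rho(\phi)$, where $\Gamma$ is the set of measurable families of couplings. Equivalently, I minimize over $\mathcal P(\mathcal K\times\X\times\Y)$ with marginals $\rho$ on $\mathcal K$ and $\phi$-disintegrations in $\Pi(\mu',\nu')$, that is, over couplings of $\rho\otimes\mu'$ and $\rho\otimes\nu'$ that are "fibred" over $\phi$. This set is weakly compact and continuous in $(\mu',\nu',\rho')$ by the same argument as \Cref{prop:continuity_couplings}. I then get the exact formula
\[
\partial S_C(\theta,\mu',\nu',\rho')=\Bigl\{\int \nabla_\theta C\,\di\Lambda:\Lambda\in \mathcal P_{\mathrm{opt}}(\theta,\mu',\nu',\rho')\Bigr\},
\]
both for $\rho'=\rho_k$ and for $\rho'=\rho$. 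The hypotheses of \Cref{prop:envelope} are checked exactly as in \Cref{prop:envelope_ot}: $\mathcal K$ is compact, $\nabla_\theta C$ is continuous on $\R^p\times E\times F\times\mathcal K$, and the domination $\kappa(\theta)(\psi_\X+\psi_\Y)$ holds with $1+\eta$ moments.

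\emph{Step 2: joint stability and graphical convergence.} Repeating \Cref{prop:parametric_stability} verbatim on $\mathcal K\times\X\times\Y$, with $C$ continuous on $\R^p\times E\times F\times\mathcal K$, shows the following: whenever $\theta_j\to\theta$ and $(\mu_j',\nu_j',\rho_j')\to(\mu',\nu',\rho')$ weakly with uniform moment bounds, every accumulation point of optimal $\Lambda_j$ is optimal. \Cref{prop:graph_convergence_parameterized_integrals} with $h=\nabla_\theta C$ then yields graphical convergence in excess along any such sequence. I apply this twice.

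(a) For fixed $n,m$: almost surely $\rho_k\to\rho$ by Varadarajan's theorem, so $\D(\Graph_\Theta\partial S_C^{n,m,k},\Graph_\Theta\partial S_C(\cdot,\mu_n,\nu_m,\rho))\to0$ as $k\to\infty$. This gives $K_{n,m}$ with excess at most $\varepsilon/2$.

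(b) With $\rho$ fixed: $\D(\Graph_\Theta\partial S_C(\cdot,\mu_n,\nu_m,\rho),\Graph_\Theta\partial S_C^\infty)\to0$, which gives $N_\varepsilon$.

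The two excess bounds combine by the triangle inequality for $\D$, after checking that the intermediate graph is compact.

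\emph{Expected obstacle.} The delicate point is the fibred coupling set. I must show that its continuity in $(\mu',\nu',\rho')$, in particular inner semicontinuity under $\rho_k\to\rho$, survives the constraint that the conditional marginals equal $\mu',\nu'$ for $\rho'$-a.e. $\phi$. My first attempt is to prove it directly by gluing: given an admissible $\Lambda$ for $\rho$, use a measurable selection $\phi\mapsto\gamma_\phi$, approximate it by a selection continuous in $\phi$ (Lusin's theorem together with the convexity of $\Pi(\mu',\nu')$), and integrate that selection against $\rho_k$. If this becomes messy, a fallback is to keep only the sum-rule inclusion for $\partial S_C^{n,m,k}$ and to define the limiting set via the family $\phi\mapsto P_\phi$. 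That route needs only outer semicontinuity, handled through \Cref{prop:max_theorem} and \Cref{lem:interchange_excess_dist_integral}, and it suffices for the excess statement.
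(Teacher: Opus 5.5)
Your route is correct in outline and genuinely different from the paper's. The paper never lifts the sliced problem. It applies \Cref{prop:graphical_convergence_general} slice by slice and integrates the pointwise excess bounds over $\phi$ using \Cref{lem:interchange_excess_dist_integral} and dominated convergence. It then identifies $\partial S_C$ with $\int_{\mathcal K}\partial_\theta t_C(\cdot,\phi)\,\di\rho(\phi)$ through Clarke regularity of $-t_C$ and the interchange theorem \cite[Th.~2.7.2]{clarke1990optimization}. Finally, it handles $k\to\infty$ at fixed $(n,m)$ by citing the graphical law of large numbers \cite[Th.~4.11]{NorWet13}.

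Your fibred-coupling formulation replaces all of this with one envelope formula and one stability argument on $\mathcal K\times\X\times\Y$. This buys several things. No regularity or interchange theorem is needed. The only probabilistic input is the single almost sure event $\rho_k\to\rho$. Since nothing beyond weak convergence of $\rho_k$ is used, your argument also covers arbitrary weakly convergent $\rho_k$, an extension the paper explicitly omits. It even yields a joint limit $\min(n,m,k)\to\infty$ rather than nested quantifiers.

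What you must supply instead is (i) a measurable selection $\phi\mapsto\gamma_\phi$ of optimal plans, to identify the fibred minimum with $\int t_C\,\di\rho'$ (e.g.\ via \cite[18.19]{infinite}), and (ii) stability of fibred optimizers. For (ii), the inner semicontinuity you worry about is not needed, so the Lusin/gluing construction can be dropped. The fibred set is trivially closed under weak limits, since its constraints are that the $(\phi,x)$- and $(\phi,y)$-marginals equal $\rho'\otimes\mu'$ and $\rho'\otimes\nu'$. Convergence of the optimal values follows slicewise. Indeed, \Cref{prop:parametric_stability} with $(\theta,\phi)$ as a joint parameter gives continuous convergence of $t_C$, hence uniform convergence on the compact set $\mathcal K$. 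Therefore $\int t_C(\theta_l,\cdot)\,\di\rho'_l\to\int t_C(\theta,\cdot)\,\di\rho'$. Closedness, value convergence, and the continuity of $(\theta,\Lambda)\mapsto\int C(\theta,\cdot)\,\di\Lambda$ from \Cref{lem:semi_continuity_singleton} together force every accumulation point of optimal $\Lambda_l$ to be optimal.

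Your fallback, by contrast, undersells what it needs. \Cref{lem:interchange_excess_dist_integral} only compares integrals against the same measure. Passing from $\rho_k$ to $\rho$ along that route therefore genuinely requires a graphical law of large numbers, which is exactly the external result the paper invokes.
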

Remark compared to \Cref{prop:graphical_convergence_general}, we restrict to empirical i.i.d approximations of \(\rho\) for simplicity. Extending the analysis to arbitrary weakly convergent approximations \(\rho_k \rightarrow \rho\) would require intermediary results which we omit here.

\begin{proof}
Throughout the proof, we set
\[
t_C^{n,m}(\theta,\phi) := t_C(\theta,\mu_n,\nu_m,\phi),
\qquad
t_C^\infty(\theta,\phi) := t_C(\theta,\mu,\nu,\phi).
\]

\medskip

Let $(n_l)_{l\in\mathbb N}$ and $(m_l)_{l\in\mathbb N}$ be arbitrary subsequences with
$n_l\to\infty$ and $m_l\to\infty$.
We aim to apply the characterization of graphical convergence given in
\Cref{lem:graph_convergence_pointwise_characterization}. By the set-valued Jensen formula, \Cref{lem:interchange_excess_dist_integral}, we have
\begin{equation}
	\label{eq:slice_pointwise_limit}
	\begin{aligned}
		&\D\!\Biggl(
		\int_{\mathcal K}
		\partial_\theta t_C^{n_l,m_l}(\theta_l,\phi)\,\mathrm d\rho(\phi),
		\\
		&\hspace{3.2em}
		\int_{\mathcal K}
		\partial_\theta t_C^\infty(\theta,\phi)\,\mathrm d\rho(\phi)
		\Biggr)
		\\
		&\qquad\le
		\int_{\mathcal K}
		\D\!\left(
		\partial_\theta t_C^{n_l,m_l}(\theta_l,\phi),
		\partial_\theta t_C^\infty(\theta,\phi)
		\right)
		\,\mathrm d\rho(\phi).
	\end{aligned}
\end{equation}

Under \Cref{assumption:main_differentiability_parameterized_sliced_cost},
\Cref{ass:differentiability_general_transport_cost} holds for each
$C(\cdot,\cdot,\cdot,\phi)$ with $\phi\in\mathcal K$.
Hence, by combining \Cref{prop:graphical_convergence_general} and
\Cref{lem:graph_convergence_pointwise_characterization}, we obtain for every
$\phi\in\mathcal K$, $$\D\!\left(
\partial_\theta t_C^{n_l,m_l}(\theta_l,\phi),
\partial_\theta t_C^\infty(\theta,\phi)
\right)
\xrightarrow[l\to\infty]{} 0.$$
To pass to the limit in \eqref{eq:slice_pointwise_limit}, we need a uniformly integrable bound.
Under \Cref{assumption:main_differentiability_parameterized_sliced_cost}, we may apply the envelope
formula from \Cref{prop:envelope_ot}, which yields for all $\theta\in\Theta$ and $\phi\in\mathcal K$,
\[
\partial_\theta t_C^\infty(\theta,\phi)
=
\left\{
\int_{\mathcal X\times\mathcal Y}
\nabla_\theta C(\theta,x,y,\phi)\,\mathrm d\gamma(x,y)
\;:\;
\gamma\in\argmin_{\gamma'\in\Pi(\mu,\nu)}
\int C(\theta,x,y,\phi)\,\mathrm d\gamma'
\right\},
\]
and an analogous formula holds for the empirical counterpart $\partial_\theta t_C^{n,m}(\theta,\phi)$. Since the gradients $\nabla_\theta C$ are integrably bounded and the empirical measures converge
almost surely, the strong law of large numbers implies that the right-hand side of
\eqref{eq:slice_pointwise_limit} is dominated by an integrable function.
Thus, by the dominated convergence theorem, the right-hand side of
\eqref{eq:slice_pointwise_limit} converges to $0$ as $l\to\infty$.

\medskip

We now consider the convergence of the integrated set-valued maps.
By \eqref{eq:slice_pointwise_limit} and \Cref{lem:graph_convergence_pointwise_characterization}, $\int_{\mathcal K} \partial_\theta t_C^{n_l,m_l}(\cdot,\phi)\,\mathrm d\rho(\phi)$ converges graphically to $
\int_{\mathcal K} \partial_\theta t_C^\infty(\cdot,\phi)\,\mathrm d\rho(\phi)$ on $\Theta$. Let us invoke a technical point needed only for the interchange of integration and
subdifferentiation.
Since the envelope formula in \Cref{prop:envelope_ot} is derived from
\cite[Th.~2.8.2]{clarke1990optimization}, the functions
$-t_C^{n,m}$ and $-t_C^\infty$ are \emph{Clarke regular}  with respect to $\theta$, see \cite[Def. 2.3.4]{clarke1990optimization}.
Consequently, the subdifferential–integral interchange theorem
\cite[Th.~2.7.2]{clarke1990optimization} applies, yielding
\[
S_C^{n,m}
=
\int_{\mathcal K} \partial_\theta t_C^{n,m}(\cdot,\phi)\,\mathrm d\rho(\phi),
\qquad
S_C^\infty
=
\int_{\mathcal K} \partial_\theta t_C^\infty(\cdot,\phi)\,\mathrm d\rho(\phi).
\]
where these notations are given in the statement of \Cref{prop:graph_convergence_param_sliced}. Hence, for any $\varepsilon>0$, there exists $N_\varepsilon\in\mathbb N$ such that for all
$n,m\ge N_\varepsilon$, $\D\!\left(
\Graph_\Theta \partial S_C^{n,m},
\Graph_\Theta \partial S_C^\infty
\right)
\le \varepsilon/2.$
Moreover, for each fixed $(n,m)$, there exists $K_{n,m}\in\mathbb N$ such that for all
$k\ge K_{n,m}$, $\D\!\left(
\Graph_\Theta \partial S_C^{n,m,k},
\Graph_\Theta \partial S_C^{n,m}
\right)
\le \varepsilon/2$ almost surely,
by the law of large numbers for graphs of set-valued maps
\cite[Th.~4.11]{NorWet13}.
The result then follows from the triangle inequality for $\D$ and by taking a countable intersection of almost sure events.
\end{proof}

\begin{proof}[of \Cref{th:limit_subdiff_sliced}] Our result now follows easily. By the lift representation \Cref{lem:lift_transport}, each transport \eqref{eq:main_little_transport_c} writes  $$t_c(\theta,\phi) = \min_{\gamma \in \Pi(\mu_n, \nu_m) } \int_{\X \times \Y} c(\phi^\top f(\theta,x), \phi^\top g(\theta,y)) \di \gamma(x,y)$$
hence by chosing $C$ as \eqref{eq:sliced_parameterized_cost}, under \Cref{assumption:main_differentiability_parameterized_sliced_cost}, we may apply \Cref{prop:graph_convergence_param_sliced} to have the result.
\end{proof}

\subsection{Application to transport-based objectives}
\label{subsec:proof_applications}
This section gathers proofs of the applications in \Cref{section:2_applications}.

\subsubsection{Spectral risk.}

\begin{proof}[of \Cref{cor:limit_spectral_risk}]
Under \Cref{ass:differentiability_spectral_risk}.1, remark that if $Q := w_{\#} \nu$ with $\nu$ the uniform distribution $[0,1]$, then $w$ coincides Lebesgue-a.e.\ with the quantile function $F_Q^{-1}$. This allows to write the spectral risk \eqref{eq:SR_population} as a transport problem, see for instance \cite[Lemma 4.55]{follmer2011stochastic} for such a result:
\begin{equation*}
    - \SR(\theta) =  \min \left\{- \int s z \di \pi(s,z) \ : \ \pi \in \Pi( w_{\#} \nu, (\ell_\theta)_\# \mu)  \right\}.
\end{equation*}
The same formula holds for the empirical objective $\SR_n$ \eqref{eq:SR_empirical}. Indeed, set, $\nu_n := \frac{1}{n} \sum_{i=1}^n \delta_{i/n}$ and $\mu_n := \frac{1}{n}\sum_{i=1}^n \delta_{x_i}$ where the sequence $(x_i)_{i \in \N}$ is i.i.d from $\mu$. Then we have by similar arguments
\begin{equation*}
    -\SR_n(\theta) = \min \left\{- \int s z \di \pi(s,z) \ : \ \pi \in \Pi( w_{\#} \nu_n, (\ell_\theta)_\# \mu_n)  \right\}.
\end{equation*}

The formula of $G^{\SR}$ is an application of the existence of lift \Cref{lem:lift_transport} and the envelope formula \Cref{prop:envelope_ot} under \Cref{ass:differentiability_spectral_risk} with $C(\theta,x, y) := w(y) \ell(\theta,x)$. Under  \Cref{ass:differentiability_spectral_risk}, we have
\begin{align*}
    |C(\theta,x,y)|+ \|\nabla_\theta C(\theta,x,y)\| & \leq  \|w\|_\infty \left(|\ell(\theta,x)| + \|\nabla_\theta \ell(\theta,x)\| \right) \\& \leq  \|w\|_\infty \kappa(\theta) \psi(x)
\end{align*}
and $\int_\X \psi^{1+ \eta} \di \mu$ and $\|w\|_\infty$ are finite. Note that $C$ is continuous almost everywhere in $x$ and $y$ by \Cref{ass:differentiability_spectral_risk}.2.  

By law of large numbers, almost surely $\sup_{n \geq 0} \int_\X \psi^{1+ \eta}  \di \mu_n < \infty$. Furthermore if $\nu_n:=\frac{1}{n} \sum_{i=1}^n{\delta_{i/n}}$, we have $\nu_n \xrightarrow[]{} \nu$ weakly (by convergence of Riemann sums). Thus, we may apply \Cref{th:limit_subdiff} to obtain the result.
\end{proof}

\subsubsection{Fair learning.} We go to our result on fair learning, \Cref{cor:limit_fairness}. First, we prove the formula for unbalanced sample size \Cref{lem:unbalanced_wasserstein}, with further details on the associated transport plan.

\begin{lemma}[Wasserstein cost for unequal sample sizes]
\label{lem:unbalanced_wasserstein_extended}
Let $\mu_U = \frac1n \sum_{i=1}^n \delta_{u_i}$ and $\mu_V = \frac1m \sum_{j=1}^m \delta_{v_j}$ be empirical measures on $\R$, $u_{(1)} \le \cdots \le u_{(n)}$ and $v_{(1)} \le \cdots \le v_{(m)}$. Let $0 = h_0 \leq h_1 \leq \cdots \leq h_{n+m} = 1$ where $h_1, \ldots h_{n+m}$  are the sorted values of $\{k/n\}_{k=0}^n \cup \{l/m\}_{l=0}^m$. Then for any $q \ge 1$,
\[
 W_q^q(U,V):=\int_{0}^1|F^{-1}_U(s) - F^{-1}_{V}(s)|^q \di s =
\sum_{i=1}^{n+m}
\bigl|u_{(\lceil n h_k \rceil)} - v_{(\lceil m h_k \rceil)}\bigr|^q \, (h_{k} - h_{k-1}).
\]
Furthermore,  if $\sigma_U, \sigma_V$ are ordering permutations of $U$ and $V$, an optimal transport plan $\gamma \in \Pi(\mu_U,\mu_V)$ is given for all $i,j$ by
$$\gamma_{\sigma_U(i), \sigma_V(j)}
:=
\sum_{k=1}^{n+m}
\mathds{1}_{\{\lceil n h_k\rceil=i\}}
\mathds{1}_{\{\lceil m h_k\rceil=j\}}
\,(h_k-h_{k-1}).$$
\end{lemma}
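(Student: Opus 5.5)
The plan is to prove the quantile formula first, then read the transport plan off it, and finally use the comonotone (quantile) coupling to get optimality.

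\emph{Step 1: the formula.} The quantile function of $\mu_U$ is piecewise constant, with $F_U^{-1}(s)=u_{(i)}$ for $s\in((i-1)/n,i/n]$. Equivalently, $F_U^{-1}(s)=u_{(\lceil ns\rceil)}$ for $s\in(0,1]$, and the analogous statement holds for $V$. The points $h_0\le\cdots\le h_{n+m}$ form a common refinement of both grids $\{k/n\}$ and $\{l/m\}$. Hence on each interval $(h_{k-1},h_k]$ with positive length, no grid point of either sequence lies strictly inside. It follows that $\lceil ns\rceil=\lceil nh_k\rceil$ and $\lceil ms\rceil=\lceil mh_k\rceil$ for all $s$ in that interval. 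To check this, note that if $s\in(h_{k-1},h_k]$ and $j/n<s\le h_k$, then $j/n\le h_{k-1}$. Therefore $\lceil ns\rceil$ is the smallest $j$ with $j/n\ge s$, and this equals the smallest $j$ with $j/n\ge h_k$.

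The integrand is then constant on each such interval. Splitting $\int_0^1=\sum_k\int_{h_{k-1}}^{h_k}$ gives the claimed sum, and intervals of zero length contribute nothing.

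\emph{Step 2: the plan is a coupling.} Let $\Lambda$ be Lebesgue measure on $(0,1]$ and consider the pushforward of $\Lambda$ by $s\mapsto(F_U^{-1}(s),F_V^{-1}(s))$. By Step 1, this pushforward assigns mass $\sum_k\mathds 1_{\{\lceil nh_k\rceil=i\}}\mathds 1_{\{\lceil mh_k\rceil=j\}}(h_k-h_{k-1})$ to the pair $(u_{(i)},v_{(j)})$, which is exactly $\gamma_{\sigma_U(i),\sigma_V(j)}$. Summing over $j$ gives $\sum_k\mathds 1_{\{\lceil nh_k\rceil=i\}}(h_k-h_{k-1})=\Lambda\big(((i-1)/n,i/n]\big)=1/n$. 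Symmetrically, summing over $i$ gives $1/m$. So $\gamma\in\Pi(\mu_U,\mu_V)$, with ties handled by the fixed ordering permutations. The cost of $\gamma$ equals the sum from Step 1.

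\emph{Step 3: optimality, the main step.} Here we invoke the classical one-dimensional result that, for convex costs $|u-v|^q$ with $q\ge1$, the quantile coupling is optimal, so $W_q^q=\int_0^1|F_U^{-1}-F_V^{-1}|^q$ \cite[Chapter 2]{Santambrogio2015}. The convention in the statement identifies $W_q^q$ with this integral. The actual content is therefore that $\gamma$, being the quantile coupling, attains the optimum.

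If a self-contained argument is preferred, one can instead note that the support of $\gamma$ is monotone: if $\gamma$ charges both $(u_{(i)},v_{(j)})$ and $(u_{(i')},v_{(j')})$ with $i<i'$, then $j\le j'$, because $s\mapsto(\lceil ns\rceil,\lceil ms\rceil)$ is nondecreasing in both coordinates. A monotone support is $c$-cyclically monotone for convex $c(u,v)=\phi(u-v)$, which yields optimality. The one delicate point is $q=1$, where strict convexity fails; optimality still holds, but uniqueness may not. Citing the quantile representation sidesteps this issue, so that is the route I would take first.
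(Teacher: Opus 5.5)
Your proposal is correct and follows essentially the same route as the paper. Both arguments use three steps:
\begin{enumerate}
\item both quantile functions are constant on each interval $(h_{k-1},h_k]$ of the common refinement;
\item the marginals of $\gamma$ are computed by tiling $((i-1)/n,i/n]$ with these intervals;
\item optimality follows by matching the cost of $\gamma$ with $\int_0^1|F_U^{-1}-F_V^{-1}|^q$.
\end{enumerate}
The paper relies implicitly on the classical one-dimensional identity between this quantile integral and the optimal transport value, whereas you cite it explicitly. Your monotone-support, cyclical-monotonicity argument is a valid self-contained alternative, including for $q=1$.
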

\begin{proof}[of \Cref{lem:unbalanced_wasserstein}] Let $k \in \{1,\ldots,n+m\}$. We look at the integrand on the interval 
$(h_{k-1}, h_k]$ whenever $h_{k-1} < h_k$. 
There exist $(i,j) \in \{0,\ldots,n-1\} \times \{0,\ldots,m-1\}$ satisfying one of the following cases:
\begin{itemize}
    \item $h_{k-1} = \frac{i}{n}$, $h_k = \frac{j+1}{m}$ where $\frac{i}{n} \leq \frac{j}{m} \leq \frac{i+1}{n} \leq \frac{j+1}{m}.$
    \item $h_{k-1} = \frac{j}{m}$, $h_k = \frac{i}{n}$ and $\frac{j}{m} \leq \frac{i}{n} \leq \frac{j+1}{m} \leq \frac{i+1}{n}.$
    \item $h_{k-1} = \frac{i}{n}$, $h_k = \frac{i+1}{n}$ and $\frac{j}{m} \leq \frac{i}{n} \leq \frac{i+1}{n} \leq \frac{j+1}{m}.$
    \item $h_{k-1} = \frac{j}{m}$, $h_k = \frac{j+1}{m}$ and $\frac{i}{n} \leq \frac{j}{m} \leq \frac{j+1}{m} \leq \frac{i+1}{n}.$
\end{itemize}
In each case, $F^{-1}_{U}$ and $F^{-1}_V$ are constant on $(h_{k-1}, h_k]$ for all $k \in \{1, \ldots, n+m\}$. Now, let $s \in (0,1]$. There exists $i \in \{ 0, \ldots,n-1\}$ such that $\frac{i}{n} < s \leq \frac{i+1}{n}$. On $\left(\frac{i}{n}, \frac{i+1}{n} \right]$ this implies $i+1 = \lceil n s\rceil$. Furthermore, $F_{U}^{-1}(s)$ is equal to $U_{(i+1)}$ on the same interval. Thus $F^{-1}_U(s) = U_{(\lceil ns \rceil)}$. Similarly, $F^{-1}_V(s) = V_{(\lceil ms \rceil)}$. This allows us to write
\begin{equation*}
    W_q(U,V)^q = \sum_{k = 1}^{n+m} \int_{h_{k-1}}^{h_{k}} \left| F^{-1}_U(s)- F^{-1}_V(s)\right|^q \di s = \sum_{k = 1}^{n+m} |U_{(\lceil n h_{k} \rceil)} - V_{(\lceil m h_{k} \rceil)}|^q (h_k - h_{k-1}).
\end{equation*}

For any $i \in \{1, \ldots,n\}$, $j \in \{1, \ldots,n\}$, we set $\pi_{ij}
:=
\sum_{k=1}^{n+m}
\mathds{1}_{\{\lceil n h_k\rceil=i\}}
\mathds{1}_{\{\lceil m h_k\rceil=j\}}
\,(h_k-h_{k-1}).$

Up to a permutation, let us show that this defines a transport plan. Remark that $\lceil n h_k\rceil = i$ if and only if $h_k \in \left(\frac{i-1}{n},\frac{i}{n} \right]$, which means $(h_{k-1}, h_k] \subset  \left(\frac{i-1}{n},\frac{i}{n} \right]$. Necessarily, $ \left(\frac{i-1}{n},\frac{i}{n} \right]$ is the disjoint union of intervals of the form $(h_{k-1}, h_k]$, $k = 1, \ldots,n+m$. Hence for any $i$ we have
\begin{equation*}
\sum_{j=1}^m\pi_{ij}=    \sum_{k=1}^{n+m} \mathds{1}_{\{\lceil nh_k\rceil= i \}}(h_{k} - h_{k-1}) = \frac{1}{n}.
\end{equation*}

The same argument holds when summing on indexes $i = 1, \ldots,n$, and $\sum_{i =1}^n \pi_{ij} = \frac{1}{m}$. We verify that $\pi$ is optimal, up to ordering permutations.
\begin{align*}
    \sum_{i=1}^n \sum_{j=1}^m \pi_{ij}|u_{(i)} - v_{(j)}|^q & = \sum_{k=1}^{n+m} \sum_{i=1}^n \sum_{j=1}^m (h_k - h_{k-1})|u_{(i)} - v_{(j)}|^q  \mathds{1}_{\{\lceil n h_k\rceil=i\}} 
\mathds{1}_{\{\lceil m h_k\rceil=j\}} \\ & = \sum_{k=1}^{n+m} (h_k - h_{k-1}) |u_{(\lceil n h_k\rceil)} - v_{(\lceil m h_k\rceil)}|^q = W_q^q(U,V)
\end{align*}
hence the result by taking $\gamma_{\sigma_U(i),\sigma_V(j)} = \pi_{ij}$.
\end{proof}

Now we can prove the calculus part of \Cref{cor:limit_subdiff_fair}.

\begin{lemma} \label{lem:correctness_fair} Under \Cref{ass:differentiability_fairness}, for any $n_0, n_1$ let $G^{\FR}_{n_0,n_1}$ be defined as \eqref{eq:gradient_oracle_fairness}. Then for all $\theta \in \R^p$, $G^{\FR}_{n_0,n_1}(\theta) \in \partial \FR_{n_0,n_1}(\theta)$.
\end{lemma}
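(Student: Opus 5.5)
The plan is to combine the lift representation (\Cref{lem:lift_transport}) with the envelope formula (\Cref{prop:envelope_ot}), applied to the empirical measures $\mu^0_{n_0}$ and $\mu^1_{n_1}$. The cost is
\[
C(\theta,x,y) := \tfrac12\bigl(s(\theta,x)-s(\theta,y)\bigr)^2 .
\]
Then, for each fixed $\theta$, we exhibit one explicit optimal plan whose induced subgradient is exactly $G^{\FR}_{n_0,n_1}(\theta)$.

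\emph{Step 1: reduction.} By \Cref{lem:lift_transport}, $\FR_{n_0,n_1}(\theta) = T_C(\theta,\mu^0_{n_0},\mu^1_{n_1})$. This is a minimum over the finite-dimensional polytope $\Pi(\mu^0_{n_0},\mu^1_{n_1})$ of matrices $\gamma_{ij}\ge 0$ with row sums $1/n_0$ and column sums $1/n_1$.

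\emph{Step 2: the envelope formula applies.} We check \Cref{ass:differentiability_general_transport_cost} for $C$ with respect to the empirical measures. Since these measures have finite support, only the values at the sample points $x_i^0, x_j^1$ matter. Continuity of $C$ and of $\nabla_\theta C = (s(\theta,x)-s(\theta,y))(\nabla_\theta s(\theta,x)-\nabla_\theta s(\theta,y))$ in $\theta$ at these points follows from \Cref{ass:differentiability_fairness}.1. One caveat: differentiability holds only for almost every $x$. Almost surely every sample lies in the good set, and otherwise one restricts to the finitely many atoms. The growth bound $|C|+\|\nabla_\theta C\|\le c\,\kappa(\theta)^2(\psi(x)^2+\psi(y)^2)$ is immediate, and its integrability against an empirical measure is trivial. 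Alternatively, one can invoke \Cref{prop:envelope} directly with $V$ the compact polytope. This is arguably cleaner, since $f(\theta,\gamma)=\sum_{ij}\gamma_{ij}C(\theta,x^0_i,x^1_j)$ is a finite sum of $C^1$ functions. Either way,
\[
\partial \FR_{n_0,n_1}(\theta)=\Bigl\{\sum_{i,j}\gamma_{ij}\nabla_\theta C(\theta,x_i^0,x_j^1) : \gamma \text{ optimal}\Bigr\}.
\]

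\emph{Step 3: identify an optimal plan.} Fix $\theta$, with orderings $\sigma_0,\sigma_1$ chosen according to the tie convention defining $s^j_{(\cdot)}(\theta)$. By \Cref{lem:unbalanced_wasserstein_extended} with $q=2$ (and the factor $\tfrac12$), the plan
\[
\gamma_{\sigma_0(i),\sigma_1(j)}=\sum_k \mathds 1_{\{\lceil n_0h_k\rceil=i\}}\mathds 1_{\{\lceil n_1h_k\rceil=j\}}(h_k-h_{k-1})
\]
belongs to $\Pi(\mu^0_{n_0},\mu^1_{n_1})$. Its cost equals $\int_0^1\tfrac12|F^{-1}_U-F^{-1}_V|^2$, which is the one-dimensional optimal value. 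Transporting this plan back to the sample level through the lift, it is optimal for $T_C(\theta,\cdot,\cdot)$. Plugging it into the envelope formula and regrouping the double sum over $(i,j)$ as a single sum over $k$ gives exactly $G^{\FR}_{n_0,n_1}(\theta)$.

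\emph{Main obstacle.} The delicate point is Step 3 in the presence of ties. There the sorted indices are arbitrary, so the plan built from a particular ordering must still be a genuine coupling of the samples and still attain the minimal cost. Both facts hold for every valid sorting permutation, because the quantile formula of \Cref{lem:unbalanced_wasserstein_extended} only uses the ordered values. It is also necessary that the envelope formula yields the full set of optimal plans without a closure-of-convex-hull issue. This is settled by compactness and convexity of the polytope together with linearity of $\gamma\mapsto\sum\gamma_{ij}\nabla_\theta C$, so the image set is already convex and compact.
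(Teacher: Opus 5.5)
Your proposal is correct and follows essentially the same route as the paper. Both arguments set $C(\theta,x,y)=\tfrac12\bigl(s(\theta,x)-s(\theta,y)\bigr)^2$, identify $\FR_{n_0,n_1}$ with the lifted transport cost, take the explicit plan of \Cref{lem:unbalanced_wasserstein_extended} as an optimal coupling, regroup the double sum over $(i,j)$ into the sum over $k$, and conclude with the envelope formula \Cref{prop:envelope_ot}. Your extra remarks fill in details the paper leaves implicit: that ties are harmless, that \Cref{prop:envelope} can be applied directly on the finite polytope, and that the image set is already convex and compact.
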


\begin{proof} We set $C(\theta,x,y) =  \frac{1}{2}\big(
s(\theta,x)
-
s(\theta,y)
\big)^2$. For $j =0,1$, let $\sigma^j_\theta$  be a permutation of $\{1, \ldots,n_j\}$ such that $s(\theta,x^j_{(1)}) \leq \ldots \leq  s(\theta,x^j_{(n_j)})$. With these notations, we can write
\[
G_{n_0,n_1}^{\operatorname{FR}}(\theta)
=
\sum_{k=1}^{n_0+n_1} \nabla_{\theta} C \left(\theta,x^0_{\sigma^0_\theta(\lceil n_0 h_k\rceil)},x^1_{\sigma^1_\theta(\lceil n_1h_{k} \rceil)} \right) (h_{k}-h_{k-1}),
\]
Hence, the transport plan formulation
\begin{align}
   G_{n_0,n_1}^{\operatorname{FR}}(\theta)& =  \sum_{k=1}^{n_0 + n_1} \sum_{i=1}^{n_0} \sum_{j=1}^{n_1}  \nabla_{\theta} C \left(\theta,x^0_{\sigma^0_\theta(i)},x^1_{\sigma^1_\theta(j)} \right)(h_k - h_{k-1}) \mathds{1}_{\{\lceil n_0 h_k\rceil=i\}} 
\mathds{1}_{\{\lceil n_1 h_k\rceil=j\}} \nonumber \\
& = \sum_{i=1}^{n_0} \sum_{j=1}^{n_1} \gamma_{i,j} \nabla_{\theta} C \left(\theta,x^0_{i},x^1_{j} \right).
\label{eq:grad_fair_as_gamma_nabla_C}
\end{align}
where $\gamma$ is given by \Cref{lem:unbalanced_wasserstein_extended} and is optimal for the discrete transport problem 
\begin{equation*}
    \min_{\gamma \in \R^{n_0 \times n_1}} \frac{1}{2} \sum_{i=1}^{n_0} \sum_{j=1}^{n_1} \gamma_{ij}  \big(
s^0(\theta,x_i^0)
-
s^{1}(\theta,x^1_j))^2
\end{equation*}
By the envelope formula, \Cref{prop:envelope_ot}, \eqref{eq:grad_fair_as_gamma_nabla_C} reads $G_{n_0,n_1}^{\operatorname{FR}}(\theta) \in \partial \FR_{n_0, n_1}(\theta)$.
\end{proof}

Now we prove the limit result, which is the second part of \Cref{cor:limit_subdiff_fair}.

\begin{proof}[of  \Cref{cor:limit_subdiff_fair}] \label{proof:fair} We prove the graphical convergence of $\partial \FR_{n_0,n_1}$ to $\partial \FR$ on any compact set $\Theta \subset \R^p$. 
Our goal is to apply \Cref{th:limit_subdiff}. We verify each point of \Cref{assumption:regularity_cost}. We set $C(\theta,x,y) =  \frac{1}{2}\big(
s(\theta,x)
-
s(\theta,y)
\big)^2.$
Under \Cref{ass:differentiability_fairness}.2,
\begin{align*}
    C(\theta,x,y) \leq  s^2(\theta,x) + s^{2}(\theta,y) \leq \kappa^2(\theta) (\psi^2(x) + \psi^2(y)).
\end{align*}
Under \Cref{ass:differentiability_fairness}.2 we have $\int_{\X}\psi^{2(1+\eta)} \di \mu_j(x) < \infty$ for each group $j = 0,1$. Thus \Cref{assumption:regularity_cost}.2 holds with the replacements $\kappa \leftarrow \kappa^2$, $(\mu,\nu) \leftarrow (\mu^0, \mu^1)$, $\zeta \leftarrow 2 \zeta$. All conditions are satisfied to apply \Cref{th:limit_subdiff} hence $\D \left(\Graph_\Theta \partial \FR_{n_0, n_1}, \Graph_\Theta \partial \FR  \right) \xrightarrow[\min(n_0, n_1)]{} 0,$
and we have item 1 \Cref{cor:limit_subdiff_fair}  by refining $\Theta \leftarrow \Theta + e \Bar{\B}$ for some $e > 0$.

As to item 2, under \Cref{ass:differentiability_fairness}.3 we have by standard uniform law of large number $\D(\Graph_\Theta \nabla \mathcal{L}_n, \Graph_\Theta \nabla \mathcal{L}) \xrightarrow[n \to \infty]{} 0$. Hence by the sum rule of \Cref{lem:graphical_cv_sum},  $\nabla \mathcal{L}_n + \lambda \partial \FR_{n_0,n_1}$ graphically converge to $\nabla \mathcal{L} + \lambda \partial \FR$ on the compact set $\Theta$. Thus we may apply \Cref{lem:critical_set_outer} to the maps 
\begin{equation*}
    \nabla \mathcal{L}_n + \lambda \partial \FR_{n_0,n_1} + N_\Theta \qquad \text{and} \qquad \nabla \mathcal{L} + \lambda \partial \FR + N_\Theta 
\end{equation*}
to obtain $\D(\crit_\Theta (\mathcal{L}_{n_0 + n_1} + \lambda \FR_{n_0,n_1}) , \crit_\Theta (\mathcal{L} + \lambda \FR)) \xrightarrow[\min(n_0, n_1) \to \infty]{} 0$.
\end{proof}

\subsubsection{Sliced Wasserstein learning.} 

\begin{proof}[of \Cref{cor:limit_subdiff_sliced_wasserstein}] This is an application of \Cref{th:limit_subdiff_sliced} with the choice 

    $$C: (\theta,x,y, \phi) \mapsto \frac{1}{2}(\phi^\top f(\theta,x)-  \phi^\top y )^2$$

and $\nabla_\theta C (\theta,x,y,\phi) =\phi^\top (f(\theta,x) - y)  \Jac_\theta f(\theta,x)^\top \phi.$ Let us verify the boundedness conditions from \Cref{assumption:main_differentiability_parameterized_sliced_cost}. Using \(\|\phi\|=1\), we have $|C(\theta,x,y,\phi)|
\le
\frac12\big(\|f(\theta,x)\|+\|y\|\big)^2.$
Moreover,
\[
\|\nabla_\theta C(\theta,x,y,\phi)\|
\le
|\phi^\top(f(\theta,x)-y)|\,\|\Jac_\theta f(\theta,x)^\top\phi\|
\le
\big(\|f(\theta,x)\|+\|y\|\big)\|\Jac_\theta f(\theta,x)\|.
\]
By the growth assumption, $\|f(\theta,x)\|+\|\Jac_\theta f(\theta,x)\|
\le
\kappa(\theta)\psi_\X(x),$
and therefore, on any compact set \(\Theta\subset\mathbb R^p\), with
\(\kappa_\Theta:=\sup_{\theta\in \Theta}\kappa(\theta)<\infty\),
\[
|C(\theta,x,y)|+\|\nabla_\theta C(\theta,x,y)\|
\le
\frac12\big(\kappa_\Theta\psi_\X(x)+\|y\|\big)^2
+
\big(\kappa_\Theta\psi_\X(x)+\|y\|\big)\kappa_\Theta\psi_\X(x).
\]
Thus, for some constant \(A_\Theta>0\), $|C|+\|\nabla_\theta C\|
\le
A_\Theta\big(1+\psi_\X(x)^2+\|y\|^2\big).$
Since $\int_\X \psi_\X^{2(1+\eta)}\,d\mu<\infty$ and $
\int_{\mathbb R^d}\|y\|^{2(1+\eta)}\,d\nu<\infty,$
This verifies the boundedness condition, and the result follows.
\end{proof}
\section{Failure cases for nonsmooth unit costs}
\label{sec:failure}

In this section, we exhibit several situations involving nonsmooth unit costs and nonsmooth models in which our results fail. In \Cref{subsec:failure_envelope}, we give an example where the envelope formula (\Cref{prop:envelope_ot}) does not apply, and hence does not correctly compute subgradients. In \Cref{subsec:non_convergence}, we then present examples where subdifferential convergence for empirical objectives fails; in fact, the empirical objectives may produce spurious local minima that persist in the large-sample limit.

\subsection{Failure of the envelope formula}
\label{subsec:failure_envelope}
The envelope formula in \Cref{prop:envelope_ot} does not apply to general nonsmooth costs \(C\) without additional structural assumptions, such as local weak concavity \cite{clarke1990optimization}. In particular, when \(C\) is defined as in \eqref{eq:c_f_g} and the unit cost \(c\) is a distance, as in Wasserstein-1, the assumptions underlying our approach may fail. The following example illustrates this limitation through a failure of the envelope formula. Let $\mu=\nu=\operatorname{Unif}(\mathbb S^1),$ where \(\mathbb S^1\subset\mathbb R^2\) is the one-dimensional circle, and consider the smooth family of rotations.
\[
f(\theta,x) = R(\theta) x,
\qquad
R(\theta)=
\begin{pmatrix}
\cos\theta & -\sin\theta\\
\sin\theta & \cos\theta
\end{pmatrix}.
\]
We equip \(\mathbb S^1\) with the intrinsic \emph{arclength distance}
\[
d_{\mathbb S^1}(x,y)
=
\min_{k\in\mathbb Z}
|\alpha(x)-\alpha(y)+2\pi k|,
\]
where \(\alpha(x)\in [0, 2\pi)\) denotes the angular coordinate of \(x\). Since rotations preserve \(\mu\), $(f_\theta)_\#\mu=\nu$ for all $\theta.$ Hence the population transport value is zero everywhere
\[
\T(\theta)
:=
\min_{\gamma\in\Pi(\mu,\nu)}
\int_{\mathbb S^1\times\mathbb S^1}
d_{\mathbb S^1}(R(\theta) x,y)\,\di\gamma(x,y) \equiv0.
\]
Indeed, the plan $\gamma_\theta=(\operatorname{Id},R(\theta) \cdot)_\#\mu$
is admissible and has zero cost. However, the naive nonsmooth envelope expression gives a larger set. On the support of
\(\gamma_\theta\), one has \(y=f(\theta,x)\), hence
\[
d_{\mathbb S^1}(f(\theta, x),y)
=
\min_{k\in\mathbb Z}|\alpha(x)+\theta-\alpha(y)+2\pi k|.
\]
At points where \(y=R_\theta x\), the argument of the absolute value is zero. Since the subdifferential of \(r\mapsto |r|\) at \(0\) is \([-1,1]\), we obtain
$\partial_\theta d_{\mathbb S^1}(f(\theta,x),y)
=
[-1,1]$ $\gamma_\theta$-almost everywhere. Consequently, at any $\theta$,
\[
[-1,1]=
\left\{
\int \zeta(x,y)\,\di\gamma_\theta(x,y)
:\ 
\zeta(x,y)\in
\partial_\theta d_{\mathbb S^1}(R_\theta x,y)
\right\},
\]
whereas the true derivative is the singleton \(\{0\}\). Thus, for this nonsmooth distance cost, one cannot rely on the pointwise subgradient envelope formula used above to compute subgradients. 

\subsection{Nonconvergence of empirical derivatives}
\label{subsec:non_convergence}

We show that empirical subgradient convergence, as stated in \Cref{th:limit_subdiff}, may fail for nonsmooth costs and models. We first present a simple example based on the ReLU function, and then build on it to obtain a spurious critical point in the large-sample limit.

\paragraph{A ReLU example.}

Define $h(\theta,x)=x + \operatorname{relu}(\theta)$, where \(\operatorname{relu}(\theta)=\max\{\theta,0\}\). Let
\(\mu_\theta=(h_\theta)_\#\mu\), with  $h_\theta = h(\theta,\cdot)$ and \(\mu=\operatorname{Unif}(0,1)\), and take
\(\nu=\operatorname{Unif}(0,1)\) as a target distribution. Then
\[
\T(\theta):=W_1(\mu_\theta,\nu)=\operatorname{relu}(\theta).
\]
Indeed, for \(\theta\le 0\), \(\mu_\theta=\nu\), while for \(\theta>0\),
\(\mu_\theta=\operatorname{Unif}(\theta,1+\theta)\), so the optimal transport is the
translation by \(\theta\). In particular  \(\T\) is nonsmooth at \(0\) and
\[
\partial \T(0)=[0,1].
\]
For i.i.d and mutually independent $x_1, \ldots x_n \sim \mu$, $y_1,\ldots,y_n \sim \nu$, The one-dimensional sorting formula gives the empirical objective
\[
\T_n(\theta)=\frac1n\sum_{i=1}^n |x_{(i)}+\theta-y_{(i)}|, \qquad \text{for } \theta  >0
\]
and $\T_n(\theta)=\frac1n\sum_{i=1}^n |x_{(i)}-y_{(i)}|$ for $\theta \leq 0$.

While population subderivatives belong to $[0,1]$, limits of empirical subderivatives may approach all values in $[-1,1]$, as stated below.
\begin{proposition} As $n \to \infty$, $\partial \T_n(0)$ converges in law to a random segment $\{tU \ : t \in [0,1]\}$ with $U \sim \operatorname{Unif}(-1,1)$.
\end{proposition}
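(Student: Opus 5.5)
The plan is to compute $\partial \T_n(0)$ explicitly, reduce it to the law of a single random variable, and identify that law with a classical combinatorial one.

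\emph{Step 1: the subdifferential is a random segment.} Set $d_i := x_{(i)}-y_{(i)}$. Almost surely the samples are distinct, so all $d_i\neq 0$. On $\theta\le 0$, $\T_n$ is constant, so its derivative there is $0$. For $0<\theta<\min_i|d_i|$ we have $\T_n(\theta)=\frac1n\sum_i |d_i+\theta|$, which is affine with slope
\[
D_n:=\frac1n\sum_{i=1}^n \operatorname{sign}(x_{(i)}-y_{(i)}).
\]
Gradients near $0$ therefore take only the two values $0$ and $D_n$. By \eqref{eq:clarke_subdifferential},
\[
\partial \T_n(0)=\conv\{0,D_n\}=\{tD_n : t\in[0,1]\}.
\]
The map $u\mapsto \{tu: t\in[0,1]\}$ is continuous from $\R$ into compact sets with the Hausdorff metric. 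By the continuous mapping theorem, it is enough to show that $D_n$ converges in law to $\operatorname{Unif}(-1,1)$.

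\emph{Step 2: the law of $D_n$.} Let $K_n:=\#\{i : x_{(i)}>y_{(i)}\}$, so that $D_n=(2K_n-n)/n$. The aim is the classical fact that $K_n$ is uniformly distributed on $\{0,\dots,n\}$. Given this, $D_n$ is uniform on the grid $\{-1,-1+2/n,\dots,1\}$, which converges in law to $\operatorname{Unif}(-1,1)$.

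To prove the uniformity, merge the $2n$ i.i.d.\ continuous observations and record the increasing order as a lattice path. Each $y$ gives a step $+1$ and each $x$ gives a step $-1$. By exchangeability, this path is uniformly distributed among the $\binom{2n}{n}$ paths with $n$ up-steps and $n$ down-steps.

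Next, translate the event $x_{(i)}>y_{(i)}$ into path language. It holds iff the $i$-th up-step occurs before the $i$-th down-step. Equivalently, the pair formed by the $i$-th up-step and the $i$-th down-step lies above the axis. Pairing the $i$-th up-step with the $i$-th down-step partitions the path's excursions consistently, so $K_n$ equals half the number of steps lying strictly above the axis.

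The Chung--Feller theorem states that the number of such pairs above the axis is uniform on $\{0,\dots,n\}$. This gives $\mathbb P(K_n=k)=1/(n+1)$.

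\emph{Main obstacle.} The delicate point is the bijective bookkeeping in Step 2. One must check carefully that the index-wise comparison $x_{(i)}$ versus $y_{(i)}$ corresponds exactly to the Chung--Feller statistic. If the excursion argument is awkward, the fallback is to verify $\mathbb P(K_n=k)=1/(n+1)$ directly. This can be done by induction on $n$, using the first-return decomposition of the path: the generating function of paths by steps above the axis factors through Catalan numbers, and it yields the same count $\frac{1}{n+1}\binom{2n}{n}$ for each $k$.
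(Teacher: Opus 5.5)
Your proof is correct, and it takes a genuinely different route from the paper.

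\textbf{The paper's route.} The paper argues asymptotically. It applies Donsker's theorem to the two-sample quantile process, $\sqrt n\,(x_{(\lceil ns\rceil)}-y_{(\lceil ns\rceil)})\to B_1-B_2$. It then pushes this through the discontinuous functional $g\mapsto\int_0^1\operatorname{sgn}(g(s))\,ds$, which is justified because the zero set of the limiting bridge is Lebesgue-null. Finally, it identifies the limit through Lévy's uniform law for the positive occupation time of a Brownian bridge.

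\textbf{Your route.} You prove an exact finite-sample statement instead. By exchangeability of the $2n$ pooled observations and the Chung--Feller theorem, $K_n$ is exactly uniform on $\{0,\dots,n\}$. So $D_n$ is uniform on an equispaced grid of $[-1,1]$, and the limit is immediate. Your bookkeeping is right. If the $i$-th up-step precedes the $i$-th down-step, the path is at level $\ge 0$ just before the former and at level $\ge 1$ just before the latter, so both steps lie above the axis. Symmetrically, both lie below otherwise, so the number of steps above the axis is exactly $2K_n$.

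\textbf{What yours buys.} Chung--Feller is the discrete counterpart of Lévy's uniform law, so you get uniformity before the limit rather than after it. This removes the invariance principle and the continuity argument for the sign functional. It also gives a sharper conclusion, namely the exact law of $D_n$ for every $n$. You also make explicit two points the paper leaves implicit: $\partial\T_n(0)=\conv\{0,D_n\}$, and the Hausdorff continuity of $u\mapsto\{tu : t\in[0,1]\}$ needed for the continuous mapping theorem.

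\textbf{What the paper's route buys.} It is more flexible. Your count relies on exact exchangeability of equally sized samples from a single continuous law. The functional-limit argument carries over, for instance, to unequal sample sizes. There the suitably rescaled quantile difference still converges to a Brownian bridge, so the same uniform limit follows, while the Chung--Feller count no longer applies directly.
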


\begin{proof} Almost surely, for each $i$, $x_{(i)}  \neq  y_{(i)}$. On this event the right derivative at \(0\) is $\T'_{n,+}(0)=\frac1n\sum_{i=1}^n \operatorname{sgn}(x_{(i)}-y_{(i)}).$
By Donsker's theorem (quantiles of uniform variables are also sorted cumulative distribution values), we have the convergence in law (uniformly in $s$)
\[
\sqrt n\big(x_{(\lceil ns\rceil)}-y_{(\lceil ns\rceil)}\big)
\xrightarrow[n \to \infty]{} B_1(s)-B_2(s),
\]
where \(B_1,B_2\) are independent Brownian bridges. Since multiplication by
\(\sqrt n>0\) does not change signs, and since the zero set of \(B_1-B_2\) has
Lebesgue measure zero almost surely (hence avoiding discontinuity point of $\operatorname{sgn}$), the continuous mapping theorem gives the convergence in law
\[
\frac1n\sum_{i=1}^n \operatorname{sgn}(\sqrt{n}(x_{(i)}-y_{(i)})) = \T'_{n,+}(0) 
\xrightarrow[n \to \infty]{}
\int_0^1 \operatorname{sgn}(B_1(s)-B_2(s))\,ds.
\]
As \(B_1-B_2\) has the same sign process as a Brownian bridge, Lévy's uniform law for
the positive occupation time of the Brownian bridge \cite{levy1940certains} gives $\int_0^1 \operatorname{sgn}(B_1(s)-B_2(s))\,ds
\sim \operatorname{Unif}(-1,1).$ We easily see the left derivative of $\T_n$ is always $0$ hence the result.
\end{proof}
\paragraph{Spurious criticality.}
The previous example shows that nonsmooth parameterizations may generate empirical
first-order behavior that is not captured by the population Clarke subdifferential.
We now strengthen this observation by constructing a point that is not critical for
the population objective, but which belongs to the limiting support of empirical
subdifferentials with significant probability.

Fix \(w\in(1/2,1)\), for instance \(w=3/4\), and choose \(M>4\). Let
\[
\mu=w\,\operatorname{Unif}(0,1)+(1-w)\delta_M,
\qquad
\nu=w\,\operatorname{Unif}(0,1)+(1-w)\delta_{M-2}.
\]
We use a continuous interpolation between a ReLU shift on the continuous component and
a linear shift near the atom. Namely, let \(\chi:\mathbb R\to[0,1]\) be continuous,
with \(\chi=1\) on a neighborhood of \([0,1]\) and \(\chi=0\) on a neighborhood of \(M\),
and define
\[
h(\theta,x)
=
x+\chi(x)\operatorname{relu}(\theta) +(1-\chi(x))\theta,
\qquad
\operatorname{relu}(\theta)=\max\{\theta,0\}.
\]
Thus \(h(\theta,x)=x+\operatorname{relu}(\theta)\) on the continuous component, while
\(h(\theta,M)=M+\theta\). Let
\[
\T(\theta)=W_1((h_\theta)_\#\mu,\nu) := \int_{0}^1 |F^{-1}_{(h_\theta)_{\#}\mu}(s)- F^{-1}_{\nu}(s)| \di s
\]
Since $M$ is high enough, and when $|\theta|$ is small enough, the monotone transport map matches the continuous block with
the continuous block and the atom with the atom. We easily verify that for $|\theta|$ small enough,
\[
\T(\theta)=w\operatorname{relu}(\theta) +(1-w)|2+\theta|,
\]
hence in particular $\partial \T(0)=[1-w,1]$ and \(0\notin\partial \T(0)\). But we show that $0$ can be a critical point of the empirical objectives, even for large sample sizes.

\begin{proposition}[Spurious empirical criticality] \label{prop:spurious} Let $\mu_n$, $\nu_n$ be i.i.d empirical distributions associated to $\mu$ and $\nu$. and  let $T_n(\theta)=W_1((h_\theta)_\#\mu_n,\nu_n).$ Then
$\partial T_n(0)$ converges in law to $
\overline{\operatorname{conv}}\{1-w,\;1-w+wU\},$ $
U\sim\operatorname{Unif}(-1,1).$
In particular, $\mathbb P\bigl(0\in\partial \T_n(0)\bigr)
\longrightarrow
1 - \frac{1}{2w}>0.$
\end{proposition}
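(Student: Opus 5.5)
The plan is to compute $\partial T_n(0)$ explicitly from the one-dimensional sorting formula. I will show that, almost surely and for $n$ large, it is the segment between a left derivative equal to $N^\mu_n/n$ and a right derivative equal to $N^\mu_n/n$ plus a rescaled sign sum. Both limits then come from the law of large numbers and the Donsker argument already used for the ReLU example.

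\emph{Step 1 (structure of the empirical objective near $0$).} Let $N^\mu_n$ and $N^\nu_n$ be the numbers of samples falling on the atoms $M$ and $M-2$. Then $N^\mu_n,N^\nu_n\sim\operatorname{Bin}(n,1-w)$ are independent, and $N^\mu_n/n, N^\nu_n/n\to 1-w$ almost surely. Since $h(\theta,\cdot)$ is nondecreasing for $|\theta|<1$ and maps $[0,1]$ below the atom, the monotone matching $T_n(\theta)=\frac1n\sum_i|h(\theta,x_{(i)})-y_{(i)}|$ is realised by the same sorting permutation for all small $|\theta|$. In this matching the atom $M+\theta$ is paired with atoms $M-2$ or with uniform $y$'s, and the uniform $x$'s are paired with uniform $y$'s. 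The exception is that when $N^\mu_n<N^\nu_n$ the top $N^\nu_n-N^\mu_n$ uniform $x$'s are paired with $M-2$.

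Almost surely there are no ties $x_{(i)}=y_{(i)}$. On that event $T_n$ is piecewise linear on a neighbourhood of $0$ with a single kink at $0$, so
\[
\partial T_n(0)=\operatorname{conv}\{T'_{n,-}(0),T'_{n,+}(0)\}.
\]
Atom-$x$ pairs contribute derivative $+1$ on both sides, because $M+\theta>y$. Uniform-$x$ pairs contribute $0$ on the left, since $\operatorname{relu}$ is flat there, and $\operatorname{sgn}(x_{(i)}-y_{(i)})$ on the right. This gives
\[
T'_{n,-}(0)=\frac{N^\mu_n}{n}\to 1-w \quad\text{almost surely},
\]
\[
T'_{n,+}(0)=\frac{N^\mu_n}{n}+\frac1n\sum_{i\le n-N^\mu_n}\operatorname{sgn}(x_{(i)}-y_{(i)}).
\]
In the second formula the terms pairing uniform $x$'s with the atom $M-2$ number $O_P(\sqrt n)$, so they contribute $o_P(1)$ and can be discarded.

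\emph{Step 2 (limit of the sign sum).} For levels $s<w$, the $\lceil ns\rceil$-th order statistics of $\mu_n$ and $\nu_n$ are the empirical quantiles of the mixtures restricted to $[0,1]$, where the population quantile is $s/w$ for both. By Donsker's theorem for the two empirical CDFs, uniformly on compact subsets of $[0,w)$,
\[
\sqrt n\,\bigl(x_{(\lceil ns\rceil)}-y_{(\lceil ns\rceil)}\bigr)\ \text{converges in law to}\ \tfrac1w\bigl(B_2(s)-B_1(s)\bigr),
\]
with $B_1,B_2$ independent Brownian bridges. The zero set of the limit has Lebesgue measure zero almost surely. As in the ReLU proposition, the continuous mapping theorem then gives
\[
\frac1n\sum_{i\le n-N^\mu_n}\operatorname{sgn}(x_{(i)}-y_{(i)})\ \text{converges in law to}\ \int_0^w\operatorname{sgn}(B(s))\,ds,
\]
where $B$ is a rescaled difference of the bridges. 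I will then identify this occupation integral as $wU$ with $U\sim\operatorname{Unif}(-1,1)$ via Lévy's uniform law, as in the previous proof. This yields convergence in law of the segment to $\overline{\operatorname{conv}}\{1-w,1-w+wU\}$, using that the map sending two endpoints to their segment is continuous for the Hausdorff distance.

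\emph{Step 3 (the probability).} The limit segment contains $0$ iff $1-w+wU\le0$, that is iff $U\le -(1-w)/w$. This has probability $\tfrac12\bigl(1-\tfrac{1-w}{w}\bigr)=1-\tfrac1{2w}$. The event $\{0\in\partial T_n(0)\}$ equals $\{T'_{n,+}(0)\le 0\}$, and the limit law of $T'_{n,+}(0)$ has no atom at $0$. Portmanteau therefore gives $\mathbb P(0\in\partial T_n(0))\to 1-\frac1{2w}$.

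The main obstacle is Step 2. The uniform blocks of $\mu_n$ and $\nu_n$ have random sizes $n-N^\mu_n$ and $n-N^\nu_n$ differing by order $\sqrt n$. This shifts the matched quantile levels by order $n^{-1/2}$, the same order as the bridge fluctuations. One must check that the full-mixture Donsker limit, restricted to $[0,w]$, really has a sign-occupation law equal to that of a single bridge rescaled to $[0,w]$, so that Lévy's law applies. I would first try conditioning on the block sizes and time-changing each block to a standard uniform empirical process. If the count fluctuation does not wash out, I would instead state the limit as $1-w+\int_0^w\operatorname{sgn}(B(s))\,ds$ and compute $\mathbb P(0\in\cdot)$ from that law.
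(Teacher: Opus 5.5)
Your Steps 1 and 3 are correct and follow the paper's route: the left derivative is $N^\mu_n/n\to 1-w$, the right derivative is that plus an averaged sign sum, and the sign sum is handled by Donsker, continuous mapping and Lévy's law. The gap is the one you flag in Step 2, and it cannot be closed: the count fluctuation does not wash out. Let $\mathbb B$ be a standard Brownian bridge on $[0,1]$. Then $\int_0^w\operatorname{sgn}\mathbb B(s)\,ds$ is not distributed as $wU$ with $U\sim\operatorname{Unif}(-1,1)$ when $w<1$.

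Your limit $\frac1w(B_2-B_1)$ on $[0,w)$ is right. But a bridge on $[0,1]$ restricted to $[0,w]$ is not a bridge on $[0,w]$. Write $\mathbb B(ws)=[\mathbb B(ws)-s\mathbb B(w)]+s\mathbb B(w)$: the bracket is $\sqrt w$ times a standard bridge, independent of $\mathbb B(w)\sim N(0,w(1-w))$. So the limit is a bridge plus a random linear drift.

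Conditioning on the block sizes, as you propose, only exhibits this drift. The $i$-th order statistics of the uniform blocks, of sizes $n-N^\mu_n\neq n-N^\nu_n$, differ in mean by about $i\,|N^\mu_n-N^\nu_n|/m^2$. This is of order $n^{-1/2}$, the same order as the bridge fluctuations.

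To see that the law really changes, set $V_w:=\frac1w\int_0^w\operatorname{sgn}\mathbb B(s)\,ds$. The identity $\mathbb E[\operatorname{sgn}X\,\operatorname{sgn}Y]=\frac{2}{\pi}\arcsin\rho$ for a centered Gaussian pair with correlation $\rho$ gives
\[
\mathbb E[V_w^2]=\frac{2}{\pi}\int_0^1\!\!\int_0^1\arcsin\rho_w(s,t)\,ds\,dt,
\qquad
\rho_w(s,t)=\sqrt{\frac{s(1-wt)}{t(1-ws)}}\quad(s\le t).
\]
For $0<s<t<1$, $\rho_w(s,t)$ is strictly decreasing in $w$. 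The value at $w=1$ is $\frac13=\mathbb E[U^2]$ (Lévy), and at $w=0$ it is $\frac12$ (arcsine law). Hence $\mathbb E[V_w^2]>\frac13$ for every $w<1$, so $V_w$ is not uniform.

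This is not a weakness of your route relative to the paper. The paper's proof conditions on $N_X,N_Y$ and asserts $D_m\to U$ ``as in the previous example'', although the two uniform blocks have sizes differing by order $\sqrt n$. So for i.i.d.\ sampling, the quantitative claims of the statement (uniform $U$ and limit probability $1-\frac1{2w}$) are not correct, and no argument will establish them.

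What is true is your fallback. $\partial T_n(0)$ converges in law to $\overline{\operatorname{conv}}\{1-w,\,1-w+\int_0^w\operatorname{sgn}\mathbb B(s)\,ds\}$, and $\mathbb P(0\in\partial T_n(0))\to\mathbb P\bigl(\int_0^w\operatorname{sgn}\mathbb B(s)\,ds\le-(1-w)\bigr)$. This limit is still strictly positive for $w>\frac12$. Indeed, $\mathbb B$ stays negative on $[\delta,w]$ with positive probability, and then the integral is at most $2\delta-w\le-(1-w)$ once $\delta\le w-\frac12$. So the spurious-criticality message survives, but the value is not $1-\frac1{2w}$.

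The stated formula would hold if the block sizes were forced equal, for instance by stratified sampling with $N_X=N_Y$. Your Step 2 then reduces exactly to the ReLU example.
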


\begin{proof}
Let $x_1,\ldots,x_n \sim \mu$ and $y_1, \ldots y_n \sim \nu$ be the i.i.d samples and define
\[
N_X=\#\{i:x_i\in[0,1]\},
\qquad
N_Y=\#\{i:y_i\in[0,1]\}.
\]
Then \(N_X,N_Y\) are independent binomial variables of parameters $(n,w)$. Hence
\(N_X/n\to w\) and \(N_Y/n\to w\) almost surely by law of large numbers. Thus $\frac{|N_X-N_Y|}{n} \to 0$ almost surely. Set \(m_n=\min(N_X,N_Y)\). Since the continuous block and the atom are separated,
monotone one-dimensional transport matches the common continuous mass with the
continuous mass and the atom mass with the atom mass. Only the excess
\(|N_X-N_Y|\) points can be matched across blocks, and this has total mass
\(|N_X-N_Y|/n\). Since \(|h'_+(0,\cdot)|\le1\) on the support of
\(\mu\), this cross-block part contributes only  to the one-sided
derivatives. For the left derivative, only the Dirac components contribute and it writes
\[
\T'_{n,-}(0)=\frac{n-N_X}{n}+\varepsilon_n \to 1 - w \qquad \text{almost surely,}
\]
where by the previous argument $\varepsilon_n \to 0$ almost surely. Let \(x^0_{(1)}\le\cdots\le x^0_{(N_X)}\) and
\(y^0_{(1)}\le\cdots\le y^0_{(N_Y)}\) be the ordered  samples falling in the continuous components. Then the right derivative writes
\[
\T'_{n,+}(0)
=
\frac{n-N_X}{n}
+
\frac1n\sum_{i=1}^{m_n}
\operatorname{sgn}(x^0_{(i)}-y^0_{(i)})
+ \zeta_n
\]
where $\zeta_n \to 0$ almost surely. The first term converges to $1 - w$. It remains to identify the averaged sign term. Conditionally on \(N_X,N_Y\), the continuous-block samples are independent iid uniform samples, hence as in the previous example,
\[
D_m:=\frac1m\sum_{i=1}^m \operatorname{sgn}(x_{(i)}^{0}-y_{(i)}^{0})
\xrightarrow[m \to \infty]{} U,
\qquad
U\sim\operatorname{Unif}(-1,1).
\]
Since \(m_n/n\to w\) almost surely, we have the weak convergence
\[
\frac1n\sum_{i=1}^{m_n}
\operatorname{sgn}(x^0_{(i)}-y^0_{(i)})
=
\frac{m_n}{n}D_{m_n}
\rightarrow wU.
\]
This finally gives
\[
\T'_{n,-}(0)\to 1-w,
\qquad
\T'_{n,+}(0)\rightarrow 1-w+wU.
\]
where $U\sim\operatorname{Unif}(-1,1)$. Finally, \(0\in\overline{\operatorname{conv}}\{1-w,1-w+wU\}\) if and only if
\(1-w+wU\le0\). Thus, when $w \in (1/2, 1)$, $-1 < 1 - 1/w < 0$ and  we have
\[
\mathbb P(1-w+wU\le0)
=
\mathbb P\left(U\le 1 - \frac{1}{w}\right)
=1 - \frac{1}{2w}.
\]
This proves the claim.
\end{proof}

\begin{wrapfigure}{r}{0.42\textwidth}
    \centering
    \vspace{-0.5em}
    \includegraphics[width=0.40\textwidth]{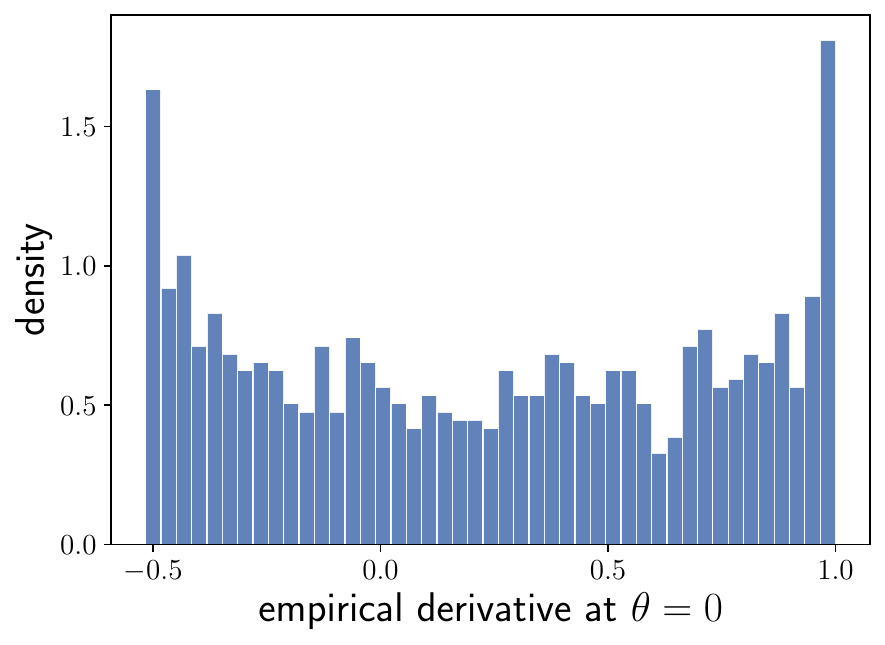}
    \caption{\small{The population objective is increasing, with subgradients in $[\frac 14, 1]$ while the range of empirical derivatives contains zero.
    }}
    \label{fig:nonsmooth-empirical-derivatives}
    \vspace{-1em}
\end{wrapfigure}
\paragraph{Experiment.} For the numerical illustrations, we take $w = 3/4$ and $M = 6$.

In \Cref{fig:spurious-local-minimum}, we plot the population objective together
with several realizations of the empirical objective \(\T_n\), for \(n=10\,000\) (grey curves).
Although the population objective is increasing near \(\theta=0\), some empirical
realizations decrease to the right of \(0\) and develop a local minimum.
This illustrates how the limiting empirical first-order behavior can create spurious
criticality even when the population Clarke subdifferential is bounded away from zero.

In \Cref{fig:nonsmooth-empirical-derivatives}, we repeat the experiment \(1000\) times
and display the empirical distribution of the right derivative at \(\theta=0\). For each
realization, \(\T_n\) is computed by the one-dimensional sorting formula, and the
derivative is evaluated by automatic differentiation. The observed values uniformly range
approximately from \(-0.5\) to \(1\), in agreement
with the limiting random segment described in \Cref{prop:spurious}.

\section{Conclusion}

This work studied parameterized empirical transport costs from the joint perspective of statistical approximation and nonsmooth optimization. We established graphical convergence of subdifferentials for a broad class of nonsmooth objectives generated by differentiable transport costs. These results provide a framework for understanding when first-order information computed from sampled transport losses is consistent with the corresponding population objective.

Our examples show that the smoothness of the parameterization plays an important role in obtaining favorable statistical and optimization guarantees. In contrast, general nonsmooth costs may exhibit unstable empirical derivative behavior in the large-sample limit. This may indicate that nonsmooth transport losses require a more refined first-order description than direct convergence of classical subdifferentials. Objective-decoupled gradient models~\cite{norkin1980,bolte2019conservative,schechtman2024gradient} provide a natural language for such limiting objects, although the distributional nature of transport objectives makes their direct application challenging. A complementary direction is to understand whether mini-batching and smoothing-by-sampling mechanisms that are used in practice can restore stable first-order behavior~\cite{sampling_fatras2021minibatch,sliced_tanguy2025properties}.
\bibliographystyle{siam}
\bibliography{references}

\begin{thebibliography}{10}

\bibitem{infinite}
{\sc C.~Aliprantis and K.~Border}, {\em Infinite Dimensional Analysis},
  Springer Berlin, Heidelberg, 2006.

\bibitem{ambrosio2005gradient}
{\sc L.~Ambrosio, N.~Gigli, and G.~Savar{\'e}}, {\em Gradient flows: in metric
  spaces and in the space of probability measures}, Springer, 2005.

\bibitem{arjovsky2017wasserstein}
{\sc M.~Arjovsky, S.~Chintala, and L.~Bottou}, {\em Wasserstein generative
  adversarial networks}, in International conference on machine learning, Pmlr,
  2017, pp.~214--223.

\bibitem{graphical_artstein1975strong}
{\sc Z.~Artstein and R.~A. Vitale}, {\em A strong law of large numbers for
  random compact sets}, The Annals of Probability,  (1975), pp.~879--882.

\bibitem{graphical_attouch2006convergence}
{\sc H.~Attouch}, {\em Convergence de fonctionnelles convexes}, in Journ\'{e}es
  d'Analyse Non Lin\'{e}aire: Proceedings, Besan\c{c}on, France, June 1977,
  Springer, 2006, pp.~1--40.

\bibitem{aubin1987graphical}
{\sc J.-P. Aubin}, {\em Graphical convergence of set-valued maps},  (1987).

\bibitem{benaim2012}
{\sc M.~Bena{\"i}m, J.~Hofbauer, and S.~Sorin}, {\em Perturbations of
  set-valued dynamical systems, with applications to game theory}, Dynamic
  Games and Applications, 2 (2012), pp.~195--205.

\bibitem{billingsley2013convergence}
{\sc P.~Billingsley}, {\em Convergence of probability measures}, John Wiley \&
  Sons, 2013.

\bibitem{bolte2019conservative}
{\sc J.~Bolte and E.~Pauwels}, {\em Conservative set valued fields, automatic
  differentiation, stochastic gradient methods and deep learning}, Mathematical
  Programming, 188 (2021), pp.~19--51.

\bibitem{risk_bonalli2025characterization}
{\sc R.~Bonalli, B.~Bonnet-Weill, and L.~Pfeiffer}, {\em A characterization of
  law-invariant and coherent risk measures through optimal transport}, arXiv
  preprint arXiv:2512.19157,  (2025).

\bibitem{carlier2017convergence}
{\sc G.~Carlier, V.~Duval, G.~Peyr{\'e}, and B.~Schmitzer}, {\em Convergence of
  entropic schemes for optimal transport and gradient flows}, SIAM Journal on
  Mathematical Analysis, 49 (2017), pp.~1385--1418.

\bibitem{sliced_chapel2026differentiable}
{\sc L.~Chapel, R.~Tavenard, and S.~Vaiter}, {\em Differentiable generalized
  sliced wasserstein plans}, Advances in Neural Information Processing Systems,
  38 (2026), pp.~162905--162929.

\bibitem{clarke1990optimization}
{\sc F.~Clarke}, {\em Optimization and Nonsmooth Analysis}, Classics in Applied
  Mathematics, Society for Industrial and Applied Mathematics, 1990.

\bibitem{clarke1975generalized}
{\sc F.~H. Clarke}, {\em Generalized gradients and applications}, Transactions
  of the American Mathematical Society, 205 (1975), pp.~247--262.

\bibitem{sinkhorn_cuturi2014fast}
{\sc M.~Cuturi and A.~Doucet}, {\em Fast computation of wasserstein
  barycenters}, in International conference on machine learning, PMLR, 2014,
  pp.~685--693.

\bibitem{cuturi2022optimal}
{\sc M.~Cuturi, L.~Meng-Papaxanthos, Y.~Tian, C.~Bunne, G.~Davis, and
  O.~Teboul}, {\em Optimal transport tools (ott): A jax toolbox for all things
  wasserstein}, arXiv preprint arXiv:2201.12324,  (2022).

\bibitem{sinkhorn_NEURIPS2019_d8c24ca8}
{\sc M.~Cuturi, O.~Teboul, and J.-P. Vert}, {\em Differentiable ranking and
  sorting using optimal transport}, in Advances in Neural Information
  Processing Systems, H.~Wallach, H.~Larochelle, A.~Beygelzimer,
  F.~d\textquotesingle Alch\'{e}-Buc, E.~Fox, and R.~Garnett, eds., vol.~32,
  Curran Associates, Inc., 2019.

\bibitem{villani}
{\sc V.~Cédric}, {\em Optimal transport : old and new / Cédric Villani},
  Grundlehren der mathematischen Wissenschaften, Springer, Berlin, 2009.

\bibitem{danskin2012theory}
{\sc J.~M. Danskin}, {\em The theory of max-min and its application to weapons
  allocation problems}, Springer Science \& Business Media, 2012.

\bibitem{Davis2020}
{\sc D.~Davis, D.~Drusvyatskiy, S.~Kakade, and J.~D. Lee}, {\em Stochastic
  subgradient method converges on tame functions}, Foundations of Computational
  Mathematics, 20 (2020), pp.~119--154.

\bibitem{dellacherie2011probabilities}
{\sc C.~Dellacherie and P.-A. Meyer}, {\em Probabilities and potential, c:
  potential theory for discrete and continuous semigroups}, vol.~151, Elsevier,
  2011.

\bibitem{image_delon2004midway}
{\sc J.~Delon}, {\em Midway image equalization}, Journal of Mathematical
  Imaging and Vision, 21 (2004), pp.~119--134.

\bibitem{sliced_deshpande2018generative}
{\sc I.~Deshpande, Z.~Zhang, and A.~G. Schwing}, {\em Generative modeling using
  the sliced wasserstein distance}, in Proceedings of the IEEE conference on
  computer vision and pattern recognition, 2018, pp.~3483--3491.

\bibitem{dudley2018real}
{\sc R.~M. Dudley}, {\em Real analysis and probability}, Chapman and Hall/CRC,
  2018.

\bibitem{dumont2025existence}
{\sc T.~Dumont, T.~Lacombe, and F.-X. Vialard}, {\em On the existence of monge
  maps for the gromov--wasserstein problem}, Foundations of Computational
  Mathematics, 25 (2025), pp.~463--510.

\bibitem{durrett2010probability}
{\sc R.~Durrett}, {\em Probability: Theory and Examples}, Cambridge Series in
  Statistical and Probabilistic Mathematics, Cambridge University Press, 2010.

\bibitem{fairness_dwork2012fairness}
{\sc C.~Dwork, M.~Hardt, T.~Pitassi, O.~Reingold, and R.~Zemel}, {\em Fairness
  through awareness}, in Proceedings of the 3rd innovations in theoretical
  computer science conference, 2012, pp.~214--226.

\bibitem{sampling_fatras2021minibatch}
{\sc K.~Fatras, Y.~Zine, S.~Majewski, R.~Flamary, R.~Gribonval, and N.~Courty},
  {\em Minibatch optimal transport distances; analysis and applications}, arXiv
  preprint arXiv:2101.01792,  (2021).

\bibitem{fairness_feldman2015certifying}
{\sc M.~Feldman, S.~A. Friedler, J.~Moeller, C.~Scheidegger, and
  S.~Venkatasubramanian}, {\em Certifying and removing disparate impact}, in
  proceedings of the 21th ACM SIGKDD international conference on knowledge
  discovery and data mining, 2015, pp.~259--268.

\bibitem{flamary2021pot}
{\sc R.~Flamary, N.~Courty, A.~Gramfort, M.~Z. Alaya, A.~Boisbunon, S.~Chambon,
  L.~Chapel, A.~Corenflos, K.~Fatras, N.~Fournier, L.~Gautheron, N.~T. Gayraud,
  H.~Janati, A.~Rakotomamonjy, I.~Redko, A.~Rolet, A.~Schutz, V.~Seguy, D.~J.
  Sutherland, R.~Tavenard, A.~Tong, and T.~Vayer}, {\em Pot: Python optimal
  transport}, Journal of Machine Learning Research, 22 (2021), pp.~1--8.

\bibitem{follmer2011stochastic}
{\sc H.~F{\"o}llmer and A.~Schied}, {\em Stochastic finance: an introduction in
  discrete time}, Walter de Gruyter, 2011.

\bibitem{fournier2015rate}
{\sc N.~Fournier and A.~Guillin}, {\em On the rate of convergence in
  wasserstein distance of the empirical measure}, Probability theory and
  related fields, 162 (2015), pp.~707--738.

\bibitem{gao2023wassersteindistance}
{\sc R.~Gao and A.~Kleywegt}, {\em Distributionally robust stochastic
  optimization with wasserstein distance}, Math. Oper. Res., 48 (2023),
  pp.~603--655.

\bibitem{ghossoub2021continuity}
{\sc M.~Ghossoub and D.~Saunders}, {\em On the continuity of the feasible set
  mapping in optimal transport}, Economic Theory Bulletin, 9 (2021),
  pp.~113--117.

\bibitem{generative_gulrajani2017improved}
{\sc I.~Gulrajani, F.~Ahmed, M.~Arjovsky, V.~Dumoulin, and A.~C. Courville},
  {\em Improved training of wasserstein gans}, Advances in neural information
  processing systems, 30 (2017).

\bibitem{nonsmooth_houdard2023gradient}
{\sc A.~Houdard, A.~Leclaire, N.~Papadakis, and J.~Rabin}, {\em On the gradient
  formula for learning generative models with regularized optimal transport
  costs}, Transactions on Machine Learning Research,  (2023).

\bibitem{kuhn2019wasserstein}
{\sc D.~Kuhn, P.~M. Esfahani, V.~A. Nguyen, and S.~Shafieezadeh-Abadeh}, {\em
  Wasserstein distributionally robust optimization: Theory and applications in
  machine learning}, in Operations research \& management science in the age of
  analytics, Informs, 2019, pp.~130--166.

\bibitem{spectral_risk_laguel2022superquantile}
{\sc Y.~Laguel, J.~Malick, and Z.~Harchaoui}, {\em Superquantile-based
  learning: a direct approach using gradient-based optimization}, Journal of
  Signal Processing Systems, 94 (2022), pp.~161--177.

\bibitem{letrouit2024gluing}
{\sc C.~Letrouit and Q.~M{\'e}rigot}, {\em Gluing methods for quantitative
  stability of optimal transport maps}, arXiv preprint arXiv:2411.04908,
  (2024).

\bibitem{graphical_levy1995partial}
{\sc A.~B. Levy, R.~Poliquin, and L.~Thibault}, {\em Partial extensions of
  attouch's theorem with applications to proto-derivatives of subgradient
  mappings}, Transactions of the American Mathematical Society, 347 (1995),
  pp.~1269--1294.

\bibitem{levy1940certains}
{\sc P.~L{\'e}vy}, {\em Sur certains processus stochastiques homog{\`e}nes},
  Compositio mathematica, 7 (1940), pp.~283--339.

\bibitem{sliced_lobashev2026color}
{\sc A.~Lobashev, M.~Larchenko, and D.~Guskov}, {\em Color conditional
  generation with sliced wasserstein guidance}, Advances in Neural Information
  Processing Systems, 38 (2026), pp.~164572--164601.

\bibitem{risk_mehta2023stochastic}
{\sc R.~Mehta, V.~Roulet, K.~Pillutla, L.~Liu, and Z.~Harchaoui}, {\em
  Stochastic optimization for spectral risk measures}, in International
  Conference on Artificial Intelligence and Statistics, PMLR, 2023,
  pp.~10112--10159.

\bibitem{stability_merigot2020quantitative}
{\sc Q.~M{\'e}rigot, A.~Delalande, and F.~Chazal}, {\em Quantitative stability
  of optimal transport maps and linearization of the 2-wasserstein space}, in
  International Conference on Artificial Intelligence and Statistics, PMLR,
  2020, pp.~3186--3196.

\bibitem{sliced_nadjahi2021sliced}
{\sc K.~Nadjahi}, {\em Sliced-Wasserstein distance for large-scale machine
  learning: theory, methodology and extensions}, PhD thesis, Institut
  polytechnique de Paris, 2021.

\bibitem{sampling_nadjahi2020statistical}
{\sc K.~Nadjahi, A.~Durmus, L.~Chizat, S.~Kolouri, S.~Shahrampour, and
  U.~Simsekli}, {\em Statistical and topological properties of sliced
  probability divergences}, Advances in Neural Information Processing Systems,
  33 (2020), pp.~20802--20812.

\bibitem{nguyen2024sliced}
{\sc K.~Nguyen, S.~Zhang, T.~Le, and N.~Ho}, {\em Sliced wasserstein with
  random-path projecting directions}, in Proceedings of the 41st International
  Conference on Machine Learning, ICML'24, JMLR.org, 2024.

\bibitem{norkin1980}
{\sc V.~Norkin}, {\em Generalized-differentiable functions}, Cybernetics and
  Systems Analysis, 16 (1980), pp.~10--12.

\bibitem{NorWet13}
{\sc V.~I. Norkin and R.~J.-B. Wets}, {\em On a strong graphical law of large
  numbers for random semicontinuous mappings}, Vestnik S.-Petersburg
  University. Series 10. Applied Mathematics, Computer Science, Control
  Processes,  (2013), pp.~102--111.

\bibitem{sinkhorn_pauwels2023derivatives}
{\sc E.~Pauwels and S.~Vaiter}, {\em The derivatives of sinkhorn--knopp
  converge}, SIAM Journal on Optimization, 33 (2023), pp.~1494--1517.

\bibitem{peyre2019comput}
{\sc G.~Peyr\'{e} and M.~Cuturi}, {\em Computational optimal transport: With
  applications to data science}, Found. Trends Mach. Learn., 11 (2019),
  p.~355–607.

\bibitem{pillutla2024federated}
{\sc K.~Pillutla, Y.~Laguel, J.~Malick, and Z.~Harchaoui}, {\em Federated
  learning with superquantile aggregation for heterogeneous data}, Machine
  Learning, 113 (2024), pp.~2955--3022.

\bibitem{sliced_rabin2011wasserstein}
{\sc J.~Rabin, G.~Peyr{\'e}, J.~Delon, and M.~Bernot}, {\em Wasserstein
  barycenter and its application to texture mixing}, in International
  conference on scale space and variational methods in computer vision,
  Springer, 2011, pp.~435--446.

\bibitem{fairness_risser2022tackling}
{\sc L.~Risser, A.~G. Sanz, Q.~Vincenot, and J.-M. Loubes}, {\em Tackling
  algorithmic bias in neural-network classifiers using wasserstein-2
  regularization}, Journal of Mathematical Imaging and Vision, 64 (2022),
  pp.~672--689.

\bibitem{Rockafellar_1998}
{\sc R.~T. Rockafellar and R.~J.~B. Wets}, {\em Variational Analysis}, Springer
  Berlin Heidelberg, 1998.

\bibitem{sliced_rodriguez2025learning}
{\sc D.~Rodr{\'\i}guez-V{\'\i}tores, C.~Lalanne, and J.-M. Loubes}, {\em
  Learning with differentially private (sliced) wasserstein gradients}, arXiv
  preprint arXiv:2502.01701,  (2025).

\bibitem{fairness_rychener2022metrizing}
{\sc Y.~Rychener, B.~Taskesen, and D.~Kuhn}, {\em Metrizing fairness}, arXiv
  preprint arXiv:2205.15049,  (2022).

\bibitem{graphical_salim2023strong}
{\sc A.~Salim}, {\em A strong law of large numbers for random monotone
  operators}, Set-Valued and Variational Analysis, 31 (2023), p.~38.

\bibitem{Santambrogio2015}
{\sc F.~Santambrogio}, {\em Optimal Transport for Applied Mathematicians},
  Progress in Nonlinear Differential Equations and Their Applications,
  Birkh{\"a}user Cham, 1~ed., 2015.

\bibitem{schechtman2024gradient}
{\sc S.~Schechtman}, {\em The gradient’s limit of a definable family of
  functions admits a variational stratification}, SIAM Journal on Optimization,
   (2026).

\bibitem{sebbouh2022randomized}
{\sc O.~Sebbouh, M.~Cuturi, and G.~Peyr{\'e}}, {\em Randomized stochastic
  gradient descent ascent}, in International Conference on Artificial
  Intelligence and Statistics, PMLR, 2022, pp.~2941--2969.

\bibitem{shapiro}
{\sc A.~Shapiro and H.~Xu}, {\em Uniform laws of large numbers for set-valued
  mappings and subdifferentials of random functions}, Journal of Mathematical
  Analysis and Applications, 325 (2007), pp.~1390--1399.

\bibitem{lift_tanguy2025sliced}
{\sc E.~Tanguy, L.~Chapel, and J.~Delon}, {\em Sliced optimal transport plans},
  arXiv preprint arXiv:2508.01243,  (2025).

\bibitem{sliced_tanguy2025properties}
{\sc E.~Tanguy, R.~Flamary, and J.~Delon}, {\em Properties of discrete sliced
  wasserstein losses}, Mathematics of Computation, 94 (2025), pp.~1411--1465.

\bibitem{sliced_vauthier2025towards}
{\sc C.~Vauthier, A.~Korba, and Q.~M{\'e}rigot}, {\em Towards understanding
  gradient dynamics of the sliced-wasserstein distance via critical point
  analysis}, arXiv preprint arXiv:2502.06525,  (2025).

\bibitem{wang2023sinkhorn}
{\sc J.~Wang, R.~Gao, and Y.~Xie}, {\em Sinkhorn distributionally robust
  optimization}, 2023.

\bibitem{spectral_risk_xiao2023unified}
{\sc R.~Xiao, Y.~Ge, R.~Jiang, and Y.~Yan}, {\em A unified framework for
  rank-based loss minimization}, Advances in Neural Information Processing
  Systems, 36 (2023), pp.~51302--51326.

\bibitem{zolezzi1994convergence}
{\sc T.~Zolezzi}, {\em Convergence of generalized gradients}, Set-Valued
  Analysis, 2 (1994), pp.~381--393.

\end{thebibliography}
\end{document}